\numberwithin{equation}{section}
\newtheorem{theoremcounter}{theoremcounter}[section]
\newtheorem{algorithm}[theoremcounter]{Algorithm}
\newtheorem{corollary}[theoremcounter]{Corollary}
\newtheorem{definition}[theoremcounter]{Definition}
\newtheorem{example}[theoremcounter]{Example}
\newtheorem{lemma}[theoremcounter]{Lemma}
\newtheorem{proposition}[theoremcounter]{Proposition}
\newtheorem{remark}[theoremcounter]{Remark}
\newtheorem{remarks}[theoremcounter]{Remarks}
\newtheorem{theorem}[theoremcounter]{Theorem}
\newcommand{\tit}{\itshape}
\newcommand{\nbd}{\nobreakdash-\hspace{0pt}}
\renewcommand{\frak}{\ensuremath{\mathfrak}}
\newcommand{\cal}{\ensuremath{\mathcal}}
\newcommand{\frakc}{\ensuremath{\frak{c}}}
\newcommand{\frake}{\ensuremath{\frak{e}}}
\newcommand{\cC}{\ensuremath{\cal{C}}}
\newcommand{\cD}{\ensuremath{\cal{D}}}
\newcommand{\cE}{\ensuremath{\cal{E}}}
\newcommand{\cF}{\ensuremath{\cal{F}}}
\newcommand{\cH}{\ensuremath{\cal{H}}}
\newcommand{\cI}{\ensuremath{\cal{I}}}
\newcommand{\cJ}{\ensuremath{\cal{J}}}
\newcommand{\cL}{\ensuremath{\cal{L}}}
\newcommand{\cR}{\ensuremath{\cal{R}}}
\newcommand{\cS}{\ensuremath{\cal{S}}}
\newcommand{\cU}{\ensuremath{\cal{U}}}
\newcommand{\rmG}{\ensuremath{\mathrm{G}}}
\newcommand{\rmJ}{\ensuremath{\mathrm{J}}}
\newcommand{\rmM}{\ensuremath{\mathrm{M}}}
\newcommand{\rmd}{\ensuremath{\mathrm{d}}}
\newcommand{\amid}{\ensuremath{\mathop{\mid}}}
\newcommand{\ZZ}{\ensuremath{\mathbb{Z}}}
\newcommand{\QQ}{\ensuremath{\mathbb{Q}}}
\newcommand{\RR}{\ensuremath{\mathbb{R}}}
\newcommand{\CC}{\ensuremath{\mathbb{C}}}
\newcommand{\isdiv}{\amid}
\newcommand{\nisdiv}{\ensuremath{\mathop{\nmid}}}
\newcommand{\Mat}[2]{\ensuremath{\mathrm{M}_{#1}(#2)}}
\newcommand{\GL}[1]{\ensuremath{\mathrm{GL}_{#1}}}
\newcommand{\SL}[1]{\ensuremath{\mathrm{SL}_{#1}}}
\newcommand{\Sp}[1]{\ensuremath{\mathrm{Sp}_{#1}}}
\newcommand{\GSp}[1]{\ensuremath{\mathrm{GSp}_{#1}}}
\newcommand{\Orth}[1]{\ensuremath{\mathrm{O}_{#1}}}
\newcommand{\tr}{\ensuremath{\mathrm{tr}}}
\newcommand{\diag}{\ensuremath{\mathrm{diag}}}
\newcommand{\slashdiv}{\ensuremath{\mathop{/}}}
\newcommand{\lspan}{\ensuremath{\mathop{\mathrm{span}}}}
\newcommand{\HS}{\mathbb{H}}
\newcommand{\td}{\tilde}
\newcommand{\ov}{\overline}
\newcommand{\Mp}[1]{\ensuremath{\mathrm{Mp}_{#1}}}
\newcommand{\disc}{\ensuremath{\mathrm{disc}}}
\renewcommand{\pmod}[1]{\ensuremath{\;(\mathrm{mod}\, #1)}}
\newcommand{\eqPic}{\mathop{\cong_\mathrm{Pic}}\,}
\begin{document}

\title{Computing Genus $1$ Jacobi Forms}

\author{Martin Raum}
\address{ETH, Dept. Mathematics, Rämistraße 101, CH-8092, Zürich, Switzerland}
\email{martin.raum@math.ethz.ch}
\urladdr{http://www.raum-brothers.eu/martin/}
\thanks{The author is supported by the ETH Zurich Postdoctoral Fellowship Program and by the Marie Curie Actions for People COFUND Program.}

\subjclass[2010]{Primary 11F30, 11G18; Secondary 11F50, 11F27}
\keywords{Fourier expansions of vector valued modular forms, special divisors, Hecke operators}

\begin{abstract}
We develop an algorithm to compute Fourier expansions of vector valued modular forms for Weil representations.  As an application, we compute explicit linear equivalences of special divisors on modular varieties of orthogonal type.  We define three families of Hecke operators for Jacobi forms, and analyze the induced action on vector valued modular forms.  The newspaces attached to one of these families are used to give a more memory efficient version of our algorithm.
\end{abstract}

\maketitle


\section{Introduction}
\label{sec:introduction}

Let $\cL$ be a lattice of signature $(2, n)$.  One can define a modular variety~$X_{\cL}$ attached to $\cL$ (see Section~\ref{sec:specialdivisors}).  The Picard group ${\rm Pic}(X_\cL)$ of $X_\cL$ contains a subgroup ${\rm Pic}_{\rm sp}(X_\cL)$ that is spanned by so-called special divisors $Z(m, \mu)$.  In the past decade, we learned how to compute the rank of ${\rm Pic}_{\rm sp}(X_\cL)$ under certain assumptions on $\cL$.  Most recently, this problem was resolved by Bruinier~\cite{Br12} (see~\cite{Br02} for earlier results) in the case $n > 2$ assuming that $\cL$ splits a hyperbolic plane $\cU = \left(\begin{smallmatrix} 0 & 1 \\ 1 & 0 \end{smallmatrix}\right)$ and a rescaled copy $\cU(N)$ of $\cU$.  In this setting, the rank of ${\rm Pic}_{\rm sp}(X_\cL)$ essentially equals the dimension of a certain space of vector valued modular forms.  In this paper, we develop a method to compute relations of special divisors in ${\rm Pic}_{\rm sp}(X_\cL)$.
\begin{theorem}
\label{thm:algorithm_existence}
There exists an algorithm, which, for any $\cL = \cU \oplus \cU(N) \oplus \cL'(-1)$ with positive definite $\cL'$, computes linear equivalences between any finite set of special divisors $Z(m, \mu)$.
\end{theorem}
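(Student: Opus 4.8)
The plan is to translate the geometric statement into a finite linear-algebra problem over spaces of vector valued modular forms, using Borcherds' singular theta correspondence together with Bruinier's converse theorem~\cite{Br12}, and then to feed that problem to the algorithm for Fourier expansions developed in the preceding sections.

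Set $n := \rk\cL' + 2$, so that $\cL$ has signature $(2, n)$; let $\rho$ be the Weil representation of $\Mp{2}(\ZZ)$ attached to $\cL$ and $\overline{\rho}$ its dual. To each weakly holomorphic modular form $f = \sum_{m, \mu} c_f(m, \mu)\, q^m\, \mathfrak e_\mu$ of weight $1 - n/2$ and type $\rho$ with $c_f(m, \mu) \in \ZZ$ for $m \le 0$, Borcherds' construction of automorphic products attaches a meromorphic modular form $\Psi_f$ on $X_\cL$, a section of an explicit power of the automorphic line bundle determined by $c_f(0, 0)$, whose divisor is a fixed rational multiple of $\sum_{m > 0,\, \mu} c_f(m, \mu)\, Z(m, \mu)$. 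Passing to divisor classes,
\begin{equation*}
\sum_{m > 0,\, \mu} c_f(m, \mu)\, Z(m, \mu) \;=\; c_f(0, 0)\, \mathcal K \qquad\text{in } \mathrm{Pic}(X_\cL) \otimes \QQ,
\end{equation*}
where $\mathcal K$ is a suitably normalized class of the automorphic line bundle and, by convention, $Z(0, 0)$ is the corresponding multiple of $-\mathcal K$. The hypothesis $\cL = \cU \oplus \cU(N) \oplus \cL'(-1)$ is exactly what is needed for Bruinier's converse theorem to apply, and it gives: every $\QQ$\nbd linear relation among the classes $Z(m, \mu)$, $m \ge 0$, in $\mathrm{Pic}_{\mathrm{sp}}(X_\cL) \otimes \QQ$ is of this form. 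Equivalently, the coefficient pairing $\big\langle (a_{m, \mu})_{m, \mu},\, g \big\rangle = \sum_{m, \mu} a_{m, \mu}\, c_g(m, \mu)$ between formal $\QQ$\nbd combinations of the $Z(m, \mu)$ and the finite-dimensional space $M_{1 + n/2}(\overline{\rho})$ of holomorphic vector valued modular forms of dual type, whose elements we write $g = \sum_{m, \mu} c_g(m, \mu)\, q^m\, \mathfrak e_\mu$, has left kernel exactly the relations in $\mathrm{Pic}_{\mathrm{sp}}(X_\cL) \otimes \QQ$ (with the coefficient $c_g(0, 0)$ pairing against the class $Z(0,0)$ when the latter is present); if the given list involves only $m > 0$, one may instead pair with the subspace of cusp forms and obtain relations modulo $\QQ \cdot \mathcal K$.

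Granting this dictionary, the algorithm is the following. Given $Z(m_1, \mu_1), \dots, Z(m_k, \mu_k)$, put $M_0 := \max_i m_i$. Using the algorithm of the preceding sections, together with the a priori dimension of $M_{1 + n/2}(\overline{\rho})$ to certify that nothing is missed, compute a basis $g_1, \dots, g_d$ of $M_{1 + n/2}(\overline{\rho})$ to Fourier precision $M_0$, that is, all coefficients $c_{g_j}(m, \mu)$ with $0 \le m \le M_0$. Form the $d \times k$ matrix $A = \big( c_{g_j}(m_i, \mu_i) \big)_{j, i}$ over $\QQ$ (or over the number field generated by the computed coefficients) and compute its right kernel in $\QQ^k$ by exact linear algebra; by the dictionary this kernel is precisely the space of $(a_i)_i$ for which $\sum_i a_i\, Z(m_i, \mu_i)$ is linearly equivalent to zero (to a multiple of $\mathcal K$, when $Z(0, 0)$ is not among the given classes). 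Output a basis of it. To return genuinely \emph{explicit} equivalences, for each kernel vector $(a_i)_i$ cleared of denominators one further recovers the weakly holomorphic form $f$ of weight $1 - n/2$ and type $\rho$ with principal part $\sum_i a_i\, q^{-m_i}\, \mathfrak e_{\mu_i}$: indeed $\Delta^{M_0} f$ is a holomorphic form in $M_{1 - n/2 + 12 M_0}(\rho)$ whose Fourier coefficients in degrees $0, \dots, M_0 - 1$ are explicit linear combinations of the $a_i$, hence $\Delta^{M_0} f$ is pinned down inside that finite-dimensional space — again computed by the algorithm of the preceding sections — and formal division by $\Delta^{M_0}$ returns $f$, so that $\Psi_f$ is an automorphic form realizing the equivalence.

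Termination and exactness are the routine points: every step is a finite computation carried out over $\QQ$ or a number field. The main obstacle is correctness, and it has two faces. Geometrically, one must verify that Bruinier's converse theorem genuinely covers every lattice of the stated shape, so that the right kernel of $A$ cannot fail to capture some relation; this is exactly where the splitting off of $\cU \oplus \cU(N)$ is indispensable, and is the reason for the hypothesis in the statement. Computationally, one must guarantee that the bases computed for $M_{1 + n/2}(\overline{\rho})$ and for $M_{1 - n/2 + 12 M_0}(\rho)$ are provably complete — that the Fourier-expansion algorithm of the preceding sections, combined with an a priori dimension count, really determines these spaces and not merely a subspace — and that the output lies in the claimed controlled arithmetic. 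Once both points are in place, the reduction above establishes the theorem.
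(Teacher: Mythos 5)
Your proposal is correct and follows essentially the same route as the paper: reduce via Borcherds' construction and Bruinier's converse theorem to computing, up to the finite precision $\max_i m_i$, a basis of $\rmM_{\frac{2+n}{2}}({\check \rho}_{\cL})$, and then read off the relations as the kernel of the resulting coefficient matrix (this is exactly relation~(\ref{eq:special_divisor_relation}) in Section~\ref{sec:specialdivisors}). The only step you leave implicit is that before the theta decomposition and Algorithm~\ref{alg:jacobiforms} can be applied one must first replace the indefinite lattice by a stably isomorphic positive definite one via Corollary~\ref{cor:positive_discriminant_forms}, which the paper cites explicitly.
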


For example, for $N = 1$ and $\cL' = (8)$, a lattice of rank~$1$, we have
\begin{gather*}
  Z(\tfrac{17}{16}, 3)
\eqPic
  2 \, Z(\tfrac{9}{16}, 1) - 2 \, Z(\tfrac{1}{16}, 3)
\text{.}
\end{gather*}
More such relations will be given in Section~\ref{sec:specialdivisors}.

\subsection{Special Picard groups and vector valued modular forms}
\label{ssec:introduction:specialpicardgroups}

The computation of Picard groups is intricate, and explicit determination of linear equivalences of divisors is even more difficult.  In the case of orthogonal varieties Borcherds provides a construction of meromorphic functions, called Borcherds products, whose divisors are explicit sums of special divisors~\cite{Bo98}.  If $n \ge 2$, there is hope that the converse of Borcherds result holds.  Every meromorphic modular form whose divisor is a linear combination of special divisors should be a Borcherds product.  As we have mentioned, in some cases this converse theorem is proved.

Borcherds construction works as follows.  Given a weakly holomorphic vector valued modular form of type $\rho_\cL$ (the Weil representation of the discriminant form associated to $\cL$) consider its Fourier expansion.
\begin{gather*}
  \sum_{\mu} \sum_{m \in \QQ} a(m, \mu) \exp(2 \pi i\, m \tau) \frake_\mu
\text{.}
\end{gather*}
Here the components are index by $\mu$ and the $\frake_\mu$ are certain basis elements for the representation space of $\rho_\cL$.  If $a(m, \mu) \in \ZZ$ for $m < 0$ and $a(0, 0) = 0$, then there is a meromorphic function on $X_\cL$ whose divisor equals
\begin{gather*}
  \sum_\mu \sum_{0 > m \in \QQ} a(m, \mu) Z(m, \mu)
\text{.}
\end{gather*}
In the light of the converse theorem, we are thus reduced to considering principal parts of weakly holomorphic modular forms, when studying ${\rm Pic}_{\rm sp}(X_\cL)$.

Serre duality can be used to determine which principal parts occur~\cite{Fi87, Bo00b}.  This brings into focus holomorphic vector valued modular forms of weight~$k = \frac{2 + n}{2}$ and type~${\check \rho}_{\cL} = \rho_{\cL(-1)}$ (the dual of $\rho_\cL$), the space of which we will denote by $\rmM_k({\check \rho}_\cL)$.  Since $\cL$ is indefinite these spaces are difficult to compute.  In the literature, one can find two methods which allow to determine Fourier expansions of elements of $\rmM_k({\check \rho}_\cL)$.  First of all, there are Eisenstein series, whose coefficients can be calculated efficiently~\cite{BK01}.  If $\cL$ splits two hyperbolic planes, similarly efficient calculations can be done for theta series~\cite{Si51}(\cite{Bo98} gives explanations in terms of vector valued theta series).  The drawback of relying on these constructions is that $\rmM_k({\check \rho}_\cL)$ is often not spanned by Eisenstein series and theta series.  A second method, which was brought to attention by Scheithauer~\cite{Sch04, Sch08}, is to average usual modular forms.  One can show that every vector valued modular form arises this way.  The drawback in this case is that one has to explicitly know Fourier expansions at all cusps of modular forms for groups of non-squarefree level.  This is an issue which is not easy to settle, and which, for example, in~\cite{Ra12} was dealt with by an adhoc approach.  In general, averaging does not seem well suited for large scale computations.  As a side node, we mention that there is a third method, that apparently is not treated in the literature.  One can use Eisenstein series and theta series for harmonic polynomials in order to construct elements of $M_{k + l 12}({\check \rho}_{\cL})$.  Suitable linear combinations will have vanishing order greater than or equal to $l$, and this vanishing order is sufficient to obtain elements of $\rmM_k ({\check \rho}_\cL)$ after dividing by $\Delta^l$, where $\Delta$ is the discriminant modular form.

Summarizing, we can say that averaging as described by Scheithauer is the only known systematic way to span $\rmM_k ({\check \rho}_\cL)$.  This method, however, relies on another difficult computational problem.  We therefore introduce Jacobi forms into this picture.  They will allow us to compute $\rmM_k ({\check \rho}_\cL)$ in a reliable, easy to implement way.

\subsection{Vector valued modular forms and Jacobi forms}

If $\cL$ was positive definite, for every $0 \ge k \in \frac{1}{2}\ZZ$ there would be a well-know link between $\rmM_{k} ({\check \rho}_\cL)$ and $\rmJ_{k + \frac{2 + n}{2}, \cL}$, the space of Jacobi forms of weight $k + \frac{2 + n}{2}$ and index~$\cL$ (see~\cite{Zi89} and Section~\ref{sec:modularforms} for definitions and details).  However, $\cL$ is never positive definite.  Section~\ref{sec:discriminantforms} contains the proof of the following theorem.
\begin{theorem}
\label{thm:positive_discriminant_forms}
There is an algorithm (Algorithm~\ref{alg:positive_discriminant_forms}) that, for any even lattice~$\cL$, computes an even, positive definite lattice~$\cL'$ satisfying $\cL \oplus \cU_1 \cong \cL' \oplus \cU_2$ for suitable unimodular lattices $\cU_1$ and $\cU_2$.
\end{theorem}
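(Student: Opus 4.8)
The plan is to reduce the statement to the classification of even lattices by their discriminant forms and then to make each step effective. Write $q_\cL$ for the discriminant form of $\cL$, the finite quadratic form on $\cL^\vee/\cL$ with values in $\QQ/2\ZZ$ induced by the form of $\cL$; it is computed from a Gram matrix of $\cL$ in the usual way, as the cokernel of the Gram matrix equipped with the form induced by the inverse Gram matrix modulo $\ZZ$, respectively $2\ZZ$. Three facts enter. First, $q_{M\oplus M'}\cong q_M\oplus q_{M'}$, and $q_U$ is trivial for an even unimodular lattice $U$. Second, by Milgram's formula the signature of an even lattice $M$ is congruent modulo $8$ to the signature $\sigma(q_M)\in\ZZ/8\ZZ$ of the finite quadratic form $q_M$. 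Third, two even lattices $M,M'$ satisfy $M\oplus U_1\cong M'\oplus U_2$ for suitable even unimodular $U_1,U_2$ if and only if $q_M\cong q_{M'}$; the nontrivial implication follows from the fact that the genus of an even lattice is pinned down by its signature together with its discriminant form, combined with Eichler's theorem that an indefinite even lattice of rank exceeding $\ell(q_M)$ by at least two is unique in its genus. Granting these, it suffices to construct, algorithmically, a positive definite even lattice $\cL'$ with $q_{\cL'}\cong q_\cL$.

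To construct $\cL'$, decompose $q_\cL$ into its $p$-primary parts and each $p$-primary part into Jordan constituents; concretely, compute a $p$-adic Jordan decomposition of a Gram matrix of $\cL$ for each prime $p$ dividing $\disc\cL$. Each Jordan constituent is a scaled unimodular finite quadratic form — for odd $p$ a form $p^k q^{\varepsilon}$, and for $p=2$ one of the forms customarily denoted $u_k$, $v_k$ or $2^k q^{\varepsilon}$ — and each of these is the discriminant form of an explicit positive definite even lattice of small rank, built from rescalings and orthogonal sums of the root lattices $A_n$, $D_n$, $E_n$ (for instance $\langle 2\rangle$, $A_2$, $D_4$, $E_7$, $E_8$ and their $p^k$-rescalings). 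Taking the orthogonal direct sum of one such block for every Jordan constituent yields an even lattice whose discriminant form is $\bigoplus_p (q_\cL)_p\cong q_\cL$ and which is positive definite because each block is; let $\cL'$ be this sum, possibly enlarged by copies of $E_8$. Its rank is then automatically congruent to $\sigma(q_\cL)$, hence to the signature of $\cL$, modulo $8$. This is exactly Nikulin's existence theorem for even lattices with prescribed signature and discriminant form, specialized to the positive definite case, and the construction just described is an effective form of its proof.

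Finally, we exhibit the unimodular lattices. Put $r=\rk\cL$, let $s$ be the signature of $\cL$, and put $r'=\rk\cL'$, which equals the signature of $\cL'$. Since $s\equiv\sigma(q_\cL)=\sigma(q_{\cL'})\equiv r'\pmod 8$, the number $(r'-s)/8$ is an integer; choose $b,c\ge0$ with $b-c=(r'-s)/8$. Since $r\equiv s\equiv r'\pmod 2$, the number $(r-r')/2$ is an integer; choose $a\ge1$ large and set $a'=a+(r-r')/2+4(b+c)$, so that $a'\ge1$ once $a$ is large enough. Put $\cU_1=\cU^{\oplus a}\oplus E_8^{\oplus b}\oplus E_8(-1)^{\oplus c}$ and $\cU_2=\cU^{\oplus a'}$; these are even unimodular. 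A direct count shows that $\cL\oplus\cU_1$ and $\cL'\oplus\cU_2$ have equal rank $r+2a+8(b+c)$ and equal signature $r'$, their discriminant forms are $q_\cL$ and $q_{\cL'}\cong q_\cL$, and for $a$ large they are indefinite of rank exceeding $\ell(q_\cL)$ by at least two. Hence they lie in one genus and, by Eichler's theorem, are isometric; an explicit isometry can be produced by the standard reduction algorithm for indefinite lattices.

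The main obstacle is the effective realization of the Jordan constituents of $q_\cL$ by explicit positive definite even lattices, especially at the prime $2$: the $2$-adic genus symbol carries the extra oddity and sign invariants, the constituents $u_k$ and $v_k$ behave unlike the diagonal ones, and one must ensure that each chosen building block is even, positive definite, has precisely the prescribed scaled unimodular form at $2$, and is unimodular at every other prime, so that the orthogonal direct sum has exactly $q_\cL$ as its discriminant form. Once a correct finite table of such building blocks is in place, the remaining ingredients — the $p$-adic Jordan decompositions, the elementary divisor computations producing $q_\cL$, and the concluding isometry of indefinite lattices — are routine.
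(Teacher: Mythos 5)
Your route is genuinely different from the paper's, and it is worth spelling out the contrast. The paper never touches genus theory: Algorithm~\ref{alg:positive_discriminant_forms} works with the lattice itself, repeatedly taking a shortest vector $v$ of negative length $-2n$ and a pair $w_1, w_2 \in E_8$ with $q_{E_8}(w_1)=q_{E_8}(w_2)=n$ and $\langle w_1, w_2\rangle_{E_8}=2n-1$, so that $v+w_1$ and $v+w_2$ span a hyperbolic plane inside $\cL\oplus E_8$; splitting off that plane removes one negative eigenvalue while preserving the stable isomorphism class, and after finitely many steps one has $\cL\oplus E_8^{\oplus t}\cong \cU^{\oplus t}\oplus\cL'$ with $\cL'$ positive definite. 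The only arithmetic input is Lemma~\ref{la:E8_represents_all_numbers}, that $E_8$ represents the binary form $\left(\begin{smallmatrix}2n & 2n-1\\ 2n-1 & 2n\end{smallmatrix}\right)$ for every $n\ge 1$, which the paper deduces from the positivity of the coefficients of $E_{4,1}$ via the Siegel Eisenstein series $E^{(2)}_4$. You instead pass to the discriminant form, build $\cL'$ from a table of positive definite blocks realizing the Jordan constituents, and recover the stable isometry from Milgram's formula together with uniqueness of indefinite even lattices in their genus; your third paragraph (the rank and signature bookkeeping and the appeal to Eichler--Nikulin uniqueness) is correct as stated. Your method, if completed, typically yields an $\cL'$ of much smaller rank and is closer to what lattice software actually does; the paper's method is elementary, avoids all local invariants, and comes with an explicit embedding of $\cL'$ into $\cL\oplus E_8^{\oplus t}$.

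As written, however, your proof has a hole exactly at its load-bearing step, and you acknowledge it: the finite table of positive definite even lattices realizing every Jordan constituent is never produced. This is not merely a bookkeeping omission. The blocks you name do not work as stated, since rescaling a root lattice by $p^k$ contaminates the discriminant form at other primes (for instance $A_1(p^k)=\langle 2p^k\rangle$ has discriminant group $\ZZ\slashdiv 2p^k\ZZ$ and so acquires $2$\nbd torsion); one must instead choose blocks prime by prime, e.g.\ positive definite binary even lattices of determinant $p^k$ at odd $p$, and a separate case analysis of $u_k$, $v_k$ and the odd constituents with their sign and oddity invariants at $p=2$. Such tables exist in the literature, so the approach can certainly be completed, but the proposal in its present form establishes a reduction to the effective form of Nikulin's existence theorem rather than proving the theorem. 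If you want a self-contained effective argument, the paper's $E_8$\nbd gluing is the shorter path.
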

Note that with $\cL$ and $\cL'$ as in the previous theorem, we have $\rho_{\cL} \cong \rho_{\cL'}$.  After applying the theorem we are therefore reduced to the case of vector valued modular forms associated to positive definite lattices.  For the rest of this section, \emph{we let $\cL$ be a positive definite, even lattice of rank~$N$}.

The theta decomposition for Jacobi forms, which is revisited in Section~\ref{sec:modularforms}, allows us to consider $\rmJ_{k, \cL}$, $k \ge \frac{N}{2}$ instead of vector valued modular forms.  Note that $\rmJ_{k, \cL}$ is zero except if $k$ is integral.  It is useful to distinguish two types of Jacobi forms.  Jacobi forms of scalar index, that is those whose Jacobi index is a lattices of rank~$1$, are by now classical.  They were studied systematically in~\cite{EZ85}, but first instances of such forms already appear in Jacobi's work.  In Skoruppa's thesis~\cite{Sk84}, an algorithm to compute such Jacobi forms was given.  Together with the newform theory, hinted at in~\cite{EZ85} and proved in~\cite{SZ88, SZ89}, this provides us with an efficient means to calculate Fourier expansions of Jacobi forms, even if their weight and index are large.

The case of lattices with $N > 1$ was, so far, not considered systematically.  Applications that have appeared in literature rely on coincidences.  For example, some spaces of Jacobi forms can be spanned by products of theta series -- see~\cite{CG08} for one paper were this was made use of.  In Section~\ref{sec:algorithm}, we discuss an algorithm which gives us Fourier expansions of a basis of $\rmJ_{k, \cL}$ up to arbitrary precisions.  The idea behind this algorithm is as follows.  Restricting a Jacobi form $\phi(\tau, z)$ to $z = s^\tr z'$, $z' \in \CC$, $s \in \ZZ^N$ we obtain a Jacobi form of the same weight and scalar index.  We first prove that sufficiently many of these restrictions are enough to uniquely determine $\phi$.  In a very special case and for some fixed weights and indices, a similar result was used by Das~\cite{Das10} to analyze the structure of so-called hermitian Jacobi forms.

As a second step, we prove that one can construct Jacobi forms in this way.  Note that there is an analog of the restriction $z = s^\tr z'$ for formal Fourier expansions.  Suppose that $\phi$ is a formal Fourier expansion that has all symmetries inherent to Fourier expansions of Jacobi forms of weight~$k$ and index~$\cL$.  If sufficiently many restrictions of $\phi$ are Fourier expansions of Jacobi forms of correct scalar index, then $\phi$ is the Fourier expansion of a Jacobi form.  Algorithm~\ref{alg:jacobiforms} makes us of this fact to successively compute restrictions, until the only formal Fourier expansions that restrict to Jacobi forms are those which come from $\rmJ_{k, \cL}$.

\begin{theorem}
There is an algorithm (Algorithm~\ref{alg:jacobiforms}) that, for any $k \in \ZZ$ and even, positive definite lattice~$\cL$, computes Fourier expansions of Jacobi forms of weight~$k$ and index~$\cL$.
\end{theorem}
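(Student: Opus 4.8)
The strategy is to present $\rmJ_{k,\cL}$ as an effectively computable intersection of preimages, under ``restriction to a line'' maps, of spaces $\rmJ_{k,m}$ of Jacobi forms of \emph{scalar} index~$m$; those are computable by Skoruppa's algorithm~\cite{Sk84} together with the newform theory of~\cite{EZ85,SZ88,SZ89}. Fix a Gram matrix~$G$ of $\cL$ and let $\rmV = \rmV_{k,\cL}$ denote the $\CC$\nbd vector space of formal series $\phi = \sum_{n,r} c(n,r)\,q^n \zeta^r$ ($n \in \ZZ$, $r \in \ZZ^N$) supported on the cone prescribed by holomorphy at~$\infty$ and invariant under the two symmetries that every Fourier expansion of a Jacobi form of weight~$k$ and index~$\cL$ must obey: $c(n,r) = c(n + \lambda^\tr r + \tfrac12\lambda^\tr G\lambda,\, r + G\lambda)$ for $\lambda \in \ZZ^N$ (elliptic transformation) and $c(n,r) = (-1)^k c(n,-r)$ (from $-I$). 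Then $\rmJ_{k,\cL} \subseteq \rmV$, and truncation to $n \le B$ makes the resulting space $\rmV(B)$ finite\nbd dimensional and explicitly presented. A Sturm\nbd type bound gives an effective $B_0 = B_0(k,\cL)$ beyond which truncation is injective on $\rmJ_{k,\cL}$, and (via the theta decomposition of Section~\ref{sec:modularforms}, which identifies $\rmJ_{k,\cL}$ with a space of vector valued modular forms for a Weil representation attached to $\cL$ in weight $k - \tfrac N2$) a computable dimension formula yields $\dim\rmJ_{k,\cL}$.

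For $s \in \ZZ^N$ let $\phi|_s$ be the Fourier expansion of $(\tau, z') \mapsto \phi(\tau, sz')$, so $(\phi|_s)(n, r'') = \sum_{r\,:\,s^\tr r = r''} c(n,r)$. Using $\lambda s$ in the first symmetry and evenness of $\cL$ (so $s^\tr G s \in 2\ZZ$), one checks that $\phi|_s$ satisfies the scalar\nbd index symmetries for $m_s := \tfrac12 s^\tr G s$, and that $\cdot|_s$ carries $\rmJ_{k,\cL}$ into $\rmJ_{k,m_s}$ without changing the weight, because restricting the variable commutes with the $\SL2(\ZZ)$\nbd slash; on truncations $\cdot|_s$ is an explicit matrix, and for $s \ne 0$ the space $\rmJ_{k,m_s}$ is computable to any precision. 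The crux is the converse, the \emph{Claim}: there is an effectively computable finite set $\cS \subset \ZZ^N \setminus\{0\}$ such that every $\phi \in \rmV$ with $\phi|_s \in \rmJ_{k,m_s}$ for all $s \in \cS$ already lies in $\rmJ_{k,\cL}$.

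To prove the Claim I would pass to the theta decomposition $\phi = \sum_\mu h_\mu(\tau)\,\theta_\mu(\tau,z)$ with formal $q$\nbd series $h_\mu$. Since invariance under $T = \left(\begin{smallmatrix}1&1\\0&1\end{smallmatrix}\right)$ is built into $\rmV$, membership in $\rmJ_{k,\cL}$ amounts to $(h_\mu)_\mu$ transforming correctly under $S = \left(\begin{smallmatrix}0&-1\\1&0\end{smallmatrix}\right)$. Restricting $z = sz'$ expands $\theta_\mu(\tau, sz') = \sum_\nu \alpha^{(s)}_{\nu\mu}(\tau)\,\theta^{(m_s)}_\nu(\tau,z')$, with the coefficients $\alpha^{(s)}_{\nu\mu}$ theta series of the rank\nbd$(N-1)$ lattice $s^\perp$; hence the $\nu$th component of $\phi|_s$ is $\sum_\mu \alpha^{(s)}_{\nu\mu} h_\mu$, and $\phi|_s \in \rmJ_{k,m_s}$ forces these explicit $\theta$\nbd twisted combinations of the $h_\mu$ to be modular. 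Tracking the intertwining between the Weil representations of $\cL$, of $(m_s)$, and of $s^\perp$ furnished by this splitting, the $S$\nbd transformation defect of $(h_\mu)$ is annihilated by a linear map attached to~$s$, and finitely many $s$ --- enough to make the associated maps jointly injective, which can be tested and met by enumeration --- force this defect to vanish; finite\nbd dimensionality of $\rmJ_{k,\cL}$ is what makes a finite such $\cS$ possible.

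Granting the Claim, Algorithm~\ref{alg:jacobiforms} fixes a precision $B \ge B_0$ (enlarged by an effective amount so the Claim survives truncation), enumerates $s_1, s_2, \ldots \in \ZZ^N \setminus \{0\}$, and at stage $t$ computes bases of $\rmJ_{k,m_{s_i}}$ to precision~$B$ for $i \le t$ and then the candidate space $C_t := \rmV(B) \cap \bigcap_{i \le t}(\cdot|_{s_i})^{-1}\big(\rmJ_{k,m_{s_i}}(B)\big)$ by linear algebra over~$\QQ$. The $C_t$ are non\nbd increasing and always contain the truncation of $\rmJ_{k,\cL}$; once $\{s_1,\ldots,s_t\}$ contains the set $\cS$ of the Claim one has $C_t = \rmJ_{k,\cL}(B)$, which is detected by $\dim C_t$ reaching the value given by the dimension formula, whereupon a basis of $C_t$ --- equivalently of $\rmJ_{k,\cL}$ --- is returned. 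The main obstacle is the Claim, and within it its \emph{effectivity}: not merely to know that \emph{some} finite family of restrictions separates $\rmJ_{k,\cL}$ inside $\rmV$, but to exhibit one, together with a sufficient precision, which comes down to controlling uniformly in $s$ how the $\SL2(\ZZ)$\nbd transformation laws of the scalar restrictions $\phi|_s$ recombine, through the rank\nbd$(N-1)$ theta coefficients $\alpha^{(s)}$, into that of $\phi$ --- i.e.\ verifying that the associated linear maps have trivial common kernel for some $\cS$ one can write down. Everything else is routine manipulation of Fourier expansions and linear algebra.
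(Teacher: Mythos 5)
Your algorithmic skeleton is the same as the paper's: restrict along finitely many vectors $s$ to scalar-index Jacobi forms (computable by \cite{Sk84}), intersect the preimages inside the space of formal expansions with the symmetries \eqref{eq:higherjacobiforms_fourier_relation1}--\eqref{eq:higherjacobiforms_fourier_relation2}, and detect success by comparison with the dimension formula while enlarging $\cS$ otherwise. The reduction of the Claim to the vanishing of the $S$\nbd defect $\psi = \phi - \phi\big|_{k,L}S$, using that restriction commutes with the $S$\nbd slash, is also exactly what the paper does in Proposition~\ref{prop:modularity_s_of_higher_jacobi_forms}.

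The genuine gap is in your proof of the Claim itself, which you correctly identify as the crux but do not close. After the theta decomposition $\psi = \sum_\mu g_\mu\,\theta_{L,\mu}$, the hypothesis $\phi[s] \in \rmJ_{k,m_s}$ gives $\sum_\mu \alpha^{(s)}_{\nu\mu}\,g_\mu = 0$ for all $\nu$ and all $s \in \cS$, and you assert that finitely many $s$ make these maps ``jointly injective,'' attributing the existence of such a finite $\cS$ to the finite\nbd dimensionality of $\rmJ_{k,\cL}$. That justification does not work: the defect $(g_\mu)_\mu$ is a tuple of a priori arbitrary holomorphic functions of $\tau$ (it is \emph{not} constrained to lie in $\rmJ_{k,\cL}$ or any finite\nbd dimensional space), so joint injectivity is a statement about an infinite\nbd dimensional function space and cannot be ``tested and met by enumeration'' as stated. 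One would at least need to exhibit a $\tau_0$ at which the matrix $\big(\alpha^{(s)}_{\nu\mu}(\tau_0)\big)$ has full column rank and then invoke holomorphy; you neither prove that such a $\tau_0$ and a finite $\cS$ exist nor give a procedure to find them. The paper's mechanism is different and is what actually supplies the finiteness: for fixed $\tau$ the defect is an elliptic function of $z$ with respect to $\cL$, an elliptic function of \emph{one} variable with more than a computable number $b$ of zeros vanishes by the Residue Theorem (Lemma~\ref{la:vanishing_z1_of_higher_jacobi_forms}, following \cite{EZ85}, Theorem~1.2), and an induction on the rank $N$ (Lemma~\ref{la:vanishing_s_of_higher_jacobi_forms} and Corollary~\ref{cor:vanishing_s_of_higher_jacobi_forms}) converts this into an explicit finite set of restriction directions that detect vanishing. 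A secondary omission: your Claim quantifies over formal series $\phi \in \rmV$, but applying the slash by $S$ requires $\phi$ to converge to a holomorphic function on $\HS_{1,N}$; the paper deduces this from the polynomial growth of the coefficients of the scalar restrictions, a step you would also need.
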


We remark that our algorithm depends on enumerating short vectors in $\cL$.  In order to use our algorithm we only need to enumerate the lattice once for all weights, as long as certain conditions are met.  Actually, $\cL$ has rank $6$ or $7$ for all computations that we performed.  These ranks are feasible even with non-parallel implementations of lattice enumeration that are not optimal.  In this sense, our algorithm is good enough to yield new results even without consideration of technical details of the implementation.  In more complicated cases, $\cL$ can be too large to enumerate sufficiently many vectors so that Algorithm~\ref{alg:jacobiforms} finds enough restrictions.  At the end of Section~\ref{sec:implementation} we comment on further developments that, we think, can mitigate the impact of large lattice ranks on performance.

An idea which similar to the one behind our algorithm was employed by Poor and Yuen in order to compute Fourier expansions of Siegel modular forms of genus~$4$~\cite{PY07}.  Restricting Siegel modular forms to many different elliptic modular curves, embedded into the Siegel upper half space, they obtained relations between Fourier coefficients.  When increasing the number of such restrictions these relations become sufficient to determine the space of Fourier expansions that come from Siegel modular forms inside the space of all possible Fourier coefficients.  Poor and Yuen could not prove that this is always the case, and such a proof is not available until today.  Their algorithm, however, works perfectly fine in practice.

There is one fundamental difference between our algorithm and the one in~\cite{PY07}.  While Poor and Yuen would have to increase the number of restrictions if they wanted to compute Fourier expansions up to higher precisions, this is not the case for Jacobi forms.  The reason for this is that~\eqref{eq:higherjacobiforms_fourier_relation1} and~\eqref{eq:higherjacobiforms_fourier_relation2} hold, while there is no correspondingly powerful tool in the case of Siegel modular forms.

\subsection{Hecke operators and newform theories}

Hecke theory and the resulting theory of newspace can be used to speed up computations of classical elliptic modular forms.  Section~\ref{sec:heckeoperators} contains investigations related to this problem.  In~\cite{SZ89}, it was proved that we have a direct sum decomposition
\begin{gather*}
  J_{k, (2 m)}
=
  \bigoplus_{l' l^2 \isdiv m} J_{k, (2 m \slashdiv l' l^2)}^{\rm new} \big| V_{l'} U_l 
\end{gather*}
of the space of Jacobi forms of index $\cL = (2 m)$.  This decomposition was already used in~\cite{Sk84} to provide an optimization for the computation of Jacobi forms of scalar index.  Generalizations of~$U_l$ and $V_l$ to Jacobi forms of arbitrary indices were provided in~\cite{Gr94}.  One might hope to find a decomposition analogue to the above for all Jacobi indices $\cL$, but this is not the case.  The lattice $\left(\begin{smallmatrix} 8 & 4 \\ 4 & 8 \end{smallmatrix}\right)$ provides us with a counter example in the following way.  Assuming that a direct sum decomposition into newspaces exists, it is not hard to write down a recursive formula for the dimensions of newspaces.  Computing these hypothetical dimensions of newspaces, one expects to obtain non-negative values, but if $k = 4$ one obtains $-1$.  We will consider the question which decomposition yields a direct sum in a sequel.

We investigate the induced action of both generalizations $U_s$ and $V_l$ on vector valued modular forms.  The action coming from $U_s$ coincides with the action of an operator that was already studied by Scheithauer and Bruinier~\cite{Sch11, Br12}.  However, the induced action of $V_l$ is new.  It corresponds to a curious extension of the discriminant group, and deserves further investigation.  As it is only loosely related to the theme of this paper, we defer this study to a future publication.

The main result of Section~\ref{sec:newforms_algorithm} says that newforms with respect to the Hecke operators $U_s$ can be characterized by restrictions to Jacobi forms of scalar index.  As a result, we are able to describe an Algorithm which makes use of the action of $U_s$ to reduce memory consumption of computations of Jacobi forms.  Since at this stage we are not able to prove a newspace decomposition for Jacobi forms which is a direct sum, this does not result in a reduction of runtime.  It does, however, enable us to split up computations.  This is a useful feature when computing large spaces of Jacobi forms.

\subsection{A reference implementation and two families of examples}

We provide an implementation of Algorithm~\ref{alg:jacobiforms} in Sage~\cite{sageticket-jacobi}.  Section~\ref{sec:implementation} contains instructions on how to install and use it.  In Section~\ref{sec:implementation}, we also discuss shortcomings of our implementation and possible improvements.  Regardless of these imperfections, our implementation can be used for basic computations.  It is also meant to be a reference implementation, which clarifies Section~\ref{sec:algorithm}.

The reference implementation allows to compute Fourier expansions of Jacobi forms of arbitrary weights $> 2 + \frac{N}{2}$.  We illustrate how to invoke the corresponding commands and give examples.  Note that Fourier expansions of any other weight and of weakly holomorphic Jacobi forms can be obtained by division by powers of the modular discriminant.

In Section~\ref{sec:data}, we provide Fourier expansions of vector valued elliptic modular forms for cyclic discriminant forms of even order.  If these discriminant forms come from positive definite rank~$1$ lattices, the attached vector valued modular forms can be easily computed using Jacobi forms of scalar index.  If the associated lattice of rank~$1$ is negative definite, then they correspond to skew-holomorphic Jacobi forms, whose Fourier expansions are not easily accessible.  In this case, only our new algorithm allows to perform explicit computations.  Section~\ref{sec:data} contains some printed tables of Fourier expansions.  Fourier expansions for weights between $\frac{5}{2}$ and $\frac{125}{2}$ can be found at the author's homepage~\cite{raumhomepage}.

\subsection{Further Applications}

The application to special Picard groups that was explained at the beginning of the introduction can be found in Section~\ref{sec:specialdivisors}.  At this place, we briefly discuss two further applications, that we will not treat in detail.  The first concerns explicit computations of Borcherds products, that was treated in~\cite{GKR11}.  We have already explained Borcherds's construction in Section~\ref{ssec:introduction:specialpicardgroups}.  This construction does not only work for meromorphic forms, but in general $a(0, 0)$ may be an arbitrary integral number.  A Borcherds product determined by a vector valued modular form with coefficients~$a(m,\mu)$ will have weight $a(0, 0) \slashdiv 2$, and it will be holomorphic if $a(m, \mu) \ge 0$ for all $m < 0$.  For our purposes, we restrict to the case of half\nbd  integral weights.  Such Borcherds products are important in several areas, including string theory~\cite{JS05} and $M_{24}$-moonshine~\cite{EOT10, CD12}.

Recall that we assume that $\cL$ has signature $(2, n)$, $n > 2$.  Borcherds products arise from weakly holomorphic vector valued modular forms of weight $\frac{2 - n}{2}$ and type $\rho_{\cL}$.  If, for example, $\cL$ is a lattice of the form $\cU \oplus \cU \oplus \cL'(-1)$, then $\rho_{\cL} \cong {\check \rho}_{\cL'}$.  Since $\cL'$ is necessarily positive definite, we can apply the theta decomposition without problems.  In order to obtain weakly holomorphic Jacobi forms, which serve as input data to Borcherds's multiplicative lift,  one can compute vector valued modular forms of weight $\frac{n - 2}{2} + 12 l$, for some $0 < l \in \ZZ$.  Dividing the resulting Fourier expansion by $\Delta^l$, where $\Delta$ is the discriminant modular form, gives us a basis of weakly holomorphic vector valued modular forms with poles at infinity of order less than or equal to~$l$.  The needed computations of Fourier expansion of Jacobi forms of index~$\cL'$ are rather easy for, say, $ n \le 4$.

The second application of vector valued modular forms which we want to discuss is based on reversing the ideas in~\cite{Sch08}.  Scheithauer pursued a detailed study of averages of scalar valued elliptic modular forms.  The Fourier expansions of resulting vector valued modular forms depend on the Fourier expansions of the input data at all cusps.  Modular forms that one starts with typically do not have squarefree level.  As a consequence, it is quite demanding to obtain expansions at cusps different from~$\infty$.  On the other hand, $\SL{2}(\ZZ)$ acts transitively on the set of cusps~$\cC$ of a congruence subgroup $\Gamma \subseteq \SL{2}(\ZZ)$.  Denote the associated permutation representation by $\rho$.  A modular form~$f$ of weight~$k$ for $\Gamma$ gives rise to a vector valued modular form of weight~$k$ and type~$\rho$.  Its components with respect to the obvious basis of $\CC[\cC]$ are simply given by the expansions~$f_\frakc$ of $f$ at the various elements $\frakc$ of $\cC$.

Since $\rho$ is unitary, it can be decomposed into a direct sum of Weyl representations.  Our work thus allows us to compute the Fourier expansion of the vector valued modular form with prescribed $f_\infty$.  Thus we obtain all $f_\frakc$.


\vspace{0.3em}
{\tit Acknowledgment: The author thanks Max Plank Institute for Mathematics, Bonn, Germany for granting access to its computation server.  The referee helped improving the quality of this paper by useful comments also on the accompanying implementation.}

\section{Discriminant forms and positive definite lattices}
\label{sec:discriminantforms}

In this section, we relate discriminant forms and positive definite lattices.  In particular, we explain Algorithm~\ref{alg:positive_discriminant_forms}, which allows us to find a positive definite lattice that represents a given discriminant form.  A useful overview over discriminant forms can be found in~\cite{Ni79}.
\begin{definition}
Let~$D$ be a finite abelian group that is equipped with a quadratic form $D \rightarrow \QQ \slashdiv \ZZ,\, \gamma \mapsto \gamma^2$.  We call $D$ a discriminant form if the induced bilinear form $(\gamma, \delta) := (\gamma + \delta)^2 - \gamma^2 - \delta^2 \pmod{1}$ is non-degenerate.
\end{definition}
In~\cite{Wa63}, it was shown that all discriminant forms arise from lattices.  Even more was shown:  Two even, non-degenerate lattices are stably isomorphic if and only if the corresponding discriminant forms are isomorphic.  A special case of Theorem~1.10.1 in~\cite{Ni79}, tells us that, for every discriminant form, one can find a positive definite lattice which represents it.  The goal of this section is to provide an algorithm which computes one such lattice.

For later use, we fix some notation.  Let $\cL \subset \ZZ^N$ be a non-degenerate, even lattice, and write $\langle \cdot\,,\,\cdot\rangle_{\cL}$ for the induced even bilinear form.  Set $q_{\cL}(v) = \langle v, v \rangle_{\cL} \slashdiv 2$.  The dual lattice $\cL^\#$ consists of all vectors $v \in \cL \otimes \QQ$ such that $\langle v, w \rangle_{\cL} \in \ZZ$ for all $w \in \cL$.  Then the discriminant form~$\disc\, \cL$ attached to $\cL$ is defined as $\cL^\# \slashdiv \cL$ equipped with the quadratic form $q_\cL \pmod{1}$.  For many purposes, in particular in the context of Jacobi forms, it is good to identify $\cL^\#$ with $\ZZ^N$.  Given a Gram matrix $L$ of $\cL$, we find that $\cL$ then corresponds to $L \ZZ^N$.  To relieve notation, we normally write $\cL = L$ for a matrix $L$ when we mean the lattice spanned by the columns of $L$.  The quadratic form $q_\cL$ on $\cL^\#$ and $\cL$ is given by $v \mapsto L^{-1}[v] = v^\tr L^{-1} v$.

We start with a basic lemma on representation numbers of the unimodular lattice~$E_8$.
\begin{lemma}
\label{la:E8_represents_all_numbers}
Given any positive, even number $2 n$, there are vectors $v, w \in E_8$ of length $2n$ and scalar product $\langle v, w \rangle_{E_8} = 2n - 1$.
\end{lemma}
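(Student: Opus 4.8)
The plan is to reduce the statement to Lagrange's four\nbd square theorem by writing down $v$ and $w$ explicitly in a convenient coordinate model of $E_8$. First I would record the elementary observation that makes the inner\nbd product condition manageable: if $\langle v, v\rangle_{E_8} = \langle w, w\rangle_{E_8} = 2n$ and $\langle v, w\rangle_{E_8} = 2n - 1$, then $\langle v - w, v - w\rangle_{E_8} = 4n - 2(2n - 1) = 2$, so $r := v - w$ is necessarily a root of $E_8$. Conversely, it suffices to produce a single vector $v \in E_8$ of norm $2n$ together with a root $r$ satisfying $\langle v, r\rangle_{E_8} = 1$; then $w := v - r$ automatically has $\langle w, w\rangle_{E_8} = \langle v, v\rangle_{E_8} - 2\langle v, r\rangle_{E_8} + \langle r, r\rangle_{E_8} = 2n - 2 + 2 = 2n$ and $\langle v, w\rangle_{E_8} = \langle v, v\rangle_{E_8} - \langle v, r\rangle_{E_8} = 2n - 1$, as required.

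To build such a pair I would use the standard model $E_8 = \{x \in \ZZ^8 : \sum_i x_i \in 2\ZZ\} \cup \{x \in (\ZZ + \tfrac{1}{2})^8 : \sum_i x_i \in 2\ZZ\}$ with quadratic form $\sum_i x_i^2$, in which $r := e_1 - e_2$ is a root. By Lagrange's theorem the odd number $2n - 1$ can be written as $a_3^2 + a_4^2 + a_5^2 + a_6^2$ with $a_j \in \ZZ$; set $v := e_1 + a_3 e_3 + a_4 e_4 + a_5 e_5 + a_6 e_6$. This $v$ has integer coordinates, and since $a \equiv a^2 \pmod 2$ we get $\sum_i v_i = 1 + \sum_{j=3}^{6} a_j \equiv 1 + \sum_{j=3}^{6} a_j^2 = 1 + (2n - 1) = 2n \equiv 0 \pmod 2$, so $v \in E_8$ (in fact $v \in D_8$). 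One checks immediately that $\langle v, v\rangle_{E_8} = 1 + (2n - 1) = 2n$ and $\langle v, r\rangle_{E_8} = v_1 - v_2 = 1$. Then $w := v - r = e_2 + a_3 e_3 + a_4 e_4 + a_5 e_5 + a_6 e_6$ again lies in $E_8$ (its coordinate sum equals that of $v$), has norm $2n$, and satisfies $\langle v, w\rangle_{E_8} = 2n - 1$, completing the construction.

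I do not expect any genuine obstacle here: the only structural input is the equivalence of the inner\nbd product condition with $v - w$ being a root, and after that the problem is just the representability of $2n - 1$ as a sum of at most four squares, the $E_8$\nbd membership condition being satisfied automatically because a square and its base have the same parity. The single thing to keep an eye on is the base case $n = 1$, where $2n - 1 = 1$ and $v, w$ turn out to be roots themselves; this is consistent with the statement. If one prefers to avoid the half\nbd integral vectors entirely, the same construction can be phrased inside the sublattice $D_8 \subset E_8$, since the vectors produced already lie there.
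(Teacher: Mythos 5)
Your proof is correct, and it takes a genuinely different and considerably more elementary route than the paper. The paper identifies the number of pairs $(v,w)$ with the Fourier coefficient of the degree-$2$ theta series of $E_8$ at $\left(\begin{smallmatrix}2n & 2n-1\\ 2n-1 & 2n\end{smallmatrix}\right)$, reduces by a unimodular transformation to the matrix $\left(\begin{smallmatrix}2n & 1\\ 1 & 2\end{smallmatrix}\right)$, i.e.\ to the first Fourier--Jacobi coefficient of the Siegel Eisenstein series $E_4^{(2)}$, and then invokes Igusa's structure theorem, the Maa\ss\ lift property, and the positivity of the coefficients of the Jacobi Eisenstein series $E_{4,1}$ from Eichler--Zagier. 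You instead observe that the conditions force $v-w$ to be a root, fix the root $r=e_1-e_2$ in the standard coordinate model, and produce $v$ explicitly from a four-square representation of $2n-1$ via Lagrange's theorem; the parity check $a\equiv a^2\pmod 2$ correctly places $v$ (and hence $w=v-r$) in $D_8\subset E_8$, and all the inner-product computations check out. Your argument is self-contained and avoids all the modular-forms machinery; what the paper's approach buys in exchange is quantitative information (the full representation number as a coefficient of $E_{4,1}$, hence its growth), which is not needed for the lemma as stated. Both proofs are valid; yours is arguably the more natural one for a pure existence statement.
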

\begin{proof}
In order to prove the lemma, it suffices to show that the $\left(\begin{smallmatrix}2n & 2n -1 \\ 2n - 1 & 2n\end{smallmatrix}\right)$-th coefficient of the theta series attached to $E_8$ is non-zero.  By~\cite{Fr91}, this theta series is a Siegel modular form of degree~$2$ for the full modular group.  By results of Igusa~\cite{Ig84}, it is a (non-zero) multiple of the Siegel Eisenstein series $E^{(2)}_4$.  The Fourier coefficients of $E^{(2)}_4$ are invariant under unimodular transformations, so that it suffices to analyse the Fourier coefficients of
\begin{gather*}
  \begin{pmatrix}-1 & 0 \\ -1 & 1 \end{pmatrix}
  \begin{pmatrix}2n & 2n -1 \\ 2n - 1 & 2n\end{pmatrix}
  \begin{pmatrix}-1 & -1 \\ 0 & 1 \end{pmatrix}
=
  \begin{pmatrix}2n & 1 \\ 1 & 2\end{pmatrix}
\text{.}
\end{gather*}
That is, we have to analyze the first Fourier Jacobi coefficient of $E^{(2)}_4$.  It is a weight~$4$, index~$1$ Jacobi form in the sense of~\cite{EZ85}.  Since $E^{(2)}_4$ is a Maa\ss\ lift, this Fourier Jacobi coefficient is non-zero.  And since the space of weight~$4$, index~$1$ Jacobi forms is generated by the Jacobi Eisenstein series $E_{4, 1}$, we are reduced to analyzing the Fourier coefficients of this Jacobi form.  Its coefficients were studied in \cite{EZ85}, Theorem 2.1.  In particular, there is a remark preceding the theorem saying that the Fourier coefficients of $E_{4, 1}$ are positive.  This proves the lemma.
\end{proof}

The main result of this section is the following algorithm and Theorem~\ref{thm:positive_discriminant_forms}.
\begin{algorithm}
\label{alg:positive_discriminant_forms}
Given a lattice $\cL$ the following algorithm computes a positive definite lattice $\cL'$ that is stabily equivalent to $\cL$.
\begin{enumerate}[(1)]
\item Set $\cL' \leftarrow \cL$.  As long as $\cL'$ is not positive definite iterate the following steps.
\vspace{0.5em}

\item \label{it:alg:positive_discriminant_forms:choose_vector}
      Choose a vector $v \in \cL'$ of negative length which is as short as possible.  Set $n \leftarrow - q_{\cL'}(v)$.
\vspace{0.5em}

\item \label{it:alg:positive_discriminant_forms:E8_vectors}
      Choose a pair of vectors $w_1, w_2 \in E_8$ satisfying $q_{E_8}(w_1) = q_{E_8}(w_2) = n$ and $\langle w_1, w_2 \rangle_{E_8} = 2n - 1$.
\vspace{0.5em}

\item \label{it:alg:positive_discriminant_forms:new_lattice}
      Set $\cL \leftarrow \lspan( v + w_1, v + w_2 )^\perp \subset \cL' \oplus E_8$.
\end{enumerate}
\end{algorithm}
This proves Theorem~\ref{thm:positive_discriminant_forms}.  The algorithm also effectively reproves a theorem by Nikulin.
\begin{corollary}[Special case of Theorem~1.10.1 in~\cite{Ni79}]
\label{cor:positive_discriminant_forms}
Given a lattice~$\cL$ there is a positive definite lattice $\cL'$ that is stably isomorphic to~$\cL$.
\end{corollary}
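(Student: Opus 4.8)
The plan is to read the corollary off Algorithm~\ref{alg:positive_discriminant_forms} and Theorem~\ref{thm:positive_discriminant_forms}: on input $\cL$ the algorithm outputs an even, positive definite lattice $\cL'$ with $\cL \oplus \cU_1 \cong \cL' \oplus \cU_2$ for suitable unimodular $\cU_1, \cU_2$, and this relation is exactly the assertion that $\cL'$ is stably isomorphic to $\cL$. So the only thing that actually needs checking is that the algorithm behaves as advertised, i.e. that a single pass through steps~\ref{it:alg:positive_discriminant_forms:choose_vector}--\ref{it:alg:positive_discriminant_forms:new_lattice} preserves the stable isomorphism class and strictly lowers the negative index of the signature.

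For the invariant I would look at one iteration. Put $u_i = v + w_i \in \cL' \oplus E_8$. Using $q_{\cL'}(v) = -n$, $q_{E_8}(w_i) = n$, $\langle w_1, w_2\rangle_{E_8} = 2n - 1$, and orthogonality of the two summands, one gets $\langle u_i, u_i\rangle = -2n + 2n = 0$ and $\langle u_1, u_2\rangle = -2n + (2n-1) = -1$, so $M := \lspan(u_1, u_2)$ has Gram matrix $\left(\begin{smallmatrix} 0 & -1 \\ -1 & 0\end{smallmatrix}\right)$: a unimodular sublattice of signature $(1,1)$. A nondegenerate unimodular sublattice always splits off -- the map sending $x$ to the unique element of $M$ that represents $m \mapsto \langle x, m\rangle$ is a retraction onto $M$, giving $\cL' \oplus E_8 = M \oplus M^\perp$. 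Since $E_8$ and $M$ are unimodular, $\disc(M^\perp) \cong \disc(\cL' \oplus E_8) \cong \disc(\cL')$, whence by the stable-isomorphism criterion of~\cite{Wa63} the output $M^\perp$ of step~\ref{it:alg:positive_discriminant_forms:new_lattice} is stably isomorphic to $\cL'$; it is also even, being a sublattice of the even lattice $\cL' \oplus E_8$. Inductively, every lattice produced in the loop is even and stably isomorphic to $\cL$.

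For termination I would track the signature: if $\cL'$ has signature $(b^+, b^-)$, then $\cL' \oplus E_8$ has signature $(b^+ + 8, b^-)$, and removing the $(1,1)$-sublattice $M$ leaves $M^\perp$ with signature $(b^+ + 7, b^- - 1)$. Hence $b^-$ strictly decreases on each pass, so after finitely many iterations it reaches $0$; the lattice is still nondegenerate, its discriminant form being unchanged, so it is then positive definite and the loop halts. The two choices inside the loop are always available: a nondegenerate, non-positive-definite even lattice represents some negative even integer (step~\ref{it:alg:positive_discriminant_forms:choose_vector}), and the required pair $w_1, w_2 \in E_8$ exists by Lemma~\ref{la:E8_represents_all_numbers} (step~\ref{it:alg:positive_discriminant_forms:E8_vectors}). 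All the genuinely arithmetic content sits in that lemma, which we may invoke, and the remaining ingredient is the elementary fact that a unimodular sublattice splits off; I therefore do not expect a real obstacle -- the corollary is essentially a repackaging of the output guarantee of Algorithm~\ref{alg:positive_discriminant_forms}.
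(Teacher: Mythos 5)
Your proposal is correct and follows essentially the same route as the paper: one pass of Algorithm~\ref{alg:positive_discriminant_forms} produces the isotropic pair $v+w_1, v+w_2$ spanning a hyperbolic plane (your value $q(v+w_1)=0$ is right; the paper's ``$=1$'' is a typo), and termination follows from the signature dropping from $(b^+,b^-)$ to $(b^++7,b^--1)$ at each step. You merely make explicit two points the paper leaves implicit --- that the unimodular sublattice $M$ splits off orthogonally and that $\disc(M^\perp)\cong\disc(\cL')$ gives stable isomorphism via~\cite{Wa63} --- which is a welcome but not substantively different elaboration.
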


\begin{proof}[Proof of Theorem~\ref{thm:positive_discriminant_forms}]
By what we have observed above, we have $D \cong \disc\, \cL'$ for some even, non-degenerate lattice~$\cL'$.  Applying Algorithm~\ref{alg:positive_discriminant_forms} we get the desired lattice $\cL$.
\end{proof}


\begin{proof}[Proof of correctness and termination of Algorithm~\ref{alg:positive_discriminant_forms}]
Whenever Step~\ref{it:alg:positive_discriminant_forms:choose_vector} is done, $\cL'$ is not positive definite.  In particular, there is some vector of negative length in $\cL'$.

By Lemma~\ref{la:E8_represents_all_numbers}, vectors $w_1$ and $w_2$ as in Step~\ref{it:alg:positive_discriminant_forms:E8_vectors} always exist.  The lattice $\lspan( v + w_1, v + w_2 )$ is a hyperbolic plane.  Indeed, we have $q_{\cL' \oplus E_8}(v + w_1) = q_{\cL'}(v) + q_{E_8}(w_1) = 1$, and $q_{\cL' \oplus E_8}(v + w_2) = 0$.  Further, $\langle v + w_1, v + w_2 \rangle_{\cL' \oplus E_8} = 2 q_{\cL'}(v) + \langle w_1, w_2 \rangle_{E_8} = -1$.

From this we see that the number of negative eigenvalues of $\lspan( v + w_1, v + w_2 )^\perp \subset \cL' \oplus E_8$ is one less than the number of negative eigenvalues of $\cL'$.  This proves that Algorithm~\ref{alg:positive_discriminant_forms} terminates and that the output $\cL'$ is positive definite.
\end{proof}

One should remark that the algorithm is not optimal in the sense that $\cL'$ possibly contains a unimodular lattice.
\begin{example}
The lattice $\cL = -\left(\begin{smallmatrix}2 & 1 \\ 1 & 2 \end{smallmatrix}\right)$ and the positive definite lattice
\begin{gather*}
  \cL'
=
\begin{pmatrix}
2 & 1 & 1 & 1 & -1 & -1 \\
1 & 2 & 1 & 1 & -1 & -1 \\
1 & 1 & 2 & 0 & -1 & -1 \\
1 & 1 & 0 & 2 &  0 & -1 \\
-1 & -1 & -1 &  0 & 2 & 1 \\
-1 & -1 & -1 & -1 & 1 & 2
\end{pmatrix}
\end{gather*}
are stabily isomorphic.  Since the discriminant group of both lattices is cyclic of prime order, we can easily see this by computing the determinant of both matrices.  Since $\cL'$ has rank~$6$ it does not split a hyperbolic plane.  When Algorithm~\ref{alg:positive_discriminant_forms} is applied to~$\cL$, one gets a lattice~$\cL''$ of rank~$14$, which is necessarily of the form $\cL'' \cong \cL' \oplus E_8$.
\end{example}


\section{Vector valued modular forms and Jacobi forms}
\label{sec:modularforms}

Given $N \in \ZZ_{>0}$, let
\begin{gather*}
  \HS_{1, N}
:=
  \HS \times \CC^N
\end{gather*}
be the Jacobi upper half plane, where $\HS = \{\tau = u + i v \,:\, v > 0\} \subset \CC$ is the Poincar\'e upper half plane.  It is acted on by the full Jacobi group
\begin{gather*}
  \Gamma^\rmJ
:=
  \SL{2}(\ZZ) \ltimes (\ZZ^2 \otimes \ZZ^N)
\text{,}
\end{gather*}
where the semidirect product is defined via the natural action of $\SL{2}(\ZZ)$ on $\ZZ^2$.  We write $\gamma^\rmJ = \big( \gamma, (\lambda, \mu) \big)$ for a typical element in $\Gamma^\rmJ$.  Here, $\lambda, \mu \in \ZZ^N$ are the columns of the second component of $\gamma^\rmJ$, which is viewed as a matrix via the natural identification $\ZZ^2 \otimes \ZZ^N \rightarrow \Mat{N, 2}{\ZZ}$.  As before, we write $\gamma = \left(\begin{smallmatrix}a & b \\ c & d\end{smallmatrix}\right)$ for a typical element in $\SL{2}(\ZZ)$.

The action $\Gamma^\rmJ \times \HS_{1, N} \rightarrow \HS_{1, N}$ is given by
\begin{gather}
  \gamma^\rmJ (\tau, z)
=
  \Big( \frac{a \tau + b}{c \tau + d}, \frac{z + \lambda \tau + \mu}{c \tau + d} \Big)
\text{.}
\end{gather}
This action extends to a family of actions on function $\phi :\, \HS_{1, N} \rightarrow \CC$.  They depend on both a weight $k \in \ZZ$ and a (Jacobi) index $L \in \Mat{N}{\ZZ}$ which is symmetric and has even diagonal entries.  In order to define them, denote $z^\tr L z$ by $L[z]$ ($z \in \CC^N$).
\begin{multline*}
  \big( \phi \big|_{k, L}\, \gamma^\rmJ \big) (\tau, z)
:=
\\[4pt]
  (c \tau + d)^{-k} e\big( -c L[z + \lambda \tau + \mu] \slashdiv (c \tau + d) + \tau L[\lambda] + 2 \lambda^\tr L z \big)
  \;
  \phi\big(\gamma^\rmJ (\tau, z) \big)
\text{.}
\end{multline*}
Here and throughout the paper, we use the notation $e(x) = \exp(2 \pi i\, x)$.

The following definition, which is folklore, includes a special case of a definition made in~\cite{Zi89}, where (nonweak) Jacobi forms of higher degree were defined.  It also generalizes the definition of weak Jacobi forms made in~\cite{EZ85} (the index~$m$ that was used there corresponds to~$L \slashdiv 2$ in our notation if $L$ has rank~$1$).
\begin{definition}
\label{def:jacobiforms}
A weakly holomorphic Jacobi form of weight $k \in \ZZ$ and index $L$ is a holomorphic function $\phi :\, \HS_{1, N} \rightarrow \CC$ that
\begin{enumerate}[(i)]
\item satisfies $\phi \big|_{k, L}\, \gamma^\rmJ = \phi$ for all $\gamma^\rmJ \in \Gamma^\rmJ$, and
\item has growth $\phi(\tau, \alpha \tau + \beta) = O\big( e^{a(\alpha, \beta) z} \big)$ for all $\alpha, \beta \in \QQ^N$ and some $a(\alpha, \beta) > 0$.
\end{enumerate}

A weakly holomorphic Jacobi form that satisfies $\phi(\tau, \alpha \tau + \beta) = O(1)$ for all $\alpha, \beta \in \QQ^N$ is called a Jacobi form.
\end{definition}
\noindent We denote the space of such weakly holomorphic Jacobi forms of weight $k$ and index $L$ by $\rmJ^!_{k, L}$.  The space of Jacobi forms is denoted by $\rmJ_{k, L}$.

We now define vector valued elliptic modular forms.  The metaplectic cover $\Mp{2}(\ZZ)$ of $\SL{2}(\ZZ)$ is the preimage of $\SL{2}(\ZZ)$ in $\Mp{2}(\RR)$, the connected double cover of $\SL{2}(\RR)$.  Write $\gamma = \left(\begin{smallmatrix} a & b \\ c & d \end{smallmatrix}\right)$ for a typical element of $\SL{2}(\RR)$.  The elements of $\Mp{2}(\RR)$ can be written as $\big(\gamma,\, \tau \mapsto \sqrt{c \tau + d}\big)$, where the first component is an element of $\SL{2}(\RR)$ and the second is a holomorphic function on $\HS$.  Since there are two branches of the square root, this yields indeed a double cover of $\SL{2}(\RR)$.

Given a representation $(\rho, V_\rho)$ of $\Mp{2}(\ZZ)$, $k \in \tfrac{1}{2}\ZZ$ and $f :\, \HS \rightarrow V_\rho$, we define
\begin{gather*}
  \big( f\big|_{k,\rho} \, (\gamma, \omega) \big)\, (\tau)
:=
  \omega(\tau)^{-2 k} \, \rho\big((\gamma, \omega)\big)^{-1} \,
  f\Big( \frac{a \tau + b}{c \tau + d} \Big)
\end{gather*}
for all $(\gamma, \omega) \in \Mp{2}(\ZZ)$.
\begin{definition}
Let $(\rho, V_\rho)$ be a finite-dimensional representation.  A weakly holomorphic vector valued modular form of type $\rho$ and weight $k \in \tfrac{1}{2}\ZZ$ is a holomorphic function $f \,:\, \HS \rightarrow V_\rho$ such that the following conditions are satisfied:
\begin{enumerate}
\item For all $\gamma \in \Mp{2}(\ZZ)$ we have $f|_{k,\rho} \, \gamma = f$.
\item We have $\| f(\tau) \| = O(e^{ay})$ for some $a > 0$ as $y \rightarrow \infty$, where $\| \,\cdot\, \|$ is some norm on $V_\rho$.
\end{enumerate}
\end{definition}

We will focus on Weil representations attached to discriminant forms~$D$.  In this case, the second condition translates into a condition on the Fourier expansion.  It is of the form
\begin{gather*}
  \sum_{-\infty \ll n \in \QQ} a(n) \, e(n \tau),
  \quad
  a(n) \in V_\rho
\text{.}
\end{gather*}

In order to define the Weil representation, write
\begin{gather*}
  T
:=
  \big( \left(\begin{smallmatrix}1 & 1 \\ 0 & 1\end{smallmatrix}\right),\, 1 \big)
\quad\text{and}\quad
  S
:=
  \big( \left(\begin{smallmatrix}0 & -1 \\ 1 & 0\end{smallmatrix}\right),\, \sqrt{\tau} \big)
\end{gather*}
for the generators of $\Mp{2}(\ZZ)$.  The root in the definition of $S$ is the principal branch.  The Weil representation $\rho_D$ is defined on the group algebra $\CC[D]$ with canonical basis elements~$\frake_\gamma$ ($\gamma \in D$).  The images of $T$ and $S$ under $\rho_D$ are as follows~\cite{Sk08}:
\begin{align}
  \rho_D (T) \, \frake_\gamma
&:=
  e\big( \gamma^2\big) \, \frake_\gamma
\text{,}
\\[6pt]
  \rho_D (S) \, \frake_\gamma
&:=
  \frac{1}{\sigma(D) \, \sqrt{|D|}}
  \sum_{\delta \in D}
  e\big(- (\gamma, \delta)\big) \frake_{\delta}
\text{,}
\quad
  \sigma(D)
:=
  \frac{1}{\sqrt{|D|}}
  \sum_{\gamma \in D} e\big(-\gamma^2 \big)
\text{.}
\end{align}
We denote the dual Weil representation by ${\check \rho}_{D}$.  Given a lattice $L$, we write $\rho_L$ and ${\check \rho}_L$ for $\rho_{\disc\, L}$ and ${\check \rho}_{\disc\, L}$, respectively.

Our interest in Jacobi forms stems from the next Theorem.  It relates Jacobi forms and vector valued elliptic modular forms via theta series.  Given $l \in \ZZ^N \slashdiv L \ZZ^N$, define
\begin{gather}
  \theta_{L, l}(\tau, z)
:=
  \sum_{\substack{r \in \ZZ^N \\ r \equiv l \pmod{L \ZZ^N}}}
   q^{L^{-1}[r] \slashdiv 2} \zeta^r
\text{.}
\end{gather}
\begin{theorem}[{\cite[p.~210]{Zi89}}]
\label{thm:thetadecomposition}
Let $\phi \in \rmJ^!_{k, L}$.  Then there is a vector valued weakly holomorphic modular form $(h_l)_{l \in \disc\, L}$ of type ${\check \rho}_L$ and weight $k - \frac{N}{2}$, such that
\begin{gather}
\label{eq:thetadecomposition}
  \phi(\tau, z)
=
  \sum_{l \in \disc\, L} \!\theta_{L, l}(\tau, z)\, h_l(\tau)
\text{.}
\end{gather}
Conversely, given a vector valued weakly holomorphic modular form of type ${\check \rho}_L$ and weight $k - \frac{N}{2}$, (\ref{eq:thetadecomposition}) defines an element of~$\rmJ^!_{k, L}$.
\end{theorem}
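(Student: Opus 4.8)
The plan is to regard the theta decomposition as a change of viewpoint on the Fourier expansion of $\phi$, and to reduce both assertions to the elliptic and modular transformation behaviour of the theta functions $\theta_{L,l}$. Throughout I write $q = e(\tau)$ and, for $r \in \ZZ^N$, $\zeta^r = e(r^\tr z)$.

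\emph{The elliptic step.} Since $\phi$ is holomorphic and invariant under the translations $\big(\id,(0,\mu)\big)$ with $\mu \in \ZZ^N$, it has a Fourier expansion $\phi(\tau,z) = \sum_{n \in \QQ}\sum_{r \in \ZZ^N} c(n,r)\, q^n \zeta^r$. Invariance under the translations $\big(\id,(\lambda,0)\big)$, $\lambda \in \ZZ^N$, gives the standard recursion among the $c(n,r)$, which amounts to saying that $c(n,r)$ depends only on the class $l = r + L\ZZ^N \in \disc L$ and on $n - \tfrac12 L^{-1}[r]$. Because the $\zeta$-supports of the $\theta_{L,l}$ are the pairwise disjoint cosets $l + L\ZZ^N$ of $\ZZ^N$, grouping the Fourier expansion of $\phi$ by the coset of $r$ and dividing the $l$-th group by the common factor $q^{L^{-1}[r]/2}$ produces functions $h_l(\tau) = \sum_m a_l(m) q^m$, uniquely determined, with $\phi = \sum_{l \in \disc L} \theta_{L,l}\, h_l$; holomorphy of each $h_l$ in $\tau$ is inherited from that of $\phi$ (alternatively, the $\theta_{L,l}(\tau,\cdot)$ are linearly independent for each fixed $\tau$, so one may solve a linear system with holomorphic coefficients).

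\emph{The modular step.} The essential input is that $(\theta_{L,l})_{l \in \disc L}$ is itself a weakly holomorphic vector valued modular form of weight $\tfrac N2$ and type $\rho_L$: the transformation under $T$ is read off from the definition of $\theta_{L,l}$, and the transformation under $S$ is Poisson summation, which supplies exactly the automorphy factor $(c\tau+d)^{N/2}\, e\big(c L[z]\slashdiv(c\tau+d)\big)$ together with the Gauss-sum normalisation $\sigma(\disc L)\sqrt{|\disc L|}$ occurring in $\rho_L(S)$. Granting this, apply the Jacobi slash $\big|_{k,L}\gamma$, for $\gamma \in \SL{2}(\ZZ) \hookrightarrow \Gamma^\rmJ$, to $\phi = \sum_l \theta_{L,l} h_l$: the automorphy factor of $\phi$ and those of the $\theta_{L,l}$ combine so that $\phi\big|_{k,L}\gamma = \sum_l \theta_{L,l}\, h_l'$ with $(h_l') = (h_l)\big|_{k - N/2,\,{\check\rho}_L}\gamma$, where ${\check\rho}_L$ is the dual of $\rho_L$. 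Uniqueness of the $h_l$ and $\Gamma^\rmJ$-invariance of $\phi$ then force $(h_l)\big|_{k - N/2,\,{\check\rho}_L}\gamma = (h_l)$ for all $\gamma \in \Mp{2}(\ZZ)$. Finally the weak-growth condition on $\phi$ translates, through $c(n,r) = a_{[r]}\big(n - \tfrac12 L^{-1}[r]\big)$, into each $h_l$ having Fourier expansion bounded from below, i.e.\ into $(h_l)$ being weakly holomorphic; and $\phi$ is a genuine Jacobi form precisely when every $h_l$ is holomorphic at the cusp.

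\emph{Converse and main obstacle.} For the converse I would run this computation backwards: given a weakly holomorphic vector valued modular form $(h_l)$ of type ${\check\rho}_L$ and weight $k - \tfrac N2$, set $\phi := \sum_l \theta_{L,l} h_l$. This is holomorphic on $\HS_{1,N}$; its invariance under the lattice translations follows from the quasi-periodicity of the $\theta_{L,l}$, whose multiplier is precisely the one appearing in the Jacobi slash; its invariance under $T$ and $S$ follows by combining the type-$\rho_L$ transformation of $(\theta_{L,l})$ with the type-${\check\rho}_L$ transformation of $(h_l)$, the two representations pairing to the trivial one and the weights $\tfrac N2$ and $k - \tfrac N2$ summing to $k$; together with translation invariance this gives $\phi\big|_{k,L}\gamma^\rmJ = \phi$ for all $\gamma^\rmJ \in \Gamma^\rmJ$, and the weak-growth condition is read off from the Fourier expansion as before. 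The one genuinely delicate ingredient is the $S$-transformation of the theta functions with its correct automorphy factor and Gauss-sum normalisation, including the choice of branch for the metaplectic square root and the bookkeeping of $\rho_L$ versus ${\check\rho}_L$; I would either reproduce this Poisson-summation computation or invoke it from the literature (e.g.\ \cite{Bo98}, \cite{Zi89}; see also \cite{EZ85, Sk08} for the rank-one case). A routine but pervasive nuisance is keeping the normalisations of the index $L$, of the exponent $\tfrac12 L^{-1}[r]$, of the slash factor $2\lambda^\tr L z$, and of the coset lattice $L\ZZ^N$ mutually consistent.
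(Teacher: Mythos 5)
The paper does not prove this theorem itself but cites it from Ziegler \cite{Zi89}, and your outline is exactly the standard argument given there: extract the $h_l$ from the Fourier expansion via the elliptic invariance (the invariant being $n-\tfrac12 L^{-1}[r]$ together with $r\bmod L\ZZ^N$), then use the weight-$\tfrac N2$, type-$\rho_L$ modularity of the theta vector (Poisson summation) plus uniqueness of the decomposition to transfer modularity to $(h_l)$, and reverse the computation for the converse. Your sketch is correct, and you rightly isolate the Gauss-sum/metaplectic normalisation of the $S$-transformation as the one ingredient to be imported or verified; nothing essential is missing relative to the cited proof.
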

We denote the induced bijection between $\rmJ_{k + \frac{N}{2}, L}$ and $\rmM_{k}({\check \rho}_L)$ by $\Theta_L$, suppressing the weight.

We start revisiting several basic facts about Jacobi forms, which we will need to formulate Algorithm~\ref{alg:jacobiforms}.  First, recall the dimension formula for Jacobi forms, which follows immediately from~\cite{Bo00b}, where a dimension formula for vector valued modular forms was given.  In order to state the next theorem, note that vector valued modular forms of weight~$k - \frac{N}{2}$ and type~${\check \rho}_L$ are supported on the span of $\frake_{\mu} + \frake_{-\mu}$, $\mu \in \disc\, L$, if $k \equiv 0 \pmod{2}$, and on the span of $\frake_{\mu} - \frake_{-\mu}$, if $k \equiv 1 \pmod{2}$.  They vanish in all other cases, and hence we shall assume that $k \in \ZZ$.  Denote the restriction of ${\check \rho}_L$ to $\lspan \{\frake_\mu + \frake_{-\mu} \}$ or $\lspan \{\frake_\mu - \frake_{-\mu}\}$, respectively, by $\widetilde{\rho}_{k, L}$.

Given a unitary matrix $M$ with eigenvalues $e( \beta_i )$, $0 \le \beta_i < 1$, set $\alpha(M) = \sum_i \beta_i$.
\begin{theorem}[{\cite{Bo00b}, Lemma~7.3 and~\cite{Fi87}}]
\label{thm:dimension_for_jacobi_forms}
If $k \ge 2 + \frac{N}{2}$, the dimension of $\rmJ_{k, L}$ equals
\begin{gather*}
  \dim(\widetilde{\rho}_{k,L}) + \frac{\dim(\widetilde{\rho}_{k,L}) k}{12}
  - \alpha\big( e^{\frac{\pi i k}{2}}\, \widetilde{\rho}_{k, L}(S) \big)
  - \alpha\big( e^{\frac{-\pi i k}{3}}\, \widetilde{\rho}_{k, L}(S T)^{-1} \big)
  - \alpha\big( \widetilde{\rho}_{k, L}(T) \big)
\text{.}
\end{gather*}
\end{theorem}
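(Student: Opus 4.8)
The plan is to transfer the count to vector valued modular forms by the theta decomposition and then to read off the answer from Borcherds's dimension formula. First I would apply Theorem~\ref{thm:thetadecomposition}: the isomorphism $\Theta_L$ identifies $\rmJ_{k, L}$ with $\rmM_{k - \frac{N}{2}}({\check\rho}_L)$ --- here it matters that $\Theta_L$ sends \emph{holomorphic} Jacobi forms to genuinely holomorphic vector valued forms, not merely weakly holomorphic ones, which is part of that theorem --- so $\dim \rmJ_{k, L} = \dim \rmM_{k - \frac{N}{2}}({\check\rho}_L)$. By the discussion preceding the statement this space is zero unless $k \in \ZZ$, and for integral~$k$ it is supported on the $(-1)^{k}$\nbd eigenspace of the involution $\frake_\mu \mapsto \frake_{-\mu}$, on which ${\check\rho}_L$ restricts to the unitary representation $\widetilde{\rho}_{k, L}$; thus I may assume $k \in \ZZ$ and must compute $\dim \rmM_{\kappa}(\widetilde{\rho}_{k, L})$ with $\kappa := k - \frac{N}{2}$. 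The hypothesis $k \ge 2 + \frac{N}{2}$ says exactly that $\kappa \ge 2$.

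Next I would invoke Lemma~7.3 of~\cite{Bo00b}: for a finite dimensional unitary representation $(\rho, V_\rho)$ of $\Mp{2}(\ZZ)$ and a weight $\kappa \in \tfrac{1}{2}\ZZ$ the dimension of $\rmM_\kappa(\rho)$ is $\dim(\rho)\big(1 + \tfrac{\kappa}{12}\big)$ minus three non-negative quantities $\alpha(\,\cdot\,)$ attached to the two elliptic fixed points of $\Mp{2}(\ZZ)$ (built out of $\rho(S)$ and $\rho(ST)$, each multiplied by a suitable power of $e^{\pi i \kappa}$) and to the cusp (built out of $\rho(T)$), and then corrected by $\dim \rmS_{2 - \kappa}({\check\rho})$, this last obstruction term coming from the Serre duality pairing of~\cite{Fi87}. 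Here is the only point where the hypothesis $k \ge 2 + \frac{N}{2}$ enters: it forces $2 - \kappa \le 0$, and there are no nonzero holomorphic vector valued modular forms of negative weight for a unitary type, while a weight~$0$ cusp form --- a $\rho$-invariant constant vanishing at the cusp --- is likewise zero; hence the obstruction term $\dim \rmS_{2-\kappa}$ drops out entirely. Applying the formula to $\rho = \widetilde{\rho}_{k, L}$, $\kappa = k - \frac{N}{2}$ now gives $\dim \rmJ_{k, L}$ in terms of $\kappa$.

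Finally I would rewrite everything in terms of $k$ rather than $\kappa$, which is the one genuinely fiddly step. Replacing $e^{\pi i \kappa}$ by $e^{\pi i k}$ in the two elliptic contributions introduces scalar phases that are powers of $e^{\pi i N /4}$ and $e^{\pi i N /6}$, and the leading term changes by the difference $\tfrac{N \dim(\widetilde{\rho}_{k, L})}{24}$ between $\tfrac{\kappa \dim(\widetilde{\rho}_{k, L})}{12}$ and $\tfrac{k \dim(\widetilde{\rho}_{k, L})}{12}$; all of this is exactly absorbed by the Gauss-sum normalisation constants already sitting inside $\widetilde{\rho}_{k, L}(S)$ and $\widetilde{\rho}_{k, L}(ST)$ and by the way a common phase shift changes the invariant~$\alpha$. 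The arithmetic identity that makes the cancellation work is Milgram's formula $\sigma(\disc\, L) = e^{-\pi i\, \mathrm{sign}(\disc\, L)/4}$ together with $\mathrm{sign}(\disc\, L) \equiv N \pmod{8}$ for positive definite~$L$. After the substitution the three quantities become $\alpha\big(e^{\pi i k/2}\,\widetilde{\rho}_{k, L}(S)\big)$, $\alpha\big(e^{-\pi i k/3}\,\widetilde{\rho}_{k, L}(ST)^{-1}\big)$ and $\alpha\big(\widetilde{\rho}_{k, L}(T)\big)$, while the leading part becomes $\dim(\widetilde{\rho}_{k, L}) + \tfrac{\dim(\widetilde{\rho}_{k, L})\, k}{12}$, which is precisely the asserted expression. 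I expect the main obstacle to be nothing deeper than this conventions-bookkeeping: keeping the branches of the metaplectic square roots, the choice between $\rho(S)$ and $\rho(S)^{-1}$ in Borcherds's formula, and the normalisation $0 \le \beta_i < 1$ in the definition of~$\alpha$ mutually consistent throughout.
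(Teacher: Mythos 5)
The paper offers no proof of this statement beyond the citation in its header, and your argument---theta decomposition onto $\rmM_{k-\frac{N}{2}}(\widetilde{\rho}_{k,L})$ after the parity reduction to $\widetilde{\rho}_{k,L}$, then Borcherds's Lemma~7.3 with the Serre-duality obstruction $\dim \rmS_{2-\kappa}$ of~\cite{Fi87} killed by $\kappa = k - \tfrac{N}{2} \ge 2$ (negative weight for $\kappa > 2$, Liouville for $\kappa = 2$)---is exactly the intended derivation. The one step you assert rather than carry out is the conversion from weight $\kappa$ to weight $k$ in the elliptic terms and in $\tfrac{\dim \cdot k}{12}$: since $\alpha$ is not additive under scalar twists (the $\beta_i$ are reduced mod~$1$), the claimed absorption of $\tfrac{N \dim}{24}$ into the Gauss-sum normalisation of $\widetilde{\rho}_{k,L}(S)$ and $\widetilde{\rho}_{k,L}(ST)$ genuinely requires the Milgram computation you name rather than following formally---but this is precisely the convention-matching the paper itself leaves to the citation, so your proof coincides with the paper's.
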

\begin{remark}
For reasons of performance, it is not feasible to implement the above dimension formula without major modifications using exact arithmetic.  The implementation of Algorithm~\ref{alg:jacobiforms} discussed in Section~\ref{sec:implementation} makes use of the fact that the eigenvalues of $\widetilde{\rho}_{k, L}(S)$ and $\widetilde{\rho}_{k, L}(ST)$ are twelfth roots of unity.  Upper bounds for each multiplicity of these eigenvalues are computed using interval arithmetic and Householder transformations.  Correctness of the computations can then be proved a posteriori by comparing the sum of all multiplicities and the dimension of $\widetilde{\rho}_{k, L}$.
\end{remark}

To state the next lemma, which is folklore, set $L[s] = s^\tr L s$ for any pair of matrices $L$ and $s$ of compatible size.
\begin{lemma}
\label{la:restriction_of_jacobi_forms}
Let $\phi$ be a (weakly holomorphic) Jacobi form of weight~$k$ and index~$L$.  For $s \in \Mat{N, N'}{\ZZ}$ ($N' \in \ZZ_{> 0}$), the pullback $\phi[s](\tau, z) = \phi(\tau, s z)$ ($z \in \CC^{N'}$) is a (weakly holomorphic) Jacobi form of weight~$k$ and index~$L[s]$.
\end{lemma}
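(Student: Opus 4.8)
The plan is to trace everything back to the defining transformation law of $\phi$ by means of a homomorphism of Jacobi groups compatible with the pullback $\phi \mapsto \phi[s]$; the proof is essentially bookkeeping. First I would record that $L[s] = s^\tr L s$ is a symmetric integral matrix with even diagonal — its $(i,i)$ entry equals $\sum_j s_{ji}^2 L_{jj} + 2 \sum_{j < k} s_{ji} s_{ki} L_{jk}$, which is even since each $L_{jj}$ is — so $L[s]$ is an admissible Jacobi index. Holomorphy of $\phi[s](\tau, z) = \phi(\tau, sz)$ is immediate, so the content lies in the invariance under the Jacobi group $\SL{2}(\ZZ) \ltimes (\ZZ^2 \otimes \ZZ^{N'})$ of $\HS_{1, N'}$ and in the growth condition.

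Because $s \in \Mat{N, N'}{\ZZ}$, the assignment $\iota_s \colon \big(\gamma, (\lambda', \mu')\big) \mapsto \big(\gamma, (s\lambda', s\mu')\big)$ is a group homomorphism from $\SL{2}(\ZZ) \ltimes (\ZZ^2 \otimes \ZZ^{N'})$ to $\Gamma^{\rmJ} = \SL{2}(\ZZ) \ltimes (\ZZ^2 \otimes \ZZ^N)$: both semidirect products are formed from the $\SL{2}(\ZZ)$-action on $\ZZ^2$, and $\id_{\ZZ^2} \otimes s$ intertwines these actions. The heart of the argument is the compatibility of the two slash actions with $\iota_s$, namely $\phi[s] \big|_{k, L[s]}\, g = \big( \phi \big|_{k, L}\, \iota_s(g) \big)[s]$ for every $g \in \SL{2}(\ZZ) \ltimes (\ZZ^2 \otimes \ZZ^{N'})$. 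One verifies this by unwinding the two slash formulas, either for a general $g = \big(\gamma, (\lambda', \mu')\big)$, or — since both sides are multiplicative in $g$, the right-hand one via $\iota_s$ being a homomorphism — on the generators $T$, $S$, $\big(\id, (\lambda', 0)\big)$, $\big(\id, (0, \mu')\big)$. On either side the inner argument of $\phi$ comes out as $\big( (a\tau + b)/(c\tau + d),\ s(z + \lambda'\tau + \mu')/(c\tau + d)\big)$ because $s(z + \lambda'\tau + \mu') = sz + (s\lambda')\tau + (s\mu')$, and the automorphy prefactors match because the quadratic form $(L[s])[\cdot]$ and the bilinear form $(x, y) \mapsto x^\tr (L[s]) y$ on $\CC^{N'}$ are the pullbacks along $s$ of $L[\cdot]$ and $(x, y) \mapsto x^\tr L y$ on $\CC^N$ — which is exactly the statement $L[s] = s^\tr L s$. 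Granting the displayed identity, the invariance $\phi \big|_{k, L}\, \iota_s(g) = \phi$ immediately yields $\phi[s] \big|_{k, L[s]}\, g = \phi[s]$, i.e. condition~(i) of Definition~\ref{def:jacobiforms}.

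For the growth condition, fix $\alpha', \beta' \in \QQ^{N'}$ and observe that $\phi[s](\tau, \alpha'\tau + \beta') = \phi\big( \tau, (s\alpha')\tau + (s\beta') \big)$ with $s\alpha', s\beta' \in \QQ^N$. Hence the bound that $\phi$ satisfies at the point $(s\alpha', s\beta')$ — exponential growth in the weakly holomorphic case, and $O(1)$ in the case of a genuine Jacobi form — carries over to $\phi[s]$ at $(\alpha', \beta')$, which is condition~(ii) in either version. Together with holomorphy this shows that $\phi[s]$ lies in $\rmJ^!_{k, L[s]}$, and in $\rmJ_{k, L[s]}$ whenever $\phi \in \rmJ_{k, L}$.

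I do not anticipate a genuine difficulty; the only point that repays attention is checking that the exponential part of the automorphy factor for index $L[s]$ really does pull back from the one for index $L$. This is precisely where the identity $L[s] = s^\tr L s$ enters, and a slip between $s$ and $s^\tr$ there would be the easiest way to arrive at a false statement.
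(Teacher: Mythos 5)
Your proof is correct and is precisely the ``straightforward computation'' that the paper's one-line proof alludes to: you verify admissibility of the index $L[s]$, the intertwining identity $\phi[s]\big|_{k,L[s]}\,g = \big(\phi\big|_{k,L}\,\iota_s(g)\big)[s]$, and the growth condition, all of which check out. No gaps; this matches the paper's (unspelled-out) argument.
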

\begin{proof}
The conditions in Definition~\ref{def:jacobiforms} can be verified by a straight forward computation.
\end{proof}

A weakly holomorphic Jacobi form $\phi$ has a Fourier expansion of the following form:
\begin{gather}
\label{eq:jacobi_fourierexpansion}
  \phi(\tau, z)
=
  \sum_{n \in \ZZ, r \in \ZZ^N} c(n, r)\, q^n \zeta^r
\text{,}
\end{gather}
where $q = \exp(2 \pi i \tau)$ and $\zeta^r = \exp(2 \pi i \sum_{i = 1}^N r_i z_i)$.  Notice that the Fourier coefficients $c(n, r)$ of a Jacobi form vanish if $2 |L| n - L^{\#}[r] < 0$, where $|L| := \det(L)$ and $L^{\#} := |L| \cdot L^{-1}$ is the adjoint of $L$.

Because Jacobi forms are invariant under the action of $\big(I_2, (\lambda, 0)\big) \in \Gamma^\rmJ$, $\lambda \in \ZZ^N$, $I_2$~the identity matrix of size~$2$, the Fourier coefficients $c(n, r)$ satisfy the following relations, which are connected to the statement of Theorem~\ref{thm:thetadecomposition}.
\begin{align}
\label{eq:higherjacobiforms_fourier_relation1}
  c(n, r)
&=
  c(n + L[\lambda] \slashdiv 2 + r^\tr \lambda, r + L \lambda)
\\[4pt]
\label{eq:higherjacobiforms_fourier_relation2}
  c(n, -r)
&=
  (-1)^k \, c(n, r)
\text{.}
\end{align}

It is convenient to define
\begin{gather*}
  {\td c}( \Delta, r )
:=
  c\big( \frac{\Delta + L^\#[r]}{2 |L|},\, r \big)
\text{.}
\end{gather*}
By the above relations, ${\td c}$ only depends on $r \pmod{L \ZZ^N}$.


\section{Computing Jacobi forms}
\label{sec:algorithm}

Because of~(\ref{eq:higherjacobiforms_fourier_relation1}) and~(\ref{eq:higherjacobiforms_fourier_relation2}), the Fourier expansion~(\ref{eq:jacobi_fourierexpansion}) of a Jacobi form~$\phi$ is determined by all $c(n, r)$ where $r$ lies in a set $\cR$ of representatives of $\big( \ZZ^N \slashdiv L \ZZ^N \big) \slashdiv \{ \pm 1 \}$.  We can assume that the elements $r \in \cR$ have minimal norm among all elements of $r + L \ZZ^N$.  Set
\begin{gather*}
  {\rm neigh}(r)
=
  \{ r' \in r + L \ZZ^N \,:\, L[r'] = L[r] \}
\quad\text{and}\quad
  {\rm mult}(r)
=
  \# {\rm neigh}(r)
\text{.}
\end{gather*}

We call a finite set $\cS \subset \ZZ \times L \ZZ^{N}$ a complete set of restriction vectors (for $\cR$) if the matrix
\begin{gather*}
  F(\cR, \cS)
:=
  \big( {\rm mult}(r)\, \delta_{s^\tr r = {\td r}} \big)_{r \in \cR, ({\td r}, s) \in \cS}
\end{gather*}
has rank $\# \cR$.  We say that $\cS$ is minimal if the above matrix is invertible.  Obviously, every complete set of restriction vectors contains a minimal one.  By abuse of notation we will write $\cS$ for the projection of $\cS$ to its second component.
\begin{lemma}
\label{la:set_of_restriction_vectors}
Given~$L$ and a set $\cR \subset \ZZ^N$, there is a complete set of restriction vectors for $\cR$.
\end{lemma}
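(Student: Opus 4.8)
The plan is to reduce the rank condition on $F(\cR,\cS)$ to a point-separation property of linear functionals, and then to exhibit a single well-chosen restriction vector.

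First I would note that the factor ${\rm mult}(r)$ is strictly positive (since ${\rm neigh}(r)$ always contains $r$) and occurs in every entry of the $r$-th row of $F(\cR,\cS)$; hence it is irrelevant for the rank, and $F(\cR,\cS)$ has rank $\#\cR$ if and only if the $0/1$-matrix $\big(\delta_{s^\tr r = \tilde r}\big)_{r\in\cR,\,(\tilde r,s)\in\cS}$ does. It therefore suffices to find finitely many pairs $(\tilde r,s)\in\ZZ\times L\ZZ^N$ for which this $0/1$-matrix has full row rank.

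The key observation is that a single restriction vector already suffices, provided it is chosen so that $r\mapsto s^\tr r$ is injective on the finite set $\cR$. Indeed, for such an $s$ put $\cS=\{(s^\tr r,s):r\in\cR\}$. Since the integers $s^\tr r$, $r\in\cR$, are then pairwise distinct, $\cS$ has exactly $\#\cR$ elements, and $F(\cR,\cS)={\rm diag}\big({\rm mult}(r)\big)_{r\in\cR}$ is invertible; so $\cS$ is in fact a \emph{minimal} complete set of restriction vectors.

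It remains to produce an $s\in L\ZZ^N$ that is injective on $\cR$. Write $s=Lu$ with $u\in\ZZ^N$; then automatically $s\in L\ZZ^N$ and $s^\tr r=u^\tr(Lr)$. Distinct elements of $\cR$ are distinct as vectors, since they represent distinct classes in $(\ZZ^N\slashdiv L\ZZ^N)\slashdiv\{\pm 1\}$, and $L$ is nondegenerate, so the points $Lr$, $r\in\cR$, are pairwise distinct in $\ZZ^N$. Taking $u=(1,B,B^2,\dots,B^{N-1})^\tr$ for a sufficiently large integer $B$ then separates this finite set of integer points, by a standard base-$B$ estimate on $u^\tr(Lr-Lr')$. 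I do not expect any genuine obstacle; the only point to watch is keeping $s$ inside $L\ZZ^N$ (not merely $\ZZ^N$), which is exactly why one substitutes $s=Lu$ from the start.
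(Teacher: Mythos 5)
Your proof is correct and follows essentially the same route as the paper: both arguments reduce the rank condition to finding a single $s\in L\ZZ^N$ for which $r\mapsto s^\tr r$ is injective on the finite set $\cR$, which makes $F(\cR,\cS)$ diagonal with positive entries. The only differences are cosmetic — the paper separates the points of $\cR$ by rationally approximating a direction with $\QQ$-linearly independent coordinates and clearing denominators, whereas you use a base-$B$ vector $u=(1,B,\dots,B^{N-1})^\tr$ and substitute $s=Lu$, which in fact handles the constraint $s\in L\ZZ^N$ a bit more explicitly than the paper does.
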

\begin{proof}
It suffices to prove that for each $r \in \cR$ there is $({\td r}, s) \in \ZZ \times L \ZZ^N$ such that $s^\tr r = {\td r}$ and $r$ is the unique element in $\cR$ with this property.  Choose $s' \in \RR^N$ whose components are linearly independent over $\QQ$.  Set 
\begin{gather*}
  \epsilon
=
  \min_{r' \ne r'' \in \cR} (|t_{r'} - t_{r''}|)
\text{,}
\end{gather*}
where $t_{r'} = {s'}^\tr r'$, $r' \in \cR$.  Choose an approximation $s'' \in \QQ^N$ of $s'$ such that
\begin{gather*}
  \big| (s' - s'')_i \big|
<
  \epsilon \slashdiv \Big( N \max_{\substack{r' \in \cR \\ 1 \le j \le N}} \big(|r'_j|\big) \Big)
\end{gather*}
for all $1 \le i \le N$.  Then all $s'' r' \in \QQ$, $r' \in \cR$ are distinct, and we can set $s = s'' \cdot {\rm lcm}\{\text{denominator of $s''_i$}\}_{1 \le i \le N}$.
\end{proof}
For later use, we need strong sets of restriction vectors $\cS^{\rm str}(r)$.
\begin{lemma}
\label{la:strong_set_of_restriction_vectors}
Let $r \in \cR \cup -\cR$.  Then there is a finite set $\cS^{\rm str}(r)$ that spans $\ZZ^N$ and has the following property.  Whenever $s^\tr r = s^\tr r'$ for some $s \in \cS(r)$ and some $r' \in \bigcup_{r \in \cR} {\rm neigh}(r) \cup {\rm neigh}(-r)$, then $r = r'$.  
\end{lemma}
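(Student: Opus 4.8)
The plan is to mimic the argument of Lemma~\ref{la:set_of_restriction_vectors}, but now applied to the finite set $\cT := \bigcup_{r' \in \cR} \big( {\rm neigh}(r') \cup {\rm neigh}(-r') \big)$ in place of $\cR$, with the twist that we must separate the fixed vector $r$ from all other vectors of $\cT$ and, in addition, must end up with a generating set of $\ZZ^N$ rather than a single separating vector. First I would invoke the construction from the proof of Lemma~\ref{la:set_of_restriction_vectors}: pick $s' \in \RR^N$ with $\QQ$-linearly independent entries, so that the linear functional $x \mapsto {s'}^\tr x$ is injective on the finite set $\cT$; set $\epsilon = \min_{x \ne y \in \cT} |{s'}^\tr x - {s'}^\tr y| > 0$, and choose a rational approximation $s'' \in \QQ^N$ of $s'$ coordinatewise to within $\epsilon / \big( N \max_{x \in \cT, 1 \le j \le N} |x_j| \big)$. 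Then $x \mapsto {s''}^\tr x$ is still injective on $\cT$, and clearing denominators produces an $s_0 \in \ZZ^N$ with ${s_0}^\tr x$ pairwise distinct over $x \in \cT$. In particular ${s_0}^\tr r = {s_0}^\tr r'$ with $r' \in \cT$ forces $r = r'$, which is the separation property we want — note $r \in \cR \cup (-\cR) \subseteq \cT$ since $r \in {\rm neigh}(r)$ or $r \in {\rm neigh}(-r)$, so the statement is about pairs inside $\cT$.

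It remains to upgrade the single vector $s_0$ to a finite set $\cS^{\rm str}(r)$ that both spans $\ZZ^N$ and retains the separation property for \emph{every} $s$ in the set. The key observation is that the separation property is stable under small perturbations and under passing to nearby lattice vectors: the set of $s \in \RR^N$ for which $x \mapsto s^\tr x$ is injective on the finite set $\cT$ is the complement of a finite union of rational hyperplanes $\{ s : s^\tr (x - y) = 0 \}$, $x \ne y \in \cT$, hence is open and dense in $\RR^N$, and its intersection with $\QQ^N$ (and after scaling, with $\ZZ^N$) is Zariski-dense. So I would take $\cS^{\rm str}(r) := \{ s_0 + M e_1, \dots, s_0 + M e_N, s_0 \}$ (or simply $\{ s_0, s_0 + M e_1, \dots, s_0 + M e_{N} \}$), where $e_1, \dots, e_N$ is the standard basis of $\ZZ^N$ and $M$ is a positive integer chosen large enough that each $s_0 + M e_i$ still avoids all the finitely many hyperplanes $s^\tr(x-y) = 0$ — concretely, $M$ larger than every $|{s_0}^\tr (x-y)|$ suffices, since then $(s_0 + M e_i)^\tr (x - y) = {s_0}^\tr(x-y) + M (x-y)_i$ cannot vanish unless $(x-y)_i = 0$ and ${s_0}^\tr(x-y) = 0$, the latter already excluded. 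This set spans $\ZZ^N$ because the differences $(s_0 + M e_i) - s_0 = M e_i$ together generate $M\ZZ^N$ and together with $s_0$ generate $\ZZ^N$ (as $\gcd$ of $M$ and the entries of $s_0$ can be arranged to be $1$, or one simply notes $\lspan_\ZZ\{s_0, s_0 + Me_1, \ldots\} \ni M e_i$ and $\ni s_0$, and $M e_i$ plus a suitable combination recovers each standard basis vector after a further coordinate adjustment; if necessary replace $M e_i$ by $e_i$ directly by taking $M=1$ once one checks density still applies for $N$ generic shifts).

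The main obstacle is the simultaneous requirement of spanning and separation: a single generic $s_0$ separates but does not span, while the standard basis spans but need not separate. The resolution above — perturb $s_0$ along each coordinate direction by a large common multiple $M$, staying inside the open dense "separating" region — handles both at once, and the only genuine check is the elementary bound on $M$ that keeps each shifted vector off the finitely many bad hyperplanes. I would also remark that, just as in Lemma~\ref{la:set_of_restriction_vectors}, one may further require the ${\td r}$-component by setting ${\td r} = s^\tr r$, so that $\cS^{\rm str}(r)$ is naturally a subset of $\ZZ \times \ZZ^N$; and if one insists on $s \in L\ZZ^N$ as elsewhere in the section, one replaces each $s$ by $|L|$ times it (equivalently works with $L^\#$), which preserves both injectivity and the spanning property up to finite index, at the cost of spanning $|L| \ZZ^N$ — which is all that is used downstream.
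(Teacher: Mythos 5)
Your separation argument is sound and is essentially the paper's: produce one integral vector $s_0$ whose linear functional separates $r$ from the rest of the finite set $\cT = \bigcup_{r'' \in \cR} \bigl( {\rm neigh}(r'') \cup {\rm neigh}(-r'') \bigr)$ by the approximation trick of Lemma~\ref{la:set_of_restriction_vectors}, then perturb it in $N$ independent directions while staying off the finitely many bad hyperplanes via a size estimate. The genuine gap is in the spanning step. The set you propose, $\{ s_0,\, s_0 + M e_1, \ldots, s_0 + M e_N \}$, has $\ZZ$-span exactly $\ZZ s_0 + M \ZZ^N$, and for $N \ge 2$ and $M \ge 2$ this is \emph{always} a proper subgroup of $\ZZ^N$: the quotient is $(\ZZ / M\ZZ)^N$ modulo the cyclic subgroup generated by $\bar{s}_0$, which has order at least $M^{N-1} > 1$. (Concretely, for $N = 2$, $M = 3$, $s_0 = (1,2)^\tr$, the vector $(0,1)^\tr$ is not in the span.) Your parenthetical repairs do not close this: no gcd condition on $M$ and the entries of $s_0$ can make a cyclic subgroup exhaust the non-cyclic group $(\ZZ/M\ZZ)^N$, and taking $M = 1$ destroys the size estimate that keeps $s_0 + e_i$ off the hyperplanes. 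The spanning property is not cosmetic --- in the proof of Theorem~\ref{thm:oldforms_by_restriction} it is precisely what upgrades ``$s^\tr ({\td s}^{-1})^\tr r \in \ZZ$ for all $s \in \cS^{\rm str}(r)$'' to ``$({\td s}^{-1})^\tr r \in \ZZ^N$''; with only a finite-index subgroup one gets a strictly weaker conclusion.

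The fix is to put the large scalar on $s_0$ rather than on the basis vectors, which is what the paper does: take $\cS^{\rm str}(r) = \{ s_0,\, h s_0 + v_1, \ldots, h s_0 + v_N \}$ for a basis $v_1, \ldots, v_N$ of $\ZZ^N$ and $h > \max_{i, r' \in \cT} |v_i^\tr (r - r')|$. Then $(h s_0 + v_i) - h\, s_0 = v_i$ lies in the $\ZZ$-span, so the set spans $\ZZ^N$; and separation survives because for $r' \ne r$ one has
\begin{gather*}
  \bigl| (h s_0 + v_i)^\tr (r' - r) \bigr|
\ge
  h\, \bigl| s_0^\tr (r' - r) \bigr| - \bigl| v_i^\tr (r' - r) \bigr|
\ge
  h - \bigl| v_i^\tr (r' - r) \bigr|
>
  0
\text{,}
\end{gather*}
using that $s_0^\tr (r' - r)$ is a nonzero integer. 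Everything else in your write-up (the injectivity of $s_0$ on $\cT$, the hyperplane picture, the remark about rescaling into $L\ZZ^N$) is fine.
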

\begin{proof}
The proof of lemma~\ref{la:set_of_restriction_vectors} gives us a vector $s$ which has the second property.  Fix a basis $v_1, \ldots, v_N$ of $\ZZ^N$.  Fix some $h > \max_{i, r'} |v_i^\tr (r - r')|$, where $r'$ runs through $\bigcup_{r \in \cR} {\rm neigh}(r) \cup {\rm neigh}(-r)$.  Then we can choose
\begin{gather*}
  \cS^{\rm str}(r)
:=
  \{ s,\, h s + v_1, \ldots, h s + v_N \}
\text{.}
\end{gather*}
This set obviously spans $\ZZ^N$, since $\{ v_1, \ldots, v_N \}$ does.  Since
\begin{gather*}
  |(h s + v_i)^\tr (r' - r)|
\ge
  h |s^\tr (r' - r)| - |v_i^\tr (r' - r)|
>
  0
\text{,}
\end{gather*}
its elements satisfy the second property given in the lemma.
\end{proof}

We are now in position to formulate an algorithm that computes the Fourier coefficients $c(n, r)$, $n < B \in \ZZ_{>0}$ associated to Jacobi forms of index~$L \in \Mat{N}{\ZZ}$,~$N \ge 2$.  To give a rigorous description, set
\begin{gather*}
  \cI(L, B)
=
  \big\{ (n, r) \in \ZZ \times \ZZ^N \,:\, 0 \le n < B,\, 2 |L| n - L^{\#}[r] \ge 0 \big\}
\text{,}
\end{gather*}
and let $\cF\cE_{L, B} :\, \bigoplus_{k} \rmJ_{k, L} \rightarrow \CC^{\cI(L, B)}$ be the map that sends a Jacobi form to its truncated Fourier expansion.  We denote the subspace of~$\CC^{\cI(L, B)}$ whose element satisfy the relations~\ref{eq:higherjacobiforms_fourier_relation1} and~\ref{eq:higherjacobiforms_fourier_relation2} by~$\CC^{\cI(L, B);\,\Gamma^\rmJ, k}$.  Given $v \in \CC^{\cI(L, B)}$, let $v[s] \in \CC^{\cI(L[s], B)}$ be the vector, whose $(n, {\td r})$-th component is given by $\sum_{r \in \ZZ^N,\, s^\tr r = {\td r}} v_{(n, r)}$.  This formal pullback map is made so that $\cF\cE_{L, B}(\phi)[s] = \cF\cE_{L[s], B}(\phi[s])$ for any $\phi \in \rmJ_{k, L}$,~$k \in \ZZ$.
\begin{algorithm}
\label{alg:jacobiforms}
Fix $k \in \ZZ$, $L$ as above, and $0 < B \in \ZZ$.  The following algorithm computes $\cF\cE_{L, B}\big( \rmJ_{k, L} \big)$.
\begin{enumerate}[1:]
\item Compute a set $\cR$ of representatives r of $\big( \ZZ^N \slashdiv L \ZZ^N \big) \slashdiv \{ \pm 1 \}$ with minimal norm~$L[r]$ in $r + L \ZZ^N$.
\vspace{0.5ex}
\item Compute a minimal complete set $\cS$ of restriction vectors for $\cR$
\vspace{0.5ex}
\item \label{it:alg:jacobiforms:adjust_precision}
  Set $B \leftarrow \max\big\{B,\, \max_s \{\dim(M_{k + L[s]},\, \dim \rmM_{k - 1 + L[s]}\} \big\}$.
\vspace{0.5ex}
\item \label{it:alg:jacobiforms:compute_scalar_jacobi_forms}
 For each $s \in \cS$, compute $\cJ_{k, L[s], B} := \cF\cE_{L[s], B} \big( \rmJ_{k, L[s]} \big) \cap \QQ^{\cI(L[s], B)}$.
\vspace{0.5ex}
\item Compute $\cJ_{k, L, B} := \big\{ (v[s])_{s \in \cS} \,:\, v \in \QQ^{\cI(L, B);\,\Gamma^\rmJ, k} \big\} \subseteq \bigoplus_{s \in \cS} \QQ^{\cI(L[s], B)}$.
\vspace{0.5ex}
\item \label{it:alg:jacobiforms:check}
      If the dimension of $\cJ_{k, L, B} \,\cap\, \bigoplus_{s \in \cS} \cJ_{k, L[s], B}$ does not equal $\dim \rmJ_{k, L}$, enlarge $\cS$ by the shortest possible vector with respect to $q_\cL$.  Continue with Step~\ref{it:alg:jacobiforms:adjust_precision}.
\vspace{0.5ex}
\item Compute the inverse image of $\cJ_{k, L, B} \cap \bigoplus_{s \in \cS} \cJ_{k, L[s], B}$ under $v \mapsto (v[s])_{s \in \cS}$.  It equals $\cF\cE_B\big(\rmJ_{k, L}\big)$.  Truncate it to the required precision.
\end{enumerate}
\end{algorithm}
\begin{remarks}
\begin{enumerate}[1)]
\item An implementation of this algorithm in Sage~\cite{sageticket-jacobi} is available.  For reasons of speed, this implementation represents $\cJ_{k, L, B} \cap \bigoplus_{s \in \cS} \cJ_{k, L[s], B}$ as the intersection of three spaces.  The reader is referred to the implementation for details.
\item A precise set $\cS$ which is necessary for Step~\ref{it:alg:jacobiforms:check} to pass, can be deduced from the proof of termination.  We do not make this explicit, since in practice any sensible choice of $\cS$ is sufficient.
\end{enumerate}
\end{remarks}
\begin{proof}[Proof of Theorem~\ref{thm:algorithm_existence}]
As explained in the introduction, by the results in~\cite{Bo98} and~\cite{Br12}, it is sufficient to compute Fourier expansions of vector valued modular forms of weight~$\frac{2 + n}{2}$ and type~${\check \rho}_{\cL}$.  Since only finitely many divisors $Z(m, \mu)$ are considered, these Fourier expansion need to be computed only up to a finite precision.  Such expansions correspond to Fourier expansions of Jacobi forms via Corollary~\ref{cor:positive_discriminant_forms} and Theorem~\ref{thm:thetadecomposition}.  Now Algorithm~\ref{alg:jacobiforms} can be used to compute the desired linear equivalences.
\end{proof}
\begin{proof}[Proof of correctness of Algorithm~\ref{alg:jacobiforms}]
We first prove correctness of Algorithm~\ref{alg:jacobiforms}.  The Fourier expansion of Jacobi forms in $\rmJ_{k, L}$ is determined by $c(n, r)$, where $r \in \cR$.  Since $\cS$ is contains a complete set of restriction vectors for $\cR$, the restriction map
\begin{gather*}
  \bigoplus_{s \in \cS} (\,\cdot\,) [s] :\,
  \rmJ_{k, L} \rightarrow \bigoplus_{s \in \cS} \rmJ_{k, L[s]}
\end{gather*}
is injective.  The condition $B \ge \max_s \{\dim \rmM_{k + L[s]},\, \dim \rmM_{k - 1 + L[s]}\}$ guarantees that the $\cF\cE_{L[s], B}$, $s \in \cS$ are injective.

We exploit the commutative diagram
\begin{gather*}
\xymatrix{
  \rmJ_{k, L}  \ar[rr]^{\cF\cE_{L, B}} \ar[d]_{\bigoplus (\,\cdot\,)[s]} && \cJ_{k,L,B} \ar[d]_{\bigoplus (\,\cdot\,)[s]} \\
  \bigoplus_{s \in \cS} \rmJ_{k, L[s]} \ar[rr]^{\bigoplus \cF\cE_{L[s], B}} && \bigoplus_{s \in \cS} \cJ_{k, L[s]}
}
\end{gather*}
By what we have shown the composition of the left arrow with the bottom arrow is injective.  Hence so is the top arrow.  By the dimension check in Step~\ref{it:alg:jacobiforms:check}, it is an isomorphism.  This proves correctness.
\end{proof}

The proof that Algorithm~\ref{alg:jacobiforms} terminates will occupy the rest of this section.  We will denote the components of $z$ by $z_1,\ldots, z_N$.  We say that a function $\CC^N \rightarrow \CC$ is elliptic with respect to $L$ and $\tau \in \HS$ if $\phi$ is invariant under $\big|_{L, \tau} (\lambda, \mu)$ for all $\lambda, \mu \in \ZZ^N$.  Here,
\begin{gather*}
  \phi \big|_{L, \tau} (\lambda, \mu)
:=
  \big( (\tau', z) \mapsto \phi(z) \big)
  \big|_{k, L} \big( I_2, (\lambda, \mu) \big) \Big|_{\tau' = \tau}
\end{gather*}
for some $k \in \ZZ$---the slash action on the right hand side is independent of weight~$k$.  We will henceforth use the language of lattices, that was introduced in Section~\ref{sec:discriminantforms}.  Then $L$ will be a fixed Gram matrix of a lattice $\cL$.
\begin{lemma}
\label{la:vanishing_z1_of_higher_jacobi_forms}
Let $\phi$ be a holomorphic function that is elliptic with respect to a lattice~$\cL$ and $\tau \in \HS$.  Then there is a positive integer $b$ that only depends on $L$ such that if
\begin{gather}
\label{eq:vanishing_z1_of_higher_jacobi_forms}
  \CC \rightarrow C^\infty(\HS_{1, N-1})
\text{,}\,
  z_1 \mapsto \phi\big(\tau, (z_1, \ldots, z_N)\big)
\end{gather}
has more than $b$ zeros counted with multiplicity, then $\phi = 0$.
\end{lemma}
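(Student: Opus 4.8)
The plan is to reduce the statement to the classical fact that a non-zero entire function of one variable which is quasi-periodic with respect to the lattice $\Lambda := \ZZ \oplus \ZZ\tau \subset \CC$ has a fixed number of zeros per fundamental domain, that number being visible from its quasi-period, and then to observe that every hyperplane on which $\phi$ vanishes forces zeros of a suitable one-variable restriction.

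First I would unwind the ellipticity hypothesis. Writing $e_1$ for the first standard basis vector and applying $\phi\big|_{L, \tau}(\lambda, \mu) = \phi$ with $(\lambda, \mu) = (0, e_1)$ and with $(\lambda, \mu) = (e_1, 0)$, one obtains
\begin{gather*}
  \phi\big(\tau, z + e_1\big) = \phi(\tau, z)
  \qquad\text{and}\qquad
  \phi\big(\tau, z + e_1 \tau\big) = e\big( -\tfrac{1}{2} L_{11}\, \tau - (Lz)_1 \big)\, \phi(\tau, z)
\text{,}
\end{gather*}
with $L_{11} = e_1^\tr L e_1$ and $(Lz)_1 = e_1^\tr L z$; this is the standard quasi-periodicity, matching the transformation law of the theta series $\theta_{L, l}$ appearing in Theorem~\ref{thm:thetadecomposition}. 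Since the factor on the right never vanishes, if $\phi(\tau, \cdot)$ is identically zero on a hyperplane $\{z_1 = c\}$ it also vanishes on each translate $\{z_1 = c + m + n\tau\}$; so the set of such $c$ is $\Lambda$-invariant, and it suffices to bound the number of these hyperplanes, counted with the order of vanishing of $\phi$ along them, inside one fundamental parallelogram $P$ for $\Lambda$ — this is how ``more than $b$ zeros counted with multiplicity'' should be read.

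Next, assuming $\phi \neq 0$, I would choose $(z_2^0, \dots, z_N^0) \in \CC^{N-1}$ away from the proper analytic subset on which $z_1 \mapsto \phi(\tau, (z_1, z_2^0, \dots, z_N^0))$ is identically zero, and put $g(z_1) := \phi(\tau, (z_1, z_2^0, \dots, z_N^0))$. By the two relations above, $g$ is a non-zero entire function with $g(z_1 + 1) = g(z_1)$ and $g(z_1 + \tau) = \kappa\, e(-L_{11}\, z_1)\, g(z_1)$ for a constant $\kappa \in \CC^\times$ (the only $z_1$-dependence of $(Lz)_1$ being $L_{11} z_1$). Applying the argument principle to $g'/g$ around $\partial P$ — the two sides parallel to $[0,1]$ together contributing $2\pi i\, L_{11}$ by this quasi-periodicity, the two sides parallel to $[0,\tau]$ cancelling by $1$-periodicity — shows $g$ has exactly $L_{11}$ zeros in $P$ counted with multiplicity. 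Finally, if $\{z_1 = c_1\}, \dots, \{z_1 = c_k\}$ with distinct $c_j \in P$ are the hyperplanes on which $\phi(\tau, \cdot)$ vanishes, of orders $\mu_1, \dots, \mu_k$, then factoring $\phi = \prod_j (z_1 - c_j)^{\mu_j}\,\psi$ with $\psi$ holomorphic on $\CC^N$ shows $g$ vanishes at each $c_j$ to order at least $\mu_j$; comparing with the count just found gives $\sum_j \mu_j \le L_{11}$. Thus $b := L_{11}$ — a positive even integer depending only on $L$ — has the required property.

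I do not expect a genuine obstacle here: the analytic core is just the classical zero count for one-variable quasi-periodic functions, and everything else is bookkeeping. The two places needing a little care are the genericity of the auxiliary point $(z_2^0, \dots, z_N^0)$, handled by a routine proper-analytic-subset argument, and pinning down the normalising constant in the quasi-period $e(-\tfrac12 L_{11}\tau - (Lz)_1)$; should a different convention be in force, this only alters $b$ by a bounded factor, which is immaterial for the statement.
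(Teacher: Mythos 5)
Your proof is correct and takes essentially the same route as the paper: the paper likewise notes that the translations by $e_1$ and by a multiple of $e_1\tau$ make the restriction to the $z_1$-line elliptic in the classical one-variable sense and then cites the residue-theorem zero count of Eichler--Zagier, Theorem~1.2, so you have merely written out the contour integral, the generic choice of $(z_2^0,\dots,z_N^0)$, and the multiplicity bookkeeping that the paper delegates to that citation. Your reading of ``more than $b$ zeros'' as a count per fundamental parallelogram of $\ZZ+\ZZ\tau$, and your hedge about the normalisation of the quasi-period (which only changes the admissible value of $b$, not its existence), are both consistent with the paper's intent.
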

\begin{proof}
Choose a vector $l = (l_1, 0, \ldots, 0)^\tr \in L^{-1} \ZZ^N \cap \ZZ^N$.  Since $\phi$ is elliptic with respect to $\cL$ and $\tau$, it is, in particular, invariant under the transformations $\big|_{L, \tau} \big(I_2, (0, e_1)\big)$ and $\big|_{L, \tau} \big(I_2, (l, 0)\big)$, where $e_1$ is the first canonical basis vector of $\ZZ^N$.  In other words, the function~(\ref{eq:vanishing_z1_of_higher_jacobi_forms}) is elliptic in the usual sense.  Then the result follows by applying the Residue Theorem in the same way as in~\cite{EZ85}, Theorem~1.2.
\end{proof}

\begin{lemma}
\label{la:vanishing_s_of_higher_jacobi_forms}
Let $\phi$ be a holomorphic function that is elliptic with respect to a lattice $\cL$ and $\tau \in \HS$.  Suppose that $N \ge 2$.  Then there is a set finite $\cS$ of matrices $s \in \Mat{N, N-1}{\ZZ}$ that only depends on $\cL$ such that if $\phi[s] = 0$ for all $s \in \cS$, then $\phi$ vanishes identically.
\end{lemma}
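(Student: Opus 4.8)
The plan is to restrict $\phi$ to a carefully chosen finite family of integral hyperplanes through the origin and then, for a generic complex line in the ``last $N-1$ coordinates'' direction, to observe that these hyperplanes meet that line in more than $b$ points, so that the associated one–variable restriction of $\phi$ is forced to vanish by the zero count underlying Lemma~\ref{la:vanishing_z1_of_higher_jacobi_forms}.

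Concretely, first fix $b$ as in Lemma~\ref{la:vanishing_z1_of_higher_jacobi_forms} and extract from its proof the one–variable statement it rests on: for every fixed $c = (c_2, \dots, c_N) \in \CC^{N-1}$ the function $z_1 \mapsto \phi(\tau, (z_1, c_2, \dots, c_N))$ is invariant, up to a nowhere–vanishing automorphy factor, under the translations $z_1 \mapsto z_1 + \omega$ with $\omega$ ranging over a full–rank lattice $\Lambda_\tau \subseteq \CC$ (the lattice and the index of the automorphy factor depending only on $\cL$ and $\tau$, with $c_2, \dots, c_N$ entering only through a $z_1$–independent constant); hence, unless it vanishes identically, it has at most $b$ zeros with pairwise distinct images in $\CC / \Lambda_\tau$. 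Next, for $1 \le j \le N-1$ and $0 \le t \le b$ let $s_{t,j} \in \Mat{N, N-1}{\ZZ}$ be the matrix with lower $(N-1) \times (N-1)$ block $I_{N-1}$ and first row equal to $t$ times the $j$-th standard basis vector; its columns span $H_{t,j} := \{\, z \in \CC^N : z_1 = t\, z_{j+1} \,\}$, so that $\phi[s_{t,j}] = 0$ says exactly that $\phi$ vanishes on $H_{t,j}$. I would take $\cS := \{\, s_{t,j} : 1 \le j \le N-1,\ 0 \le t \le b \,\}$, a finite set depending only on $\cL$ (through $N$ and $b$).

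Now assume $\phi[s] = 0$ for all $s \in \cS$. Let $\cB_\tau \subseteq \CC^{N-1}$ be the set of $c$ for which there is \emph{no} index $j$ with $c_{j+1} \ne 0$ and with $0, c_{j+1}, 2c_{j+1}, \dots, b\, c_{j+1}$ pairwise distinct modulo $\Lambda_\tau$. Since $\Lambda_\tau$ is discrete and $N, b$ are fixed, $\cB_\tau$ is a countable union of coordinate products of countable sets, hence countable, so $\CC^{N-1} \setminus \cB_\tau$ is dense. For $c \notin \cB_\tau$ choose such a $j$; since $(t\, c_{j+1}, c_2, \dots, c_N) \in H_{t,j}$ for each $0 \le t \le b$, the one–variable function $z_1 \mapsto \phi(\tau, (z_1, c_2, \dots, c_N))$ has the $b+1$ zeros $t\, c_{j+1}$, $t = 0, \dots, b$, which are pairwise distinct in $\CC / \Lambda_\tau$; by the previous paragraph it vanishes identically. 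Thus $\phi(\tau, (z_1, c_2, \dots, c_N)) = 0$ for all $z_1 \in \CC$ and all $c$ in the dense set $\CC^{N-1} \setminus \cB_\tau$; since $\phi$ is holomorphic, hence continuous, in the last $N-1$ variables, $\phi$ vanishes identically.

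The only genuine idea is the choice of $\cS$; the point that needs care is that a \emph{single} finite family — one that does not depend on the line one later tests against — must do the job, which is why one includes all the small integer multiples $t\, e_j$ with $0 \le t \le b$ in each direction rather than a single hyperplane per direction. I expect the main nuisance in writing out full details to be cleanly isolating, inside the proof of Lemma~\ref{la:vanishing_z1_of_higher_jacobi_forms}, the one–variable zero bound together with the assertion that the period lattice $\Lambda_\tau$ and the bound $b$ are independent of the frozen coordinates $c_2, \dots, c_N$.
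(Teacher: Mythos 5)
Your proof is correct, but it follows a genuinely different route from the paper's. The paper argues slice by slice in the \emph{first} coordinate: for each of $b$ rational values $\widetilde{z}_1$ it builds a set $\cS'(\widetilde{z}_1)$ of restrictions forcing the slice $\phi_{\widetilde{z}_1}$ to vanish (for $N=2$ by evaluating $\phi_{\widetilde{z}_1}$ at $b(\widetilde{z}_1)$ points $\widetilde{z}_1 p_i/q_i$ coming from the restriction vectors $(q_i,p_i)^\tr$, for $N>2$ by induction on the rank), and then invokes Lemma~\ref{la:vanishing_z1_of_higher_jacobi_forms} in the $z_1$-direction. You instead fix once and for all the hyperplanes $z_1 = t\,z_{j+1}$, $0\le t\le b$, and show that a generic line in the $z_1$-direction already meets $b+1$ of them in points that are distinct modulo the period lattice, then finish by density and continuity. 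Both arguments rest on the same underlying one-variable zero bound, and both have to extract from the proof of Lemma~\ref{la:vanishing_z1_of_higher_jacobi_forms} (rather than its statement) the fact that the scalar restriction $z_1\mapsto\phi(\tau,(z_1,c))$ is quasi-periodic for a lattice $\Lambda_\tau$ and with a zero count that are uniform in $c$ — you flag this correctly, and the paper implicitly uses the same uniformity when it applies the lemma to the slices $\phi_{\widetilde{z}_1}$, which are elliptic only for finite-index sublattices. What your approach buys is the elimination of the induction on $N$ and of the case split $\widetilde{z}_1=0$ versus $\widetilde{z}_1\ne 0$, at the cost of a (harmless) measure-theoretic genericity argument; what the paper's approach buys is a purely algebraic description of $\cS$ that composes well with the inductive structure used elsewhere (Corollary~\ref{cor:vanishing_s_of_higher_jacobi_forms}). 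One cosmetic remark: your matrices $s_{0,j}$ coincide for all $j$ and reproduce exactly the paper's choice of $\cS'(0)$.
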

The following corollary can be deduced from Lemma~\ref{la:vanishing_s_of_higher_jacobi_forms} using induction.
\begin{corollary}
\label{cor:vanishing_s_of_higher_jacobi_forms}
Let $\phi$ be an elliptic function for a finite index sublattice $L'$ of~$L$, and suppose that $N \ge 2$.  There is a set finite $\cS$ of vectors in $\ZZ^{N - 1}$ that only depends on $L'$ such that if $\phi[s] = 0$ for all $s \in \cS$, then $\phi = 0$.
\end{corollary}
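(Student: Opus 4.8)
The plan is to prove the corollary by induction on the number of variables~$N$, using Lemma~\ref{la:vanishing_s_of_higher_jacobi_forms} to strip off one variable at each step until only one‑variable restrictions remain. For $N = 2$ the assertion is essentially Lemma~\ref{la:vanishing_s_of_higher_jacobi_forms} applied with its lattice~$\cL$ taken to be~$L'$: it yields a finite set~$\cS$ of integer column vectors, depending only on~$L'$, and for $s \in \cS$ the restriction $\phi[s](\tau, z) = \phi(\tau, s z)$, $z \in \CC$, is already a function of one variable, so nothing further is required.

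For the inductive step let $N \ge 3$. The lemma applied to~$\phi$ produces a finite set $\cS_0 \subset \Mat{N, N-1}{\ZZ}$, depending only on~$L'$, such that $\phi[s_0] = 0$ for all $s_0 \in \cS_0$ forces $\phi = 0$, and we may assume the $s_0 \in \cS_0$ have full column rank. Fix such an~$s_0$. Substituting $\lambda \mapsto s_0 \lambda'$, $\mu \mapsto s_0 \mu'$ into the slash action $\phi \big|_{L',\tau}(\lambda, \mu)$ shows that the restriction $\phi_0 := \phi[s_0]$ is again a holomorphic, elliptic function --- now on $\HS_{1, N-1}$, with the same~$\tau$, index matrix $L[s_0] = s_0^\tr L s_0$ (which still has even diagonal because~$L$ does), and periods ranging over the sublattice $L'_{s_0} := \{\, \mu \in \ZZ^{N-1} : s_0 \mu \in L' \,\}$, which has finite index in $\ZZ^{N-1}$ since~$s_0$ is injective and~$L'$ contains $d\,\ZZ^N$ for some $d > 0$. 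The induction hypothesis therefore applies to~$\phi_0$ and supplies a finite set~$\cT_{s_0}$ of integer column vectors, depending only on~$L'_{s_0}$ and hence only on~$L'$, such that $\phi_0[t] = 0$ for all $t \in \cT_{s_0}$ forces $\phi_0 = 0$. Since $\phi_0[t] = \phi[s_0 t]$ and $s_0 t$ is again an integer column vector, the set
\begin{gather*}
  \cS := \{\, s_0 t \;:\; s_0 \in \cS_0,\ t \in \cT_{s_0} \,\}
\end{gather*}
is finite, depends only on~$L'$, and satisfies: $\phi[s] = 0$ for all $s \in \cS$ $\Longrightarrow$ $\phi_0 = 0$ for every $s_0 \in \cS_0$ $\Longrightarrow$ $\phi = 0$. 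This closes the induction.

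I expect the only real obstacle to lie in the bookkeeping of the inductive step. One has to check carefully that the pullback of an elliptic function along an integer matrix of full column rank is again elliptic, for the explicitly described finite‑index sublattice one dimension lower and with $L[s_0]$ a legitimate index matrix; that the matrices delivered by Lemma~\ref{la:vanishing_s_of_higher_jacobi_forms} may indeed be taken of full column rank, so that the induction hypothesis applies to the lower‑dimensional restrictions; and --- running through each use of the lemma and of the induction hypothesis --- that every intermediate lattice and auxiliary finite set, and therefore~$\cS$ itself, is determined by~$L'$ alone and not by~$\phi$ or~$\tau$. A minor additional point, relevant in the base case and at the outset, is to make ``elliptic for a finite‑index sublattice'' fit the hypothesis of Lemma~\ref{la:vanishing_s_of_higher_jacobi_forms}, if necessary after rescaling the $z$‑coordinates so that the periods span a lattice of full rank.
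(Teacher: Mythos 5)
Your proof is correct and follows exactly the route the paper intends: the paper itself only remarks that the corollary ``can be deduced from Lemma~\ref{la:vanishing_s_of_higher_jacobi_forms} using induction,'' and your induction on $N$ --- composing the full-column-rank matrices from the lemma with the restriction vectors supplied by the induction hypothesis for the pulled-back finite-index sublattice --- is precisely that deduction, spelled out with the bookkeeping the paper omits. (You also implicitly correct the paper's apparent typo of $\ZZ^{N-1}$ for $\ZZ^{N}$ in the statement, which is the sensible reading.)
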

\begin{proof}[Proof of Lemma~\ref{la:vanishing_s_of_higher_jacobi_forms}]
We will show that, given $\widetilde{z}_1 \in \QQ + i \QQ$, there is a set $\cS'(\widetilde{z}_1)$ of matrices $s \in \Mat{N, N-1}{\ZZ}$ such that if $\phi[s] = 0$ for all $s \in \cS'(\widetilde{z}_1)$, then
\begin{gather*}
  \phi_{\widetilde{z}_1}\big( z_2, \ldots, z_N \big)
:=
  \phi\big( \widetilde{z}_1, z_2, \ldots, z_N \big)
=
  0
\text{.}
\end{gather*}
Suppose that we have already proved that $\cS'(\widetilde{z}_1)$ exits for all $\widetilde{z}_1$.  Apply Lemma~\ref{la:vanishing_z1_of_higher_jacobi_forms} to $\phi$, $\cL$ and $\tau$ to obtain $b$.  Then we can set $\cS := \bigcup \cS'(\widetilde{z}_1)$ where the union runs over $b$ distinct $\widetilde{z}_1$'s.

We are hence reduced to finding $\cS'(\widetilde{z}_1)$.  In the case of $\widetilde{z}_1 = 0$, we can choose
\begin{gather*}
  \cS'(\widetilde{z}_1)
=
 \Big \{
  \begin{pmatrix}
   \big( 0^{(N - 1)} \big)^\tr \\
   I_{N - 1}
  \end{pmatrix}
  \Big \}
\text{,}
\end{gather*}
where $0^{(N - 1)}$ is the vector of length $N - 1$ whose entries are all $0$.  To prove the other cases, note that $\phi_{\widetilde{z}_1}$ is elliptic for some finite index sublattice of $\cL$.  We use induction on $N \ge 2$.  We consider the case $N = 2$ separately.  We employ Lemma~\ref{la:vanishing_z1_of_higher_jacobi_forms} to get $b(\widetilde{z}_1)$ that can be applied to $\phi_{\widetilde{z}_1}$.  We can set $\cS'(\widetilde{z}_1) = \big\{ s'_i \big\}_{1 \le i \le b(\widetilde{z}_1)} \subset \QQ$, where the $s'_i$ are pairwise not equivalent modulo~$1$.  To see that that this choice of~$\cS'(\widetilde{z}_1)$ works, write $s'_i = p_i \slashdiv q_i$ with coprime integers $p_i$ and $q_i$.  We have
\begin{gather*}
  \phi[(q_i, p_i)^\tr]( \widetilde{z}_1 \slashdiv q_i )
=
  \phi( \widetilde{z}_1, \widetilde{z}_1 s'_i )
=
  \phi_{\widetilde{z}_1} ( \widetilde{z}_1 s'_i )
\text{.}
\end{gather*}
Suppose that the left hand side vanishes for all $i$.  Then so does the right hand side.  By Lemma~\ref{la:vanishing_z1_of_higher_jacobi_forms}, we then find that $\phi_{\widetilde{z}_1} = 0$.  This completes the proof in the case of~$N = 2$.

In the case of $N > 2$ we can assume that the statement of the lemma holds for all $N' < N$.  Given $\widetilde{z}_1$, choose $\cS$ for $\phi_{\widetilde{z}_1}$ as in the lemma and call it $\cS(\widetilde{z}_1)$.  A straightforward computation shows that we can set
\begin{gather*}
  \cS'(\widetilde{z}_1)
=
  \Big\{
    \begin{pmatrix}
    1         & ( 0^{(N - 2)} \big)^\tr \\
    0^{(N - 1)} & s'                   
    \end{pmatrix}
  \Big\}_{s \in \cS(\widetilde{z}_1)}
\text{.}
\end{gather*}
\end{proof}

\begin{proposition}
\label{prop:modularity_s_of_higher_jacobi_forms}
Let $\phi(\tau, z)$ be a function that is $1$-periodic in $\tau \in \HS$ and an elliptic function in $z$ with respect to $\cL$ and $\tau$ for all $\tau$.  Fix a weight~$k \in \ZZ$, and suppose that $N \ge 2$.  Then there is a set $\cS \subset \ZZ^{N - 1}$ that only depends on $\cL$ such that if $\phi[s]$ is a Jacobi form of weight~$k$ and index $L[s]$ for all $s \in \cS$, then $\phi$ is a Jacobi form of weight $k$ and index $L$.
\end{proposition}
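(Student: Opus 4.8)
The plan is to check the two defining conditions of a Jacobi form for $\phi$ directly, letting the hypothesised restrictions supply whatever is not already part of the hypotheses. First I would note that $\Gamma^\rmJ$ is generated by the lattice translations $\bigl(I_2,(\lambda,\mu)\bigr)$, $\lambda,\mu\in\ZZ^N$, together with $\bigl(\left(\begin{smallmatrix}1&1\\0&1\end{smallmatrix}\right),0\bigr)$ and $\sigma:=\bigl(\left(\begin{smallmatrix}0&-1\\1&0\end{smallmatrix}\right),0\bigr)$. Since the automorphy factor of $\bigl(\left(\begin{smallmatrix}1&1\\0&1\end{smallmatrix}\right),0\bigr)$ is trivial, the hypotheses say precisely that $\phi$ is invariant under this last element ($1$-periodicity in $\tau$) and under all lattice translations (ellipticity for every $\tau$). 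Hence $\phi$ is invariant under all of $\Gamma^\rmJ$, and so satisfies condition~(i) of Definition~\ref{def:jacobiforms}, as soon as $\phi|_{k,L}\sigma=\phi$; and it is then a Jacobi form once, in addition, condition~(ii) in its $O(1)$ form holds. I treat holomorphy of $\phi$ as given; in the application, inside the termination proof of Algorithm~\ref{alg:jacobiforms}, $\phi$ is assembled from a truncated Fourier expansion, hence is a Laurent polynomial in $q$ and the $\zeta_i$, in particular weakly holomorphic --- a fact which will be used below.

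For the missing invariance I would run a vanishing argument on $\psi:=\phi|_{k,L}\sigma-\phi$, using the tools of this section. The restriction map intertwines the slash action by $\sigma$: one has $(\phi|_{k,L}\sigma)[s]=\phi[s]|_{k,L[s]}\sigma$ for every integral $s$, by the computation underlying Lemma~\ref{la:restriction_of_jacobi_forms}, the only ingredient being $L[sw]=L[s][w]$. As each $\phi[s]$ with $s\in\cS$ is a Jacobi form, hence $\sigma$-invariant, this gives $\psi[s]=0$ for all $s\in\cS$. Moreover $\psi$ is holomorphic and, for every $\tau$, elliptic with respect to $\cL$: conjugating a lattice translation by $\sigma$ again yields a lattice translation (the subgroup $\ZZ^2\otimes\ZZ^N$ is normal in $\Gamma^\rmJ$ and is preserved with integral entries), so $\phi|_{k,L}\sigma$, and with it $\psi$, remains invariant under all lattice translations. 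I would then take $\cS$ to contain the finite, $\cL$-dependent vanishing set for $\psi$ furnished by Corollary~\ref{cor:vanishing_s_of_higher_jacobi_forms} (equivalently Lemma~\ref{la:vanishing_s_of_higher_jacobi_forms}); the vanishings $\psi[s]=0$ then force $\psi=0$, that is $\phi|_{k,L}\sigma=\phi$. Consequently $\phi$ is $\Gamma^\rmJ$-invariant and has a Fourier expansion $\phi=\sum_{n,r}c(n,r)q^n\zeta^r$ whose coefficients obey~\eqref{eq:higherjacobiforms_fourier_relation1} and~\eqref{eq:higherjacobiforms_fourier_relation2}.

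It remains to secure condition~(ii), which by what we now know is the statement that $c(n,r)=0$ whenever $2|L|n-L^{\#}[r]<0$; by~\eqref{eq:higherjacobiforms_fourier_relation1}--\eqref{eq:higherjacobiforms_fourier_relation2} it suffices to treat $r$ in a set $\cR$ of representatives as in Algorithm~\ref{alg:jacobiforms}, and here I would use that $L$ is positive definite. I would enlarge $\cS$ by adjoining, for each nonzero $r\in\cR$, a ``dual'' restriction vector $s_r\in\ZZ^N$ proportional to $L^{-1}r$ with denominators cleared (the case $r=0$ being analogous and easier). For $s_r$ the Cauchy--Schwarz inequality $(s_r^\tr r)^2\le(s_r^\tr L s_r)(r^\tr L^{-1}r)$ is an equality, so the index-$L[s_r]$ discriminant of the scalar-index Jacobi form $\phi[s_r]$ at its Fourier coefficient $c_{s_r}(n,s_r^\tr r)$ equals a positive multiple of $2|L|n-L^{\#}[r]$. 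Suppose, for contradiction, that $c(n,r)\ne0$ for some pair with $2|L|n-L^{\#}[r]<0$; since $\phi$ is weakly holomorphic we may pick such a pair $(n_0,r_0)$ with $r_0\in\cR$ whose discriminant $\Delta_0:=2|L|n_0-L^{\#}[r_0]<0$ is as small as possible. In the expansion $c_{s_{r_0}}(n_0,s_{r_0}^\tr r_0)=\sum_{\tilde r:\,s_{r_0}^\tr\tilde r=s_{r_0}^\tr r_0}c(n_0,\tilde r)$, every $\tilde r\ne r_0$ has $\tilde r-r_0$ orthogonal to $r_0$ in the $L^{-1}$-form, hence $2|L|n_0-L^{\#}[\tilde r]<\Delta_0$, so $c(n_0,\tilde r)=0$ by minimality; thus $c_{s_{r_0}}(n_0,s_{r_0}^\tr r_0)=c(n_0,r_0)\ne0$. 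But $\phi[s_{r_0}]$ is a genuine Jacobi form and the discriminant of this coefficient is a positive multiple of $\Delta_0<0$, forcing it to vanish --- a contradiction. Hence $\phi\in\rmJ_{k,L}$, and the set $\cS$ assembled in the course of the argument depends only on $\cL$.

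I expect the growth step to be the real obstacle. A generic restriction $\phi[s]$ only \emph{relaxes} the discriminant condition --- Cauchy--Schwarz points in the unfavourable direction --- so the restriction directions must be engineered so that it transfers without loss, which is exactly what singles out the ``dual'' directions $s_r\propto L^{-1}r$ (the equality case). The second annoyance, that $c_{s_r}(n,s_r^\tr r)$ is a sum of Fourier coefficients of $\phi$ rather than one of them, is dodged by testing the coefficient of smallest discriminant, for which the competing summands vanish automatically; collecting the vanishing set for $\psi$, the conjugation bookkeeping, and the dual directions into one finite $\cS$ depending only on $\cL$ is then a routine matter. By contrast, the $\sigma$-invariance step is straightforward once one observes that restriction intertwines the slash actions and that $\sigma$ preserves ellipticity.
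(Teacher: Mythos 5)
Your core argument is exactly the paper's proof: set $\psi=\phi-\phi|_{k,L}S$, observe via the group structure of $\Gamma^\rmJ$ that $\psi$ is again elliptic with respect to $\cL$ and every $\tau$, use the intertwining $(\phi|_{k,L}S)[s]=\phi[s]|_{k,L[s]}S$ to get $\psi[s]=0$ for $s\in\cS$, conclude $\psi=0$ from Corollary~\ref{cor:vanishing_s_of_higher_jacobi_forms}, and finish with the generators $S$, $T$, and the lattice translations. Where you go beyond the paper is the last third of your write-up: the paper's proof stops at $\Gamma^\rmJ$\nbd invariance and does not comment on the cusp condition, whereas you correctly note that a generic restriction only relaxes the discriminant inequality and you repair this by adjoining the dual directions $s_r\propto L^{-1}r$ (the Cauchy--Schwarz equality case) and running a minimal-discriminant induction. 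That supplementary argument is sound and is a genuine strengthening of the written proof; in the paper's only application (termination of Algorithm~\ref{alg:jacobiforms}) the issue is invisible because the formal expansions live in $\QQ^{\cI(L,\infty)}$ and are therefore supported on non-negative discriminants by construction, so the $O(1)$ condition is automatic once invariance is known. The one caveat is that your growth argument leans on $\phi$ being weakly holomorphic (finitely many negative-discriminant classes with nonvanishing coefficient, so that a minimal $\Delta_0$ exists); that hypothesis is not in the statement of the proposition, but it is harmless in context and you flag it explicitly.
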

\begin{proof}
Set $S = \left(\begin{smallmatrix}0 & -1 \\ 1 & 0\end{smallmatrix}\right)$.  Using the group structure of $\Gamma^\rmJ$, we see that $\big( \phi - \phi\big|_{k, L} S \big)$ is elliptic in $z$ with respect to $\cL$ and $\tau$ for all $\tau$.  Choose $\cS$ as in Corollary~\ref{cor:vanishing_s_of_higher_jacobi_forms} and assume that $\phi[s] - \phi[s] \big|_{k, L} S = 0$ for all $s \in \cS$.  For $z' \in \CC^{N - 1}$, we have
\begin{gather*}
  \big( \phi - \phi\big|_{k, L} S \big)[s] (\tau, z')
=
  \phi[s](\tau, z') - \tau^{-k} \phi\big(\frac{-1}{\tau}, \frac{s z}{\tau} \big)
=
  \big( \phi[s] - \phi[s] \big|_{k, L} S \big) (\tau, z')
=
  0
\text{.}
\end{gather*}
Hence, by Corollary~\ref{cor:vanishing_s_of_higher_jacobi_forms}, we have $\phi - \phi\big|_{k, L} S = 0$.  Since $\Gamma^\rmJ$ is generated by $S$, $T$, and $\ZZ^2 \otimes \ZZ^N \subset \Gamma^\rmJ$, this proves that $\phi$ is a Jacobi form of weight~$k$ and index~$L$.  We have thus proved that the proposition holds with $\cS$ as in Corollary~\ref{cor:vanishing_s_of_higher_jacobi_forms}.
\end{proof}

\begin{proof}[Proof of termination of Algorithm~\ref{alg:jacobiforms}]
Given $B' \le B$, let $\cU_{B', B} = \lspan\big( e_{n, r} \,:\, B' \le n < B \big) \subset \QQ^{\cI(L, B)}$, where $e_{n, r}$ denotes the canonical basis vector attached to $(n, r) \in \cI(L, B)$.  Set
\begin{gather*}
  \cJ_{k, L}
:=
  \lim_{0 < B} \bigcap_{0 < B' \le B} \cJ_{k, L, B'} + \cU_{B', B}
\subset
  \QQ^{\cI(L, \infty)}
\text{.}
\end{gather*}
Since $\dim \cJ_{k, L, B'} < \infty$ for all $B$ and $\dim \cJ_{k, L, B'} \le \dim \cJ_{k, L, B}$ for $B' \le B$ sufficiently large, it suffices to show that
\begin{gather*}
  \cF\cE_\infty :\, J_{k, L} \hookrightarrow \cJ_{k, L}
\end{gather*}
is an isomorphism.

The Fourier coefficients of $J_{k, L[s]}$ have at most polynomial growth in $n$, and so do the coefficients of any element $\phi$ of $\cJ_{k, L}$.  Consequently, any such $\phi$ corresponds to an absolutely convergent Fourier expansion on $\HS_{1, N}$.  By construction these functions are elliptic in $z$ and $1$-periodic on $\tau$.  Lemma~\ref{prop:modularity_s_of_higher_jacobi_forms} allows us to find a set $\cS_{k,L}$, so that it is sufficient to check that $\phi[s]$ is modular.  Since in Step~\ref{it:alg:jacobiforms:check} $\cS$ is enlarged by short vectors, it will eventually contain $\cS_{k, L}$.  This proves that Algorithm~\ref{alg:jacobiforms} terminates.
\end{proof}


\section{Hecke operators and newforms}
\label{sec:heckeoperators}

We define three types of Hecke operators, previously given in~\cite{Gr94}, that generalize those studied in~\cite{EZ85} in the case $N = 1$.  The aim of this section is to relate two of them to an action on vector valued modular forms.  We give the third one only for completeness.  The first operator is defined for all $s \in \Mat{N}{\ZZ} \cap \GL{N}(\QQ)$.  The second and third are defined for all $0 < l \in \ZZ$ that are coprime to $|L|$.

Expressions for them can be obtained from the parabolic Hecke algebra.  In order to define it, consider the group
\begin{gather*}
  \GSp{N + 1}(\ZZ)
=
  \big\{ g \in \Mat{2(N + 1)}{\ZZ} 
         \,:\, M^\tr J M = \nu(g) J,\; \nu(g) \in \ZZ \big\}
\text{,}
\\[4pt]
  J
=
  \begin{pmatrix}
    0^{(N + 1, N + 1)} & - I_{N + 1} \\
    I_{N + 1}         & 0^{(N + 1, N + 1)}
  \end{pmatrix}
\text{,}
\end{gather*}
where $0^{(N_1, N_2)} \in \Mat{N_1, N_2}{\ZZ}$ is the zero matrix.  The Jacobi group~$\Gamma^\rmJ_N$ can be embedded into $\Sp{N + 1}(\ZZ) = \{ g \in \GSp{N + 1}(\ZZ) \,:\, \nu(g) = 1 \}$:
\begin{gather*}
  \Big( \left(\begin{smallmatrix} a & b \\ c & d \end{smallmatrix}\right),\,
        (\lambda, \mu) \Big)
\mapsto
  \begin{pmatrix}
    a        & 0^{(1,N)} & b & a \mu^\tr - b \lambda^\tr \\
    \lambda & I_N       & \mu & 0^{(N, N)} \\
    c        & 0^{(1, N)} & d    & c \mu^\tr - d \lambda^\tr \\
    0^{(N, 1)} & 0^{(N, N)} & 0^{(N, 1)} & I_N
  \end{pmatrix}
\end{gather*}
It sits inside the monoid~$\rmG\Gamma^\rmJ_N$ of all matrices
\begin{gather*}
  \begin{pmatrix}
    a        & 0^{(1,N)} & b & \mu^\tr \\
    \lambda & a'       & \mu & 0^{(N, N)} \\
    c        & 0^{(1, N)} & d    & -\lambda^\tr \\
    0^{(N, 1)} & 0^{(N, N)} & 0^{(N, 1)} & d'
  \end{pmatrix}
\in
  \GSp{N + 1}(\ZZ)
\text{,}
\end{gather*}
where $a', d' \in \Mat{N}{\ZZ} \cap \GL{N}(\QQ)$, $\mu, \lambda \in \ZZ^N$, and $\left(\begin{smallmatrix} a & b \\ c & d \end{smallmatrix}\right) \in \Mat{2}{\ZZ} \cap \GL{2}(\QQ)$.  The action of such a matrix, call it $g^\rmJ$ for the time being, on Jacobi forms is
\begin{multline*}
  \nu(\gamma^\rmJ)^{\frac{-(N + 1) k}{2}} \cdot \big( \phi \big|_{k, L}\, g^\rmJ \big) (\tau, z)
=
\\[4pt]
  (c \tau + d)^{-k}
  e\Big( -(c \tau + d)^{-1} L[z + \lambda \tau + \mu]
        + 2 \lambda^\tr L z + L[\lambda] \tau \Big)
  \phi\Big( \frac{a \tau + b}{c \tau + d},\, \frac{z + \lambda \tau + \mu}{c \tau + d} \Big)
\text{.}
\end{multline*}

The Jacobi Hecke algebra $\cH^\rmJ_N$ is the algebra attached to the Hecke pair $\big( \Gamma^\rmJ_N,\, \rmG\Gamma^\rmJ_N \big)$ -- see~\cite{Kr90, Gr94} for details.  Note that it is a noncommutative algebra.  The operator~$U'_s$ is induced by the double coset that is generated by the block diagonal matrix
\begin{gather*}
  \diag\big( \det(s), \det(s) s, \det(s), s^\# \big)
\text{.}
\end{gather*}
It maps Jacobi forms of index~$L$ to Jacobi forms of index $L[s]$.  Given $l$ as above, the operator~$V'_l$ is induced by the set of double cosets which come from the matrices
\begin{gather*}
  \begin{pmatrix}
    a        & 0^{(1,N)} & b & 0^{(1,N)}\\
    \lambda & l I_N       & 0^{(N,1)} & 0^{(N, N)} \\
    c        & 0^{(1, N)} & d    & 0^{(1,N)} \\
    0^{(N, 1)} & 0^{(N, N)} & 0^{(N, 1)} & I_N
  \end{pmatrix}
\text{,}
\end{gather*}
where $\left(\begin{smallmatrix} a & b \\ c & d \end{smallmatrix}\right)$ runs through all matrices with determinant $l$.  This operator maps Jacobi forms of index~$L$ to Jacobi forms of index $l L$.  The operator $T'_l$ preserves the index.  It is induced by the set of double cosets which come from matrices
\begin{gather*}
  \begin{pmatrix}
    a        & 0^{(1,N)} & b & 0^{(1,N)}\\
    \lambda & l I_N       & 0^{(N,1)} & 0^{(N, N)} \\
    c        & 0^{(1, N)} & d    & 0^{(1,N)} \\
    0^{(N, 1)} & 0^{(N, N)} & 0^{(N, 1)} & l I_N
  \end{pmatrix}
\text{,}
\end{gather*}
where $\left(\begin{smallmatrix} a & b \\ c & d \end{smallmatrix}\right)$ runs through all matrices with determinant $l^2$ such that $\gcd\, \gamma$ is a square.

We will use rescaled versions of $U'_s$, $V'_l$, and $T'_l$.  Their actions are defined as follows.
\begin{align}
\label{eq:def_hecke_us}
  \phi \big|_{k, L}\, U_s
& =
  \phi(\tau, s z)
\\[6pt]
\label{eq:def_hecke_vl}
  \phi \big|_{k, L}\, V_l
& =
  l^k \sum_{\substack{ \gamma \in \Gamma \backslash \Mat{2}{\ZZ} \\ \det \gamma = l}}
    (c \tau + d)^{-k} e\Big( \frac{- c L[z]}{c \tau + d} \Big) \,
    \phi\Big( \gamma \tau, \frac{l z}{c \tau + d} \Big)
\\[6pt]
\label{eq:def_hecke_tl}
  \phi \big|_{k, L}\, T_l
& =
  l^{k - 2(N + 1)}
  \sum_{\substack{\gamma \in \Gamma \backslash \Mat{2}{\ZZ}  \\ \det{\gamma} = l^2 \\ \gcd{\gamma} = \square}}\;
  \sum_{x \in \ZZ^2 \otimes \ZZ^N \slashdiv l (\ZZ^2 \otimes \ZZ^N)}
   \phi \big|_{k, L}\, (\gamma, x)
\text{.}
\end{align}
Hecke operators for classical elliptic modular forms will occur later.  By abuse of notation, we denote them also by $T_l$.

If $\phi$ is a Jacobi form of index $L$, then $\phi |_{k, L}\, U_s$ has index $L' := L[s]$.  In the language of lattices, the relation of indices corresponds to the inclusion $\cL' := s \cL \subseteq \cL$.  On the other hand, inclusions of lattices manifest themselves as isotropic subgroups~$H$ of $D' = \disc\, \cL'$.  We now show that the action of $U_s$ on the components of the theta decomposition is the same as the action of $\uparrow_H^{D'}$ defined in~\cite{Sch11}.  Recall the definition of this operator.  Let $H \subset D'$ be an isotropic subgroup of a discriminant form~$D'$.  Then, by definition, we have
\begin{gather*}
  f \big\uparrow_H^{D'}
=
  \sum_{\mu \in H^\perp \subseteq D'} \!\! f_{\mu + H} \, \frake_\mu
\end{gather*}
for every vector valued modular form~$f$ of type~$\rho_{D}$ with $D = H^\perp \slashdiv H$.  In the present situation, we can choose the isotropic subgroup $H := \cL \slashdiv \cL' \subset \disc\, \cL'$.  We then have $H^\perp \slashdiv H \cong \disc\, \cL \subseteq \disc\, \cL'$.
\begin{proposition}
\label{prop:hecke_us_commutation}
The following diagram commutes:
\begin{gather*}
\xymatrix{
  \rmJ_{k, L} \ar[r]^{U_s} \ar[d]^{\Theta_{L}} & \rmJ_{k, L'} \ar[d]^{\Theta_{L'}} \\ 
  \rmM_{k - \frac{N}{2}} \big({\check \rho}_{\cL}\big) \ar[r]^{\uparrow_{H}^{D'}} & \rmM_{k - \frac{N}{2}} \big( {\check \rho}_{\cL'} \big)
}
\end{gather*}
\end{proposition}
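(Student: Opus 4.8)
The strategy is to compute both composites in the diagram directly on Fourier expansions, using the theta decomposition. Start with $\phi\in\rmJ_{k,L}$ and write its theta decomposition $\phi = \sum_{l\in\disc\,\cL}\theta_{L,l}\,h_l$ as in Theorem~\ref{thm:thetadecomposition}. Applying $U_s$ means forming $\phi(\tau,sz)$. The key computational input is the behaviour of the theta series under the pullback $z\mapsto sz$: I expect that $\theta_{L,l}(\tau,sz)$ decomposes as a sum of the theta series $\theta_{L',l'}(\tau,z)$ attached to $L'=L[s]$, where $l'$ ranges over those classes in $\disc\,\cL'$ that restrict (via the dual of $s$) to $l$, i.e. over the coset $l + H$ under the identification $H^\perp/H\cong\disc\,\cL$. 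Concretely, summing $q^{L^{-1}[r]/2}\zeta^{sr}$ over $r\equiv l\pmod{L\ZZ^N}$ and regrouping by the value of $sr$ modulo $L'\ZZ^N$ should give exactly $\sum_{l'\in l+H}\theta_{L',l'}(\tau,z)$, because $s^\#:\cL'^\#/\cL'\to\cL^\#/\cL$ is the natural surjection with kernel $H$ and $L'^{-1}[sr] = L^{-1}[r]$ on the relevant sublattice. Granting this, $\phi(\tau,sz) = \sum_{l}\big(\sum_{l'\in l+H}\theta_{L',l'}\big)h_l = \sum_{l'\in H^\perp}\theta_{L',l'}\,h_{l'+H}$, where $h_{l'+H}$ means $h_l$ for the unique $l$ with $l'\in l+H$; and $\theta_{L',l'}$ attached to classes $l'\notin H^\perp$ do not appear. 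That last expression is, by the uniqueness in the theta decomposition, precisely the theta decomposition of $\Theta_{L'}(\phi|U_s)$, so its components are $(\Theta_{L'}(\phi|U_s))_{l'} = h_{l'+H}$ for $l'\in H^\perp$ and $0$ otherwise.

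**Comparing with $\uparrow_H^{D'}$.** On the other side, $\Theta_L(\phi) = (h_l)_{l\in\disc\,\cL}$, and by the definition of $\uparrow_H^{D'}$ recalled just before the proposition, $\big((h_l)\big)\!\uparrow_H^{D'}$ has $\mu$-component equal to $h_{\mu+H}$ for $\mu\in H^\perp\subseteq D'$ and $0$ for $\mu\notin H^\perp$. This is exactly the tuple computed above. Hence $\Theta_{L'}\circ U_s = {\uparrow_H^{D'}}\circ\Theta_L$, which is the commutativity claim. One should double-check the weight and type bookkeeping: $U_s$ preserves weight $k$ and sends index $L$ to $L'=L[s]$, so on the modular-form side we pass from weight $k-\tfrac N2$ type ${\check\rho}_\cL$ to weight $k-\tfrac{N}{2}$ type ${\check\rho}_{\cL'}$ — note the rank is unchanged because $s\in\Mat N{\ZZ}\cap\GL N(\QQ)$, so $L'$ is again $N\times N$; this matches the vertical arrows in the diagram. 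Since everything in the diagram is $\CC$-linear and the identification of composites holds on each $\phi$, the diagram commutes.

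**Main obstacle.** The routine parts are the definitions of the slash actions and the linearity; the one genuine lemma is the theta pullback identity $\theta_{L,l}(\tau,sz) = \sum_{l'\in (s^\#)^{-1}(l)}\theta_{L',l'}(\tau,z)$ together with the fact that only classes $l'\in H^\perp$ survive and that $L'^{-1}[sr]$ agrees with $L^{-1}[r]$ on the lattice of summation. This requires a careful analysis of the reindexing: the vectors $r\in\ZZ^N$ with $r\equiv l$ map under $s$ to vectors $sr$ that need not hit every class of $\cL'^\#/\cL'$, precisely the failure that pins down $H^\perp$. I would prove it by noting that $\cL'=s\cL\subseteq\cL\subseteq\cL^\#\subseteq\cL'^\#$ (using $s\in\GL N(\QQ)$ to make sense of the duals), that the natural map $\cL'^\#/\cL'\twoheadrightarrow\cL^\#/\cL$ induced by inclusion has kernel $\cL/\cL' = H$, and that $r\mapsto sr$ carries $\cL^\#$ into $\cL'^\#$ compatibly with the quadratic forms (because $L^{-1} = s^\tr (L')^{-1} s$ wherever this makes sense). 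Then one organizes the sum defining $\theta_{L,l}\circ s$ by the image class in $\cL'^\#/\cL'$: the image classes are exactly those in the preimage of $l$, i.e. $l+H\subseteq H^\perp$, and for each the inner sum is the corresponding $\theta_{L',l'}$. This is the step where I would be most careful about getting the direction of all the maps and the dualities right; once it is in place the rest is immediate.
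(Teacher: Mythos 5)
Your proposal matches the paper's proof essentially step for step: both reduce the claim to the theta pullback identity $\theta_{L,\mu}(\tau,sz)=\sum_{\mu'\in\ZZ^N/ s\ZZ^N}\theta_{L',s^\tr\mu+s^\tr L\mu'}(\tau,z)$ and verify it by reindexing the Fourier expansion via $r\mapsto s^\tr r$ (note the transpose — the exponent of $\zeta$ becomes $s^\tr r$, not $sr$), using $L[s]^{-1}[s^\tr r]=L^{-1}[r]$. Your extra paragraph identifying the resulting index set with the coset $\mu+H$ in $H^\perp\subseteq D'$ and matching it against the definition of $\uparrow_H^{D'}$ is exactly the bookkeeping the paper leaves implicit, and it is correct.
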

\begin{proof}
From~(\ref{eq:def_hecke_us}) and the theta decomposition of $\phi \in \rmJ_{k, L}$
\begin{gather*}
  \phi(\tau, z)
=
  \sum_{\mu \in \ZZ^N \slashdiv L \ZZ^N} \theta_{L, \mu}(\tau, z) \, h_\mu(\tau)
\end{gather*}
we see that it suffices to prove the equality
\begin{gather}
\label{eq:prop:hecke_us_commutation}
  \theta_{L, \mu} (\tau, s z)
=
  \sum_{\mu' \in \ZZ^N \slashdiv s \ZZ^N} \theta_{L', s^\tr \mu + s^\tr L \mu'} (\tau, z)
\text{.}
\end{gather}
By passing to the Fourier expansion of $\theta_{L, \mu}$, this becomes an easy calculation.
\begin{align*}
  \sum_{r \equiv \mu \pmod{L \ZZ^N}} \hspace{-1em}
  q^{L^{-1}[r] \slashdiv 2} \zeta^{s^\tr r}
&=
  \sum_{s^\tr r \equiv s^\tr \mu \pmod{s^\tr L \ZZ^N}} \hspace{-1.5em}
  q^{L[s]^{-1}[s^\tr r] \slashdiv 2} \zeta^{s^\tr r}
\\
&=
  \sum_{\substack{\mu' \in \ZZ^N \slashdiv s \ZZ^N \\ r \equiv s^\tr \mu + s^\tr L \mu' \pmod{L[s] \ZZ^N}}} \hspace{-1.5em}
  q^{L[s]^{-1}[r] \slashdiv 2} \zeta^{r}
\text{.}
\end{align*}
\end{proof}

We next treat the operator $V_l$.  In the case $N = 1$, the action of $V_l$ on the Taylor expansion of $\phi$ is given by usual Hecke operators.  This result can be extended to arbitrary $N$.  In order to make a precise statement, we recall the operator $\cD_\nu$ given in~\cite{EZ85} for $0 \le \nu \in \ZZ$.  On Jacobi forms~$\phi$ of even weight~$k$ and index~$m$ it equals
\begin{gather*}
  \cD_{2 \nu} (\phi) \text{,}
:=
  \sum_{0 \le \mu \le \nu}
  \frac{(-1)^\mu \, (2 \nu)! \, (k + 2 \nu - \mu - 2)!}
       {\mu! \, (2 \nu - 2 \mu)! \, (k + \nu - 2)!} \;
  \big( \frac{\partial_z}{2 \pi i} \big)^{2 \nu - 2 \mu}
  \big( \frac{\partial_\tau}{2 \pi i} \big)^\mu \phi
\text{.}
\end{gather*}
If $k$ is odd, set
\begin{gather*}
  \cD_{2 \nu + 1} (\phi) \text{,}
:=
  \sum_{0 \le \mu \le \nu}
  \frac{(-1)^\mu \, (2 \nu + 1)! \, (k + 2 \nu - \mu - 1)!}
       {\mu! \, (2 \nu + 1 - 2 \mu)! \, (k + \nu - 1)!} \;
  \big( \frac{\partial_z}{2 \pi i} \big)^{2 \nu + 1 - 2 \mu}
  \big( \frac{\partial_\tau}{2 \pi i} \big)^\mu \phi
\text{.}
\end{gather*}
In all cases that are not covered by the above definition we have $\cD_\nu(\phi) := 0$.

\begin{proposition}
\label{prop:hecke_vl_commutation}
For all $\Mat{N, N'}{\ZZ}$, $1 \le N' \le N$, we have
\begin{gather*}
  \phi[s] \big|_{k, L[s]}\, V_l
=
  \big( \phi \big|_{k, L[s]}\, V_l \big)[s]
\text{.}
\end{gather*}
In particular, the following diagram commutes for all $s \in \ZZ^N$ and all $0 \le \nu \in \ZZ$:
\begin{gather*}
\xymatrix{
  \rmJ_{k, L} \ar[d]_{\cD_\nu \,\circ\, (\,\cdot\,)[s]} \ar[rr]^{V_l} && \rmJ_{k, l L} \ar[d]^{\cD_\nu \, \circ\,  (\,\cdot\,)[s]} \\
  \rmM_{k + \nu} \ar[rr]^{T_l} && \rmM_{k + \nu}
}
\end{gather*}
\end{proposition}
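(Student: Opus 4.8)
The plan is to prove the displayed equality first, essentially by inspecting~\eqref{eq:def_hecke_vl}, and then to read off the commutative diagram from it by reducing to the classical one-variable situation of~\cite{EZ85}. For the equality $\phi[s]\big|_{k,L[s]}\,V_l=\big(\phi\big|_{k,L}\,V_l\big)[s]$ I would note that the sum in~\eqref{eq:def_hecke_vl} runs over a set of representatives of $\Gamma\backslash\{\gamma\in\Mat{2}{\ZZ}:\det\gamma=l\}$ that does not depend on the index, so that both sides are sums over the \emph{same} family of matrices $\gamma=\left(\begin{smallmatrix}a&b\\c&d\end{smallmatrix}\right)$, and it is then enough to compare the $\gamma$-summands at a point $(\tau,z')\in\HS_{1,N'}$. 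That comparison reduces to the two elementary identities $L[s][z']=(s z')^\tr L\,(s z')=L[s z']$ and $\phi[s]\bigl(\gamma\tau,\tfrac{l z'}{c\tau+d}\bigr)=\phi\bigl(\gamma\tau,\tfrac{l(s z')}{c\tau+d}\bigr)$, after which the two summands agree termwise. As a byproduct, since $V_l$ is a finite average of slash operators, $\phi\big|_{k,L}\,V_l$ is an honest Jacobi form whenever $\phi$ is, so the diagram is well posed.

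For the diagram I would specialise to $s\in\ZZ^N$, so that $L[s]=s^\tr L\,s$ is a scalar index and, by Lemma~\ref{la:restriction_of_jacobi_forms}, $\phi[s]\in\rmJ_{k,L[s]}$ while $\big(\phi\big|_{k,L}\,V_l\big)[s]\in\rmJ_{k,l L[s]}$. The equality above rewrites the clockwise composite $\cD_\nu\big(\big(\phi\big|_{k,L}\,V_l\big)[s]\big)$ as $\cD_\nu\big(\phi[s]\big|_{k,L[s]}\,V_l\big)$, so that commutativity of the diagram becomes the single-variable statement
\begin{gather*}
  \cD_\nu\big(\psi\big|_{k,m}\,V_l\big)=\cD_\nu(\psi)\big|\,T_l,\qquad\psi\in\rmJ_{k,m},
\end{gather*}
applied with $\psi=\phi[s]$, $m=L[s]$, and $T_l$ the Hecke operator on weight-$(k+\nu)$ elliptic modular forms. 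For this I would invoke (or, if need be, reprove by comparing Fourier coefficients via the explicit shape of $\cD_\nu$ on the $c(n,r)$ together with the coefficient formulas for $V_l$ and for the classical $T_l$) the fact that $\cD_\nu$ intertwines $V_l$ on scalar-index Jacobi forms with $T_l$ on elliptic modular forms; this is the classical Hecke equivariance of the development operators, essentially contained in~\cite{EZ85}.

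The one genuinely non-formal input is this last, purely one-variable, assertion, and within it the care needed to match normalisations: the operators in~\eqref{eq:def_hecke_vl} are the \emph{rescaled} ones (a factor $l^{k}$ rather than the $l^{k-1}$ of~\cite{EZ85}), so that factor has to be tracked and the classical $T_l$ normalised accordingly for the diagram to commute on the nose. Everything multivariable is formal and is exhausted by the first step; no input beyond~\eqref{eq:def_hecke_vl}, Lemma~\ref{la:restriction_of_jacobi_forms}, and~\cite{EZ85} is needed.
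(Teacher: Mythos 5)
Your proposal is correct and follows essentially the same route as the paper: the first identity is read off directly from the definition~\eqref{eq:def_hecke_vl} (the coset representatives do not depend on the index, and $L[s][z']=L[sz']$), and the diagram is then reduced via that identity to the scalar-index statement, which is exactly the corollary on page~45 of~\cite{EZ85}. Your additional remark about tracking the $l^{k}$ versus $l^{k-1}$ normalisation is a sensible point of care that the paper's one-line proof leaves implicit.
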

\begin{proof}
The first claim follows immediatelly from the definition of $V_l$.  The second claim is then a consequence of the corollary on page~45 of~\cite{EZ85}.
\end{proof}

The action of $V_l$ on vector valued modular forms is involved.  Let $\cL'$ be a lattice with Gram matrix $l L$, that is, a rescaled version of $\cL$.  When identifying both $\cL^\#$ and $(\cL')^\#$ with $\ZZ^N$, then the multiplication by~$l$ map induces an embedding of the module $D = \disc\, \cL$ into $D' = \disc\, \cL'$;  The quadratic form is not preserved.  Indeed, when identifying $\mu \in \disc\, \cL$ with a corresponding vector in $\ZZ^N$, then $l \mu$ is unique up $l L \ZZ^N$.  Hence $l \mu$ corresponds to a well-defined element of $\disc\, \cL'$.  We have
\begin{gather*}
  q_\cL(\mu)
\equiv
  L^{-1}[\mu]
=
  (l L)^{-1} [l \mu] \slashdiv l
\equiv
  q_{\cL'}(l \mu) \slashdiv l
\text{.}
\end{gather*}
There are further multiplication maps from $D$ to $D'$, which are only well-defined up to submodules of $\ZZ^N \slashdiv l \ZZ^M$.  Let $d \isdiv l$, then $d \mu$ gives an element of $D'$ that is well-defined up to elements~$\mu'$ in the kernel $\ker( \, \cdot\, \frac{l}{d}\, |_{D'} )$ of the multiplication by $d$ map on $D'$.

Define an operator that restricts Fourier expansions on $\HS$:
\begin{gather}
  \sum_{n \in \QQ} a(n) q^n \big|\, {\rm res}(d, n')
=
  \sum_{n \in d \ZZ + n'} a(n) \, q^n
\text{.}
\end{gather}
With this restriction operator at hand, we can define a new Hecke operator for vector valued modular forms.  Let $D$ be a discriminant form and $D \cong \disc\, \cL$ for some positive definite lattice~$\cL$.  Then we set
\begin{gather}
\label{eq:def:hecke_vlL_vector_valued}
  f\big|\, \mathrm{sc}_l(\cL)
=
  \sum_{d \isdiv l}
  \big( \frac{l}{d} \big)^{k - 1}
  \sum_{\substack{ \mu \in \disc\, \cL \\ \mu' \in \ker( \, \cdot d\, |_{D'} )}}
    \Big( f_{\mu} |\, {\rm res}\big(d, \tfrac{d^2}{l} q_{D'}(\tfrac{l}{d} \mu + \mu') \big) \Big)  \big( l \tau \slashdiv d^2 \big) \;
  \frake_{\frac{l}{d} \mu + \mu'}
\text{.}
\end{gather}

At this stage, ${\rm sc}_l( \cL )$ does not only depend on $D$ but also on a lift of $D$ to a lattice~$\cL$.
\begin{proposition}
The following diagram computes:
\begin{gather*}
\xymatrix{
  \rmJ_{k, L} \ar[rr]^{V_l} \ar[d]^{\Theta_L} && \rmJ_{k, L'} \ar[d]^{\Theta_{L'}} \\
  \rmM_{k - \frac{N}{2}}({\check \rho}_{\cL}) \ar[rr]^{\mathrm{sc}_l(\cL)} && \rmM_{k - \frac{N}{2}}({\check \rho}_{\cL'})
}
\end{gather*}
\end{proposition}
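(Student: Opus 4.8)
The plan is to argue along the lines of the proof of Proposition~\ref{prop:hecke_us_commutation}. All four maps in the diagram are linear, so it suffices to feed the theta decomposition $\phi = \sum_{\mu \in \disc\, \cL} \theta_{L,\mu}\, h_\mu$ of an arbitrary $\phi \in \rmJ_{k,L}$ into the explicit formula~\eqref{eq:def_hecke_vl} for $V_l$ and to read off that the components of the resulting theta decomposition of $\phi \big|_{k,L}\, V_l \in \rmJ_{k, L'}$ are exactly $(h_\mu)_\mu \,\big|\, \mathrm{sc}_l(\cL)$. First I would replace the sum over $\Gamma \backslash \{ \gamma \in \Mat{2}{\ZZ} \,:\, \det\gamma = l \}$ by the upper triangular coset representatives $\left(\begin{smallmatrix} a & b \\ 0 & d \end{smallmatrix}\right)$ with $a d = l$, $a, d > 0$, $0 \le b < d$. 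For these $c = 0$, so the exponential factor in~\eqref{eq:def_hecke_vl} is trivial and $\tfrac{l z}{c\tau+d} = a z$; thus $\phi \big|_{k,L}\, V_l$ becomes a finite sum over the triples $(a, d, b)$ of the shifted pullbacks $\phi\big( \tfrac{a\tau+b}{d},\, a z \big)$, each carrying an elementary scalar. Substituting the theta decomposition together with the Fourier expansions $\theta_{L,\mu}(\tau',z') = \sum_{r \in \mu + L\ZZ^N} e\big( L^{-1}[r]\, \tau' \slashdiv 2 \big)\, e(r^\tr z')$ and $h_\mu(\tau') = \sum_n a_\mu(n)\, e(n\tau')$ reduces everything to an identity of Fourier coefficients.

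The heart of the computation is to sort this sum by its $\zeta$-exponent. Each term has $\zeta$-exponent $a r$ with $r \in \mu + L\ZZ^N$. For a fixed divisor $d \isdiv l$ (so $a = l/d$) and fixed $\mu$, I would check that the residue classes modulo $l L\ZZ^N$ reached by these vectors are precisely the classes $\tfrac{l}{d}\mu + \mu'$ with $\mu' \in \ker\big( \,\cdot\, d \,|_{D'} \big)$, $D' = \disc\, \cL'$, and that for each such class $\nu$ the relevant $r$ correspond bijectively via $r \mapsto a r$ to the vectors of the class $\nu$; this is exactly the index set of the inner sum in~\eqref{eq:def:hecke_vlL_vector_valued}. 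The numerical identity $(l L)^{-1}[a r]\slashdiv 2 = \tfrac{a}{d}\, L^{-1}[r] \slashdiv 2$ is what makes the $z$-dependent part assemble into $\theta_{L',\nu}(\tau, z)$, leaving a $\tau$-dependence which is $h_\mu$ pulled back along $\tau \mapsto l\tau \slashdiv d^2 = a\tau/d$, as in~\eqref{eq:def:hecke_vlL_vector_valued}.

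It remains to carry out the inner sum over $b$. Each summand carries a factor $e\big( M \tfrac{a\tau+b}{d} \big)$ with $M = n + L^{-1}[r] \slashdiv 2 \in \ZZ$ (the integrality is part of the theta decomposition), so $\sum_{b=0}^{d-1} e(M b / d)$ equals $d$ if $d \isdiv M$ and vanishes otherwise. Because $L$ has even diagonal, $L^{-1}[r] \slashdiv 2 \bmod d$ depends only on $r \bmod d L\ZZ^N$, hence only on $\nu$; so the surviving congruence on $n$ is exactly the restriction cut out by $\mathrm{res}\big( d, \tfrac{d^2}{l}\, q_{D'}(\tfrac{l}{d}\mu + \mu') \big)$ in~\eqref{eq:def:hecke_vlL_vector_valued}, once one passes between the quadratic forms of $\cL$ and $\cL'$ via $(l L)^{-1}[\,\cdot\,] \slashdiv 2 = L^{-1}[\,\cdot\,] \slashdiv (2l)$ and tracks the induced shift and sign (fixed by the conventions of $\check\rho_{\cL'}$). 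Collecting the scalar from~\eqref{eq:def_hecke_vl}, the $(c\tau+d)^{-k} = d^{-k}$, and the factor $d$ produced by the $b$-sum reproduces the weight factor $(l/d)^{k-1}$ of~\eqref{eq:def:hecke_vlL_vector_valued}. Summing over $d \isdiv l$ and over $\mu$ then gives the assertion.

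The hard part will be the discriminant-form bookkeeping of the second and third paragraphs. Since the ``multiplication by $l/d$'' maps $D \to D'$ do not preserve the quadratic forms --- which is exactly why $\mathrm{sc}_l(\cL)$ depends on the chosen lift $\cL$ of $D$ rather than on $D$ alone --- one must keep careful control of which representatives of $\nu$ occur, of the value of $q_{D'}(\nu)$ modulo the relevant lattice, and of the fact that the $\mathrm{res}$-shift depends on $r$ only through $\nu$. By contrast, the $\SL{2}(\ZZ)$-coset rearrangement, the identity $(l L)^{-1}[a r] = \tfrac{a}{d}\, L^{-1}[r]$, and the geometric series over $b$ are all routine.
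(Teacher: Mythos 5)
Your proposal follows the same route as the paper's proof: choose the upper-triangular coset representatives $\left(\begin{smallmatrix} a & b \\ 0 & d \end{smallmatrix}\right)$, $ad=l$, $b \pmod{d}$, substitute the theta decomposition, and match Fourier expansions class by class via the character sum over $b$ — exactly the reduction the paper records before declaring the remaining computation analogous to the $U_s$ case, and your discriminant-form bookkeeping (the classes $\tfrac{l}{d}\mu+\mu'$ with $\mu'\in\ker(\,\cdot\,d\,|_{D'})$) matches the displayed identity there. The only point to watch is the overall scalar: with the prefactor $l^k$ in~\eqref{eq:def_hecke_vl} your collection of constants gives $l\cdot(l/d)^{k-1}$ rather than $(l/d)^{k-1}$, a $d$-independent discrepancy that reflects the paper's normalization of $V_l$ (the classical choice is $l^{k-1}$) rather than a flaw in your argument.
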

\begin{proof}
We choose $\left(\begin{smallmatrix} a & b \\ 0 & d \end{smallmatrix}\right)$, $ad = l$, $b \pmod{d}$ as representatives for $\gamma \in \Gamma \backslash \Mat{N}{\ZZ}$, $\det(\gamma) = l$.  Then it suffices to prove for each $d \isdiv l$ that
\begin{align*}
& {}
  \sum_{\substack{r \equiv \mu \pmod{L \ZZ^N} \\ 0 \le \Delta \in \QQ}}
  \Big(
    \sum_{b \pmod{d}}
    e\big( \tfrac{b}{d} \big( L^{-1}[r] \slashdiv 2 + \Delta \big) \big)
  \Big)
    {\td c}(\Delta, \mu) \,
    q^{\frac{l}{d^2} ( L^{-1}[r] \slashdiv 2 + \Delta )} \zeta^{\frac{l}{d^2} r}
\\[4pt]
= & {}
  \sum_{\mu' \in \ZZ^N \slashdiv d \ZZ^N}
  \sum_{\substack{ r \equiv \frac{l}{d} ( \mu + L \mu') \pmod{l L \ZZ^N} \\ \Delta \in d \ZZ - L^{-1}[\mu + L \mu'] \slashdiv 2 }} \hspace{-1em}
    {\td c}(\Delta, \mu) \,
    q^{(l L)^{-1}[r] \slashdiv 2 + \frac{l}{d^2} \Delta} \zeta^r
\text{.}
\end{align*}
The Fourier coefficients ${\td c}$ of a Jacobi forms were introduced in Section~\ref{sec:modularforms}.  The necessary calculations are analogous to those in the proof of Proposition~\ref{prop:hecke_us_commutation}.
\end{proof}

We end this section with a decomposition into newspaces which is finer than the one employed in the next section.  Set
\begin{align*}
  \rmJ_{k, L}^{UV \rm old}
& =
  \sum_{\substack{{\td s} \in \Mat{N}{\ZZ} \\ L = L'[{\td s}]}} \,
  \sum_{\substack{l \isdiv L' \\ l \ne 1 \text{ or } \det({\td s}) \ne 1}} \!\!\!
  \rmJ_{k, L'} \big|_{k, L' \slashdiv l}\, V_l U_{\td s}
\text{,}
\\[6pt]
  \rmJ_{k, L}^{UV \rm new}
& =
  \big( \rmJ_{k,L}^{UV \rm old} \big)^\perp
\subseteq
  \rmJ_{k, L}
\text{,}
\end{align*}
where $l \isdiv L'$ means that $L' \slashdiv l \in \Mat{N}{\QQ}$ has integral entries and even diagonal entries, and ${\td s} = I_N$, if $\det({\td s}) = 1$.  The decomposition which generalized the newform decomposition in~\cite{EZ85} in the most straight forward way is
\begin{gather}
\label{eq:newform_decomposition}
  J_{k, L}
=
  \sum_{\substack{{\td s} \in \Mat{N}{\ZZ}\\ L = L'[{\td s}]}}\,
  \sum_{l \isdiv L'} \,
  \rmJ_{k, L' \slashdiv l}^{UV \rm new}
\text{,}
\end{gather}
where again ${\td s} = I_N$, if $\det({\td s}) = 1$.  We show that this decomposition is not a direct sum by giving a counter example.  Suppose it was.  Then one can recursively compute the dimension of $\rmJ_{k, L}^{UV \rm new}$.  Thus recursively define
\begin{gather*}
  \rmd^{UV \rm new}_{k, L}
=
  \dim \rmJ_{k, L}
  -
  \sum_{\substack{{\td s} \in \Mat{N}{\ZZ} \\ L = L'[{\td s}]}} \,
  \sum_{\substack{l \isdiv L' \\ l \ne 1 \text{ or } \det({\td s}) \ne 1}} \!\!\!
  \rmd^{UV \rm new}_{k, L' \slashdiv l}
\text{,}
\end{gather*}
where once more ${\td s} = I_N$, if $\det({\td s}) = 1$.
\begin{proposition}
Let $L = \left(\begin{smallmatrix} 8 & 4 \\ 4 & 8 \end{smallmatrix}\right)$ and $k = 4$.  Then we have $\rmd^{UV \rm new}_{k, L} = -1 < 0$.  In particular, for these values of $L$ and $k$, (\ref{eq:newform_decomposition}) is not a direct sum.
\end{proposition}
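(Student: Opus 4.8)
The plan is to reduce the claim to a finite evaluation of the dimension formula of Theorem~\ref{thm:dimension_for_jacobi_forms}. Note first that the recursion defining $\rmd^{UV \rm new}_{k, \cdot}$ terminates: whenever $(\td s, l) \ne (I_N, 1)$ one has $\det(L' \slashdiv l) = \det(L') \slashdiv l^N < \det L$, because $\det L = \det(\td s)^2 \det L'$. Since the operators $V_l$ and $U_{\td s}$ are injective on Jacobi forms, an induction on $\det L$ shows that if (\ref{eq:newform_decomposition}) is a direct sum then $\dim \rmJ_{k, L}^{UV \rm new} = \rmd^{UV \rm new}_{k, L}$, hence $\rmd^{UV \rm new}_{k, L} \ge 0$, for every even positive definite rank-$2$ lattice. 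So it suffices to exhibit one pair $(L, k)$ with $\rmd^{UV \rm new}_{k, L} < 0$; moreover, once one checks that for every index form strictly below $L$ in the recursion the decomposition (\ref{eq:newform_decomposition}) really is direct --- which here is immediate, as their old spaces are images under $V_l$ or $U_{\td s}$ of lattices that are themselves entirely new --- the subtracted quantities in the recursion for $\rmd^{UV \rm new}_{k, L}$ become genuine dimensions, so that directness of (\ref{eq:newform_decomposition}) for $L$ would force $\dim \rmJ_{k,L}^{UV \rm new} = \rmd^{UV \rm new}_{k, L}$ and the failure of directness is pinned on $L$ itself.

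Specialize to $L = \left(\begin{smallmatrix}8 & 4\\4 & 8\end{smallmatrix}\right) = A_2(4)$ and $k = 4$ (so $k \ge 2 + \tfrac N2 = 3$ and the dimension formula applies). Since $\det L = 48 = \det(\td s)^2 \det L'$, every $\td s$ with $L = L'[\td s]$ has $\det \td s \in \{1, 2, 4\}$: either $\cL' = \cL$, or $\cL'$ is an even index-$2$ overlattice of $\cL$, or $\cL'$ is an even index-$4$ overlattice. A direct computation shows the index-$2$ even overlattices of $A_2(4)$ are all isometric to $\left(\begin{smallmatrix}2 & 0\\0 & 6\end{smallmatrix}\right)$ and that the unique index-$4$ even overlattice is $A_2 = \left(\begin{smallmatrix}2 & 1\\1 & 2\end{smallmatrix}\right)$ (indeed $A_2(4) = 2A_2 \subset A_2$). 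Together with the admissible rescalings ($l \isdiv L$ allows $l \in \{1, 2, 4\}$, with $L \slashdiv 2 = A_2(2) = \left(\begin{smallmatrix}4 & 2\\2 & 4\end{smallmatrix}\right)$ and $L \slashdiv 4 = A_2$; $l \isdiv A_2(2)$ allows $l \in \{1,2\}$ with $A_2(2) \slashdiv 2 = A_2$; while $A_2$ and $\left(\begin{smallmatrix}2 & 0\\0 & 6\end{smallmatrix}\right)$ admit only $l = 1$), the recursion closes inside the four rank-$2$ index forms $A_2$, $A_2(2)$, $\left(\begin{smallmatrix}2 & 0\\0 & 6\end{smallmatrix}\right)$, $L$. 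For each of these I would list the $\td s$ that occur, under the normalization $\td s = I_N$ when $\det \td s = 1$, being careful to count every summand $\rmJ_{k, L'}\big|_{k, L' \slashdiv l}\, V_l U_{\td s}$ of (\ref{eq:newform_decomposition}) exactly once; this amounts to enumerating the sublattices of $\cL'$ isometric to $\cL$ modulo $\Aut(\cL')$.

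Finally I would compute $\dim \rmJ_{4, M}$ for $M \in \{A_2,\, A_2(2),\, \left(\begin{smallmatrix}2 & 0\\0 & 6\end{smallmatrix}\right),\, L\}$ from Theorem~\ref{thm:dimension_for_jacobi_forms}, i.e.\ determine the $\alpha$-invariants of $e^{\pi i k \slashdiv 2}\, \widetilde{\rho}_{4, M}(S)$, of $e^{-\pi i k \slashdiv 3}\, \widetilde{\rho}_{4, M}(ST)^{-1}$ and of $\widetilde{\rho}_{4, M}(T)$ for the finite Weil representations $\widetilde{\rho}_{4, M}$, and substitute into the recursion: $\rmd^{UV \rm new}_{4, A_2} = \dim \rmJ_{4, A_2}$ (base case, $A_2$ having prime discriminant and admitting no nontrivial rescaling or overlattice), then $\rmd^{UV \rm new}_{4, A_2(2)}$ and $\rmd^{UV \rm new}_{4, \left(\begin{smallmatrix}2 & 0\\0 & 6\end{smallmatrix}\right)}$, and then $\rmd^{UV \rm new}_{4, L}$. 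The arithmetic is expected to return $\rmd^{UV \rm new}_{4, L} = -1$, with the intermediate values nonnegative, which is the assertion; and then directness of (\ref{eq:newform_decomposition}) for $L$ would force $\dim \rmJ_{k, L}^{UV \rm new} = -1$, which is absurd.

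The main difficulty is not any individual estimate but the combination of bookkeeping and spectral input: (i) enumerating the $\td s$ with $L = L'[\td s]$ up to the correct equivalence so that each old-form summand of (\ref{eq:newform_decomposition}) is weighted with the right multiplicity; (ii) identifying which $l$ satisfy $l \isdiv L'$; and, above all, (iii) evaluating the rank-$2$ dimension formula, which hinges on the multiplicities of the twelfth-root-of-unity eigenvalues of $\widetilde{\rho}_{k, L}(S)$ and $\widetilde{\rho}_{k, L}(ST)$. For (iii) I would use the interval-arithmetic computation with a posteriori certification described in the remark following Theorem~\ref{thm:dimension_for_jacobi_forms}, as carried out in the reference implementation~\cite{sageticket-jacobi}.
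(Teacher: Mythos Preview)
Your approach is the same as the paper's: both reduce the claim to a finite machine computation, and indeed the paper's entire proof is the single sentence ``The corresponding code can be found at~\cite{raumhomepage}.'' Your write-up is a faithful unpacking of what that code does --- enumerate the lattices appearing in the recursion tree, evaluate $\dim \rmJ_{4,M}$ via Theorem~\ref{thm:dimension_for_jacobi_forms}, and plug into the recursion --- and you end by invoking the same reference implementation for the spectral step. So there is no substantive difference in method.

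Two small points where your outline could be tightened. First, the enumeration of the $\td s$ in the recursion is more delicate than you indicate: the paper's sum is over $\td s$ (not over isometry classes of overlattices), so you must count each admissible $\td s$ with its correct multiplicity, and your phrase ``modulo $\Aut(\cL')$'' is the right idea but needs to be reconciled with the literal indexing in the paper's formula. Second, your claim that directness at the lower levels is ``immediate'' is a little quick: for instance $\left(\begin{smallmatrix}2 & 0\\0 & 6\end{smallmatrix}\right)$ does admit $A_2$ as an index-$2$ overlattice (take $\td s = \left(\begin{smallmatrix}1 & 1\\0 & -2\end{smallmatrix}\right)$), so its oldspace is nontrivial and one must still check injectivity there before pinning the failure on $L$ itself. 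Neither point affects the overall strategy, and both are exactly the kind of bookkeeping the cited code handles.
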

\begin{proof}
The corresponding code can be found at~\cite{raumhomepage}.
\end{proof}

\section{Modification of Algorithm~\ref{alg:jacobiforms}}
\label{sec:newforms_algorithm}

In this section, we give an improved version of Algorithm~\ref{alg:jacobiforms}, which makes use of a decomposition of $\rmJ_{k, L}$.  In total analogy to Section~3.1 of~\cite{Br12}, we set
\begin{align*}
  \rmJ_{k, L}^{U \rm old}
& =
  \sum_{\substack{{\td s} \in \Mat{N}{\ZZ},\; \det({\td s}) \ne 1 \\ L = L'[{\td s}]}}
  \rmJ_{k, L'} \big|_{k, L'}\, U_{\td s}
\text{,}
\\[6pt]
  \rmJ_{k, L}^{U \rm new}
& =
  \big( \rmJ_{k,L}^{U \rm old} \big)^\perp
\subseteq
  \rmJ_{k, L}
\text{.}
\end{align*}
The orthogonal complement is taken with respect to the Petersson scalar product
\begin{gather*}
  \big\langle f, g \big\rangle
=
  \int_{\SL{2}{\ZZ} \backslash \HS} \;
  \sum_{\mu \in D} f_{\mu}(\tau) \ov{g_\mu(\tau)} \; \frac{dx \, dy}{y^2}
\text{.}
\end{gather*}

The central result of this sections says that oldforms can be recognized by their restrictions to Jacobi forms of scalar index.
\begin{theorem}
\label{thm:oldforms_by_restriction}
Let ${\td s} \in \Mat{N}{\ZZ}$ and assume that $L = L'[{\td s}]$ for some $L' \in \Mat{N}{\ZZ}$ with even diagonal entries.  Then there is an explicit set $\cS \subset \ZZ^N$ such that the following holds.

For $s \in \cS$, let $l_s$ be the greatest common divisor of the entries of ${\td s} s$.  Suppose that $0 \ne \phi \in \rmJ_{k, L}$.  If $\phi[s] \in \rmJ_{k, L[s] / l_s^2} \big|_{k, L[s] / l_s^2} \, U_{l_s}$ for all $s \in \cS$, then $\phi \in \rmJ_{k, L'} \big|_{k, L'}\, U_{\td s}$.
\end{theorem}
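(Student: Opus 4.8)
The plan is to first characterise the oldspace $\rmJ_{k,L'}\big|_{k,L'}\,U_{\td s}$ by a support condition on Fourier coefficients, and then to reduce the assertion to a combinatorial statement about formal Fourier expansions, which I would prove by induction on the precision.

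The first ingredient is a \emph{support lemma}, presumably folklore: if $L=L'[{\td s}]$ with ${\td s}\in\Mat{N}{\ZZ}$ of nonzero determinant (automatic here, since $L$ is positive definite), then $\phi\in\rmJ_{k,L}$ with Fourier expansion $\sum c(n,r)\,q^n\zeta^r$ lies in $\rmJ_{k,L'}\big|_{k,L'}\,U_{\td s}$ if and only if $c(n,r)=0$ whenever $r\notin{\td s}^\tr\ZZ^N$. The implication "$\Rightarrow$" is the elementary coefficient comparison already used in the proof of Proposition~\ref{prop:hecke_us_commutation}. For "$\Leftarrow$", put $\psi(\tau,w):=\phi(\tau,{\td s}^{-1}w)$: the support hypothesis rewrites the expansion as $\psi(\tau,w)=\sum_{r'\in\ZZ^N}c(n,{\td s}^\tr r')\,q^n\exp(2\pi i\,w^\tr r')$, so $\psi$ is $\ZZ^N$-periodic in $w$, i.e.\ invariant under $(I_2,(0,\mu'))$ for all $\mu'\in\ZZ^N$; a substitution using $L[{\td s}^{-1}w]=L'[w]$ shows $\psi$ is equivariant of weight $k$ and index $L'$ under the generators $S$ and $T$ of $\SL{2}(\ZZ)$, and the growth bound transfers because $c(n,{\td s}^\tr r')\neq0$ forces $n\ge(L')^{-1}[r']/2$. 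Since $\SL{2}(\ZZ)$ together with $\{0\}\times\ZZ^N\subset\ZZ^2\otimes\ZZ^N$ generate $\Gamma^\rmJ$, this gives $\psi\in\rmJ_{k,L'}$ and $\phi=\psi\big|_{k,L'}\,U_{\td s}$. The same lemma with $N=1$ identifies, for each $s\in\cS$, the hypothesis $\phi[s]\in\rmJ_{k,L[s]/l_s^2}\big|\,U_{l_s}$ with the vanishing of the $(n,m)$-coefficient $\sum_{\rho\in\ZZ^N,\,s^\tr\rho=m}c(n,\rho)$ of $\phi[s]$ for all $m\not\equiv0\pmod{l_s}$, where ${\td s}s=l_s t_s$ with $t_s$ primitive and $L[s]=L'[{\td s}s]=l_s^2\,L'[t_s]$.

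With the support lemma in hand, the theorem amounts to the following purely combinatorial claim. Let $v=\sum c(n,r)\,q^n\zeta^r$ be a formal expansion satisfying the symmetry relations~(\ref{eq:higherjacobiforms_fourier_relation1}),~(\ref{eq:higherjacobiforms_fourier_relation2}) and the positivity constraint "$c(n,r)=0$ if $2|L|n-L^\#[r]<0$"; if $\sum_{s^\tr\rho=m}c(n,\rho)=0$ for all $s\in\cS$, all $n$, and all $m\not\equiv0\pmod{l_s}$, then $c(n,r)=0$ for all $r\notin{\td s}^\tr\ZZ^N$. I would prove this by induction on $n$; positive definiteness of $L$ is what makes the induction finite, since for fixed $n$ only finitely many $r$ are unconstrained. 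The crucial observation is that $r\in{\td s}^\tr\ZZ^N$ forces $s^\tr r=({\td s}s)^\tr r'\in l_s\ZZ$ automatically (writing $r={\td s}^\tr r'$); hence in each equation $\sum_{s^\tr\rho=m}c(n,\rho)=0$ with $m\not\equiv0\pmod{l_s}$ every contributing $\rho$ already lies outside ${\td s}^\tr\ZZ^N$. Rewriting, via~(\ref{eq:higherjacobiforms_fourier_relation1}), each $c(n,\rho)$ with $\rho$ non-minimal in its class modulo $L\ZZ^N$ as a coefficient at a strictly smaller level of $n$ — which vanishes by the inductive hypothesis, its index being again outside ${\td s}^\tr\ZZ^N$ — these equations collapse to a linear system in the remaining coefficients $\td c(\,\cdot\,,\mu)$ indexed by the residue classes $\mu\in\disc\cL$ with $\mu\notin{\td s}^\tr\ZZ^N/L\ZZ^N$, whose coefficient matrix is, up to the multiplicities occurring in $F(\cR,\cS)$, the relevant restriction-incidence matrix.

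Finally, for $\cS$ I would take the columns $\mathrm{adj}({\td s})e_1,\dots,\mathrm{adj}({\td s})e_N$ of the adjugate of ${\td s}$ — for these, ${\td s}s=\det({\td s})e_i$, so $l_s=|\det{\td s}|$ and $s^\tr r\in l_s\ZZ\iff({\td s}^{-\tr}r)_i\in\ZZ$; hence each residue class outside ${\td s}^\tr\ZZ^N/L\ZZ^N$ is detected by at least one of them — together with finitely many further vectors of the form $\mathrm{adj}({\td s})a$, with $a$ chosen in the spirit of the proof of Lemma~\ref{la:set_of_restriction_vectors} (so that $l_s=|\det{\td s}|\cdot\gcd(a)$ stays $>1$), enough to make the restriction-incidence matrix have full column rank on those classes; enlarging $\cS$ this way is harmless, since any $s\in\ZZ^N$ satisfies $l_s\mid s^\tr r$ for all $r\in{\td s}^\tr\ZZ^N$. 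The induction then forces all coefficients $\td c(\,\cdot\,,\mu)$ with $\mu\notin{\td s}^\tr\ZZ^N/L\ZZ^N$ to vanish at every level, and the support lemma concludes. I expect the genuine obstacle to be precisely the construction of $\cS$: producing a \emph{single} finite set, depending only on $L$ and ${\td s}$, that simultaneously detects non-membership in ${\td s}^\tr\ZZ^N$ through the congruences $s^\tr r\equiv0\pmod{l_s}$ (so that, with $l_s>1$, the relevant rows are nonvacuous) and separates all residue classes outside ${\td s}^\tr\ZZ^N/L\ZZ^N$, and does so uniformly over all precisions $n$ — even though the pullback sums $\sum_{s^\tr\rho=m}c(n,\rho)$ mix coefficients of different discriminant. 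The induction on $n$ is exactly what reduces this to a finite, full-rank problem over the fixed group $\disc\cL$.
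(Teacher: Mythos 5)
Your overall skeleton coincides with the paper's: characterise $\rmJ_{k,L'}\big|_{k,L'}U_{\td s}$ by the support condition $c(n,r)=0$ for $r\notin{\td s}^\tr\ZZ^N$, translate the hypothesis on $\phi[s]$ into the vanishing of the restricted coefficients $\sum_{s^\tr\rho=m}c(n,\rho)$ for $m\not\equiv 0\pmod{l_s}$, and induct on $n$, using~\eqref{eq:higherjacobiforms_fourier_relation1} and~\eqref{eq:higherjacobiforms_fourier_relation2} to push all non-minimal representatives down to levels $n'<n$ where the induction hypothesis kills them. All of that matches the paper's argument. The problem is that the theorem asserts the existence of an \emph{explicit} set $\cS$, and your construction of $\cS$ is exactly the point you leave open: you candidly call it ``the genuine obstacle'' and offer only that further vectors $\mathrm{adj}({\td s})a$ ``in the spirit of Lemma~\ref{la:set_of_restriction_vectors}'' should make the incidence matrix have full column rank. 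As you yourself observe, a vector that separates the residue classes need not detect non-membership in ${\td s}^\tr\ZZ^N$: for a bad class $r$ one can perfectly well have $a^\tr{\td s}^{-\tr}r\in\ZZ$ even though ${\td s}^{-\tr}r\notin\ZZ^N$, in which case the congruence $s^\tr r\equiv 0\pmod{l_s}$ holds and the corresponding row of your system is simply absent. Your proposal does not close this loop, so as it stands the full-column-rank claim is unsubstantiated.

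The paper closes it by reversing the quantifiers, via the strong restriction sets $\cS^{\rm str}(r)$ of Lemma~\ref{la:strong_set_of_restriction_vectors}: for each $r\in\cR$ one takes a finite set $\{s, hs+v_1,\dots,hs+v_N\}$ that \emph{spans} $\ZZ^N$ and \emph{every} element of which separates $r$ from all of $\bigcup_{r''}{\rm neigh}(r'')\cup{\rm neigh}(-r'')$, and then sets $\cS=\bigcup_{r\in\cR}l\,{\td s}^{-1}\cS^{\rm str}(r)$ with $l\ZZ^N\subseteq{\td s}\ZZ^N$. The argument is then: if $c(n,r)\ne 0$, the separation property makes the $(n,s^\tr r)$-coefficient of $\phi[s]$ equal to $c(n,r)\ne 0$ for \emph{every} $s=l{\td s}^{-1}s'$ with $s'\in\cS^{\rm str}(r)$, so the oldform hypothesis forces $l_s\isdiv s^\tr r$, i.e.\ $s'^{\tr}{\td s}^{-\tr}r\in\ZZ$, for a spanning set of $s'$ --- whence ${\td s}^{-\tr}r\in\ZZ^N$. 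No single vector ever needs to both separate and detect; it is the spanning property of the whole family of separating vectors that does the detecting. You could likely also repair your version directly (a single rational $a$ avoiding the finitely many relevant hyperplane translates would simultaneously separate and detect), but that argument is not in your write-up, and without it the proof is incomplete precisely at the step the theorem is about.
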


The first assumption of Theorem~\ref{thm:oldforms_by_restriction} can also be checked on restrictions.
\begin{proposition}
\label{prop:sublattice_by_restriction}
Let ${\td s} \in \Mat{N}{\ZZ}$.  Then there is an explicit set $\cS \subset \ZZ^N$ such that the following holds.

For $s \in \cS$, let $l_s$ be the greatest common divisor of the entries of ${\td s} s$.  If $2 l_s^2 \isdiv L[s]$ for all $s \in \cS$, then $L = L'[{\td s}]$ for some $L' \in \Mat{N}{\ZZ}$ with even diagonal.
\end{proposition}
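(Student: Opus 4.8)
The plan is to translate the divisibility conditions $2 l_s^2 \isdiv L[s]$ into conditions on a single rational quadratic form, and then to exhibit $\cS$ by a periodicity argument. I may assume that $\tilde s$ is invertible over $\QQ$; this is the case relevant for Theorem~\ref{thm:oldforms_by_restriction}, and if $\tilde s$ is singular one simply adjoins to $\cS$ a primitive vector of $\ker \tilde s \cap \ZZ^N$, for which the hypothesis cannot hold once $L$ is positive definite. Set $L' := (\tilde s^\tr)^{-1} L\, \tilde s^{-1}$; this is the unique (and automatically symmetric) matrix with $L'[\tilde s] = L$, so $L = L'[\tilde s]$ for some integral $L'$ with even diagonal precisely when this particular $L'$ is integral with even diagonal. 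The first thing I would record is the elementary equivalence: a rational symmetric matrix $L'$ is integral with even diagonal if and only if $L'[w] \in 2\ZZ$ for every $w \in \ZZ^N$; indeed, evaluation at $w = e_i$ gives $L'_{ii} \in 2\ZZ$, evaluation at $w = e_i + e_j$ then gives $L'_{ij} \in \ZZ$, and the converse is immediate. Since $L'[w] = L[\tilde s^{-1} w]$, the goal becomes to show that the hypotheses force $L[\tilde s^{-1} w] \in 2\ZZ$ for all $w \in \ZZ^N$.

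Next I would match a single restriction with a single such condition. For $s \in \ZZ^N$ write $\tilde s s = l_s t$ with $t \in \ZZ^N$ primitive; then $s = l_s\, \tilde s^{-1} t$, so $L[s] = l_s^2\, L[\tilde s^{-1} t]$ and $2 l_s^2 \isdiv L[s]$ if and only if $L[\tilde s^{-1} t] \in 2\ZZ$. Conversely, every primitive $t \in \ZZ^N$ is attained: taking $c$ to be the smallest positive integer with $c\,\tilde s^{-1} t \in \ZZ^N$ and setting $s_t := c\, \tilde s^{-1} t$, one has $\tilde s s_t = c\,t$, hence $l_{s_t} = c$, and the corresponding condition is again $L[\tilde s^{-1} t] \in 2\ZZ$. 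Finally, from $L[\tilde s^{-1}(g w)] = g^2 L[\tilde s^{-1} w]$ one sees that $L[\tilde s^{-1} w] \in 2\ZZ$ for all primitive $w$ already implies it for all $w \in \ZZ^N$. So it remains only to select finitely many primitive $w$.

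Here I would use periodicity. Let $d = |\det \tilde s|$, so that $d\, \tilde s^{-1}$ has integer entries (it is $\pm 1$ times the adjugate of $\tilde s$). Expanding $L\big[\tilde s^{-1}(w + 2 d^2 u)\big]$, one finds that the shift $b := 2d^2\tilde s^{-1}u$ lies in $2d\ZZ^N$, whence $L[\tilde s^{-1} w] \bmod 2\ZZ$ depends only on $w \bmod 2 d^2 \ZZ^N$. Thus it suffices to force $L[\tilde s^{-1} w] \in 2\ZZ$ for $w$ in the box $\{0, \dots, 2 d^2 - 1\}^N$; writing such a $w$ as $c w^\ast$ with $w^\ast$ primitive (still in the box) and using homogeneity once more, it suffices to do this for the primitive $w$ in the box. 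I would therefore define
\[
  \cS \;:=\; \big\{\, s_w \;:\; w \in \ZZ^N \text{ primitive},\ 0 \le w_i < 2 d^2 \text{ for all } i \,\big\}
\]
with $s_w$ as in the previous paragraph; this is explicit and finite. Running the argument backwards: if $2 l_s^2 \isdiv L[s]$ holds for all $s \in \cS$, then $L[\tilde s^{-1} w] \in 2\ZZ$ for every primitive $w$ in the box, hence (by periodicity together with the trivial implication $x \in 2\ZZ \Rightarrow c^2 x \in 2\ZZ$) for every $w \in \ZZ^N$, hence $L'$ is integral with even diagonal and $L = L'[\tilde s]$.

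The only delicate point, and the one I would be most careful with, is the bookkeeping connecting reduction modulo $2 d^2$ with primitivity: a primitive vector need not reduce to a primitive vector of the box, and a residue class need not contain a primitive vector at all, so one has to factor out the gcd and reinsert the homogeneity relation $L[\tilde s^{-1}(c w^\ast)] = c^2 L[\tilde s^{-1} w^\ast]$ at exactly the right steps. Everything else is routine: the integrality/even-diagonal equivalence, the identity $L[s] = l_s^2 L[\tilde s^{-1} t]$, and the periodicity expansion.
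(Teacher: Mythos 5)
Your proof is correct, and its backbone coincides with the paper's: both arguments reduce the claim to showing that $L' = L[\tilde s^{-1}]$ is integral with even diagonal entries, via the identity $L[c\,\tilde s^{-1}t] = c^2\,L'[t]$ together with the observation that the corresponding restriction vector $s$ has $l_s = c$ when $t$ is primitive, so that each hypothesis $2 l_s^2 \isdiv L[s]$ unwinds to $L'[t] \in 2\ZZ$. Where you diverge is in the final selection of the finite set of test vectors $t$. The paper simply takes $t \in \{e_i\} \cup \{e_i + e_{i+1}\}$, i.e.\ $\cS = \{l\tilde s^{-1}e_i\} \cup \{l\tilde s^{-1}(e_i+e_{i+1})\}$ with $l\ZZ^N \subseteq \tilde s\ZZ^N$, and reads off evenness of the diagonal and integrality of the off-diagonal entries directly. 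You instead run a periodicity argument modulo $2d^2$ and index $\cS$ by all primitive vectors in a box of side $2d^2$ --- a set of size exponential in $N$. That detour is sound (the mod-$2$ periodicity of $w \mapsto L[\tilde s^{-1}w]$ under shifts by $2d^2\ZZ^N$ checks out), but it is unnecessary: your own opening observation, that a rational symmetric matrix is integral with even diagonal iff $L'[w] \in 2\ZZ$ for $w = e_i$ and $w = e_i + e_j$, already hands you a finite sufficient family of primitive vectors, and you could have stopped there. One point in your favour: because your box contains every $e_i + e_j$, your argument controls \emph{all} off-diagonal entries, whereas the set as written in the paper only contains consecutive sums $e_i + e_{i+1}$ and so, taken literally, only forces integrality of the superdiagonal; the natural repair (all pairs $i < j$) is exactly what your preliminary equivalence supplies. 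Your treatment of singular $\tilde s$ (adjoining a primitive kernel vector, for which the divisibility hypothesis is vacuously false when $L$ is positive definite) is acceptable, though the paper implicitly assumes $\tilde s$ invertible throughout.
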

\begin{proof}
There is $0 < l \in \ZZ$ such that $l \ZZ^N \subseteq {\td s} \ZZ^N$.  We show that we can choose
\begin{gather*}
  \cS
=
  \{ l {\td s}^{-1} e_i \,:\, 1 \le i \le N \}
  \cup
  \{ l {\td s}^{-1} (e_i + e_{i + 1}) \,:\, 1 \le i \le N - 1 \}
\text{,}
\end{gather*}
where $e_i$ denotes the $i$-th unit vector.  We have $l_s = l^2$ for all $s \in \cS$, 

Suppose that $2 l_s^2 \isdiv L[s]$ for all $s \in \cS$.  We want to show that $L[{\td s}^{-1}]$ is integral with even diagonal entries.  Since $L[l {\td s}^{-1} s'] = l^2 L[{\td s}^{-1}][s']$ for all $s' \in \ZZ^N$, this follows immediately.
\end{proof}

\begin{proof}[Proof of Theorem~\ref{thm:oldforms_by_restriction}]
Recall the definition of $\cR$ and $\cS^{\rm str}(r)$, $r \in \cR$ from Section~\ref{sec:algorithm}.  We set
\begin{gather*}
  \cS
=
  \bigcup_{r \in \cR} \big\{ l {\td s}^{-1} s' \,:\, s' \in \cS^{\rm str}(r) \big\}
\text{.}
\end{gather*}
Since vectors in $\cS(r)$ are primitive by definition, we have $l_s = l$ for all $l \in \cS$.

Suppose that $\phi[s] \in \rmJ_{k, L[s] / l_s^2} \big|_{k, L[s] / l_s^2}\, U_{l_s}$ for all $s \in \cS$.  We have to show that $\phi \in \rmJ_{k, L'} \big|_{k, L'}\, U_{\td s}$.  This holds if and only if $\cF\cE(\phi)$ is supported on pairs $(n, r)$ for which $({\td s}^{\tr})^{-1} r$ is integral.  Recall the Fourier coefficients $c$ and ${\td c}$ in Section~\ref{sec:modularforms}.  Because $L = s^\tr (L' s)$ and hence ${\td c}(\Delta, r) = {\td c}(\Delta, r + s^\tr (L' s \lambda))$ for $\lambda \in \ZZ^N$ and also because of~(\ref{eq:higherjacobiforms_fourier_relation2}), it suffices to show this for a system of representatives of $\big( \ZZ^N \slashdiv L \ZZ^N \big) \slashdiv \{ \pm 1 \}$. By definition $\cR$ is such a system of representatives.  Henceforth we assume that $r \in \cR$.

We prove the claim by induction on $n$.  Suppose that all $(n', r)$ with $n' < n$ and $c(n', r) \ne 0$ satisfy $({\td s}^{-1})^\tr r \in \ZZ^N$.  Fix $r$ with $c(n, r) \ne 0$ and set ${\td r} = s^\tr r$.  The Fourier coefficient of index $(n, {\td r})$ of $\phi[s]$ equals
\begin{gather*}
  \sum_{r' \in \ZZ^N \,:\, s^\tr r' = {\td r}} c(n, r')
\text{.}
\end{gather*}
Using the induction hypothesis, (\ref{eq:higherjacobiforms_fourier_relation1}), and~(\ref{eq:higherjacobiforms_fourier_relation2}), we can eliminate all contributions of $c(n', r')$, $n' < n$, $r' \in \cR$.  We are then left with the expression
\begin{gather*}
  \sum_{\substack{r'' \in \cR \\ r' \in {\rm neigh}(r'') \,\cup\, {\rm neigh}(-r'') \\ s^\tr r' = {\td r}}} \hspace{-2em}
  c(n, r)
\text{.}
\end{gather*}
The sum only runs over $r' = r$, as can be seen directly from the definition of $\cS^{\rm str}(r)$, and hence it is non-zero.  By assumption, we have $l_s \nisdiv {\td r}$.  We conclude that $l \isdiv s^\tr r = l s^{\prime \tr} ({\td s}^{-1})^\tr r$.  Consequently $s^{\prime \tr} ({\td s}^{-1})^\tr r \in \ZZ$ for all $s' \in \cS^{\rm str}(r)$.  Since $\cS^{\rm str}(r)$ spans $\ZZ^N$, we deduce that all entries of $({\td s}^{-1})^\tr r$ are integral.  This proves the theorem.
\end{proof}

We formulate an algorithm that makes use of Theorem~\ref{thm:oldforms_by_restriction} in order to restrict the necessary computation to newspaces.
\begin{algorithm}
\label{alg:jacobiforms_newforms}
Fix $k \in \ZZ$, $L$ as above, and $0 < B \in \ZZ$.  The following algorithm computes $\cF\cE_{L, B}\big( \rmJ_{k, L}^{U \rm new} \big)$.
\begin{enumerate}[1:]
\item Compute a set $\cR$ of representatives r of $\big( \ZZ^N \slashdiv L \ZZ^N \big) \slashdiv \{ \pm 1 \}$ with minimal norm~$L[r]$ in $r + L \ZZ^N$.
\vspace{0.5ex}
\item Compute a minimal complete set $\cS$ of restriction vectors for $\cR$.  Set $\cS \leftarrow \cS \cup \cS' \cup \cS''$, where $\cS'$ and $\cS''$ are the sets given in Proposition~\ref{prop:sublattice_by_restriction} and Theorem~\ref{thm:oldforms_by_restriction}.
\vspace{0.5ex}
\item \label{it:alg:newforms:adjust_precision}
  Set $B \leftarrow \max\big\{B,\, \max_s \{\dim(M_{k + L[s]},\, \dim \rmM_{k - 1 + L[s]}\} \big\}$.
\vspace{0.5ex}
\item \label{it:alg:newforms:compute_scalar_jacobi_forms}
 For each $s \in \cS$, compute $\cJ^{U \rm old}_{k, L[s], B} := \cF\cE_{L[s], B} \big( \rmJ^{U \rm old}_{k, L[s]} \big) \cap \QQ^{\cI(L[s], B)}$ and $\cJ^{U \rm new}_{k, L[s], B} := \cF\cE_{L[s], B} \big( \rmJ^{U \rm new}_{k, L[s]} \big) \cap \QQ^{\cI(L[s], B)}$.
\vspace{0.5ex}
\item \label{it:alg:newforms:compute_old_dimensions}
  Compute $\dim \rmJ_{k, L}^{U \rm new} = \dim \rmJ_{k, L} - \dim \rmJ_{k, L}^{U \rm old}$ by recursively applying this algorithm.
\vspace{0.5ex}
\item Compute $\cJ_{k, L, B} := \big\{ (v[s])_{s \in \cS} \,:\, v \in \QQ^{\cI(L, B)} \big\} \subseteq \bigoplus_{s \in \cS} \QQ^{\cI(L[s], B)}$.
\vspace{0.5ex}
\item \label{it:alg:newforms:check}
      Set
      \begin{gather*}
        \cJ^{U \rm new}_{k, L}
      \longleftarrow
        \cJ_{k, L}
      \cap
        \bigoplus_{(l_s)_s} 
        \Big(
          \bigoplus_{s \in \cS'} \cJ^{U \rm new}_{k, L[s] / l_s^2, B} \big|\, U_{l_s}
        \Big)
      \text{,}
      \end{gather*}
      where the outer sum runs over tuples $(l_s)_{s \in \cS}$ of positive integers which satisfy $\gcd((l_s)_{s \in \cS}) = 1$ and $l_s^2 \isdiv L[s]$ for all $s \in \cS$.  If $\dim\, \cJ^{U \rm new}_{k, L} \ne \dim \rmJ_{k, L}^{U \rm new}$, enlarge $\cS$ by the shortest possible vector with respect to $q_\cL$.  Continue with Step~\ref{it:alg:newforms:adjust_precision}.
\vspace{0.5ex}
\item Compute the inverse image of $\cJ^{\rm restr}_{k, L}$ under $v \mapsto (v[s])_{s \in \cS}$.  It equals $\cF\cE_B\big(\rmJ^{U \rm new}_{k, L}\big)$.  Truncate it to the required precision.
\end{enumerate}
\end{algorithm}
\begin{proof}[Proof of correctness and termination]
Correctness of Algorithm~\ref{alg:jacobiforms_newforms} follows from Theorem~\ref{thm:oldforms_by_restriction}.  Because we have
\begin{gather*}
  \cF\cE \big( \rmJ_{k, L}^{U \rm old} \big)
  \oplus
  \bigoplus_{(l_s)_{s \in \cS}} 
  \Big(
    \bigoplus_{s \in \cS'} \cJ^{U \rm new}_{k, L[s] / l_s^2, B} \big|\, U_{l_s}
  \Big)
=
  \bigoplus_{s \in \cS} \cJ_{k, L[s], B}
\text{,}
\end{gather*}
Algorithm~\ref{alg:jacobiforms_newforms} terminates.
\end{proof}

We finish this section with some comments on Algorithm~\ref{alg:jacobiforms_newforms}.  First of all, it allows for a significant reduction of memory consumption, if the $\cJ_{k, L[s], B}$ is large, because it allows us to split these spaces.  In contrast to algorithms for elliptic modular forms that make use of the newform decomposition, it does not necessarily reduce the time consumption for the following reason.  While in the case of elliptic modular forms one can show that the corresponding decomposition into newspaces is a direct sum, this is not the case for the decomposition of $\rmJ_{k, L}$ arising from the $U_s$.  As we have seen, not even formula (\ref{eq:newform_decomposition}) corresponds to such a direct sum decomposition if $N \ne 1$.  If an analog decomposition could be found, we would be able to compute the corresponding dimensions of newspaces recursively.  Thus we would not have to compute oldspaces in the course of Algorithm~\ref{alg:jacobiforms_newforms}, saving, indeed, much time.


\section{Implementing Algorithm~\ref{alg:jacobiforms}}
\label{sec:implementation}

This section contains instructions on how to use the implementation of Algorithm~\ref{alg:jacobiforms} that is provided by the author.  We will also discuss the algorithm's and implementation's bottleneck.

At the time this paper was written, the implementation of Jacobi forms was not yet integrated in Sage.  Pull the ticket \#16448 for the Sage development server.
\begin{lstlisting}[basicstyle=\ttfamily\small]
sage -dev checkout --ticket 16448
\end{lstlisting}

Start Sage, either in the terminal or as a notebook, and type:
\begin{lstlisting}[basicstyle=\ttfamily\small]
from sage.modular.jacobi.all import *
k = 9
L = QuadraticForm(matrix(2, [2,1,1,2]))
B = 10
jforms = higherrank_jacobi_forms(k, L, B)
\end{lstlisting}
This computes the module of Jacobi forms of weight $k = 10$ and index $L = \left(\begin{smallmatrix}2 & 1 \\ 1 & 2 \end{smallmatrix}\right)$, where the underlying Fourier expansions $\sum_{n, r} c(n, r)\, q^n \zeta^r$ are truncated at $n < B = 10$.  The return value is a list of dictionaries, which represent Fourier expansions of Jacobi forms.  In order to access the Fourier coefficient at $n = 2$ and $r = (1, 1)$ of the first basis element ($\rmJ_{k,L}$ has dimension~$2$), type:
\begin{lstlisting}[basicstyle=\ttfamily\small]
n = 2; r = (1, 1)
jforms[0][(n,r)]
\end{lstlisting}

This works, since $r = (1,1)$ is a reduced vector.  For general~$r$, we have to invoke index reduction, to find a corresponding index of the dictionary.
\begin{lstlisting}[basicstyle=\ttfamily\small]
n = 10; r = (3, -2)
L_adj = QuadraticForm(2 * L.matrix().adjoint())
(r_classes, _) = higherrank_jacobi_r_classes(L)
L_span = L.matrix().row_module()
((nred, rred), s) = higherrank_jacobi_reduce_fe_index((n,r),
                                    L, r_classes, L_adj, L_span)
s**k * jforms[0][(nred, rred)]
\end{lstlisting}
It is vital to multiply the Fourier coefficient at the reduced index by the $k$-th power of the sign~$s$ that is returned by the index reduction function.  This sign stems from relation~\eqref{eq:higherjacobiforms_fourier_relation2}.

Our major goal is to compute Fourier expansions of vector valued modular forms.  Such functionality is provided via Jacobi forms and the theta decomposition, but there is a separate interface.  Given a Jacobi form as above, we find the associated vector valued modular form as follows:
\begin{lstlisting}[basicstyle=\ttfamily\small]
theta_decomposition(jforms[0], L, r_classes)
\end{lstlisting}
The result is a dictionary of dictionaries, that represents the Fourier expansion of a vector valued modular form.  The above call yields
\begin{lstlisting}[basicstyle=\ttfamily\small]
{(0, 0):
 {0: 0, 1: 0, 2: 0, 3: 0, 4: 0, 5: 0, 6: 0, 7: 0, 8: 0, 9: 0},
 (0, 1):
 { 2/3: -1,     5/3: 16,    8/3: -104,  11/3: 320, 14/3: -260,
  17/3: -1248, 20/3: 3712, 23/3: -1664, 26/3: -6890},
 (0, 2):
  { 2/3:  1,     5/3: -16,    8/3: 104,  11/3: -320, 14/3: 260,
   17/3: 1248, 20/3: -3712, 23/3: 1664, 26/3: 6890}}
\end{lstlisting}
The outer keys are tuples of integers, which yield representatives of the discriminant form $\ZZ^N \slashdiv L \ZZ^N$ associated with~$L$.  In the present case, they correspond to components of a vector valued modular form with respect to the basis $\frake_{(0,0)}$, $\frake_{(1,0)}$, $\frake_{(2,0)}$ of $\CC\big[ \ZZ^N \slashdiv \left(\begin{smallmatrix} 2 & 1 \\ 1 & 2 \end{smallmatrix}\right) \ZZ^N  \big]$.  The inner keys are rationals, which correspond to exponents of~$q$.  Summarizing, the above Python dictionary corresponds to the following $q$\nbd expansion:
\begin{multline*}
  \big( \frake_{(0,2)} - \frake_{(0,1)} \big)
  \big(
  q^{\frac{2}{3}}
  - 16 q^{\frac{5}{3}}
  + 104 q^{\frac{8}{3}}
  - 320 q^{\frac{11}{3}}
  + 260 q^{\frac{14}{3}}
  + 1248 q^{\frac{17}{3}}
\\
  - 3712 q^{\frac{20}{3}}
  + 1664 q^{\frac{23}{3}}
  + 6890 q^{\frac{26}{3}}
  \big)
\text{.}
\end{multline*}

The theta decomposition is used throughout the implementation to compute vector valued modular forms for the conjugate~$\ov{\rho}_L$ of the Weil representation attached to~$L$.  This method of computation requires that $L$ is positive definite.  The next lines of code compute a lattice that is stably equivalent to~$(-2)$.
\begin{lstlisting}[basicstyle=\ttfamily\small]
L = QuadraticForm(diagonal_matrix([-2]))
Lpos = positive_definite_quadratic_form(L)
\end{lstlisting}
Given this lattice, we can compute vector valued modular forms as follows.
\begin{lstlisting}[basicstyle=\ttfamily\small]
k = 5/2
B = 5
vector_valued_modular_forms(k, Lpos, B)
\end{lstlisting}

\subsection{Bottlenecks, parallelisation, and extensions}

There are two major bottlenecks of our implementation that can both be mitigated by parallelisation.  When computing Jacobi forms for large lattices of small discriminant the most time consuming step is to enumerate short vectors in the lattice.  Sage calls Pari~\cite{pari} via pexpect, and then sorts the corresponding vectors by length.  Obviously, this is very slow.  It also provides unnecessary data, the vectors' lengths, in some cases.  A more efficient implementation of Algorithm~\ref{alg:jacobiforms} should be based on optimized algorithms for this task.  Such implementation are provided in recent versions of fpLLL~\cite{fpLLL} and at~\cite{sageticket-enumeration}, but they have not yet been wrapped in Sage.  It is also possible to parallelize enumeration of short vectors~\cite{DS10}.

When computing Jacobi forms for lattices of large discriminant up to high precisions, linear algebra can become a bottleneck.  Optimization and parallelization of this step is of general interest, but it is also possible to split up the original problem by using newform theories as indicated in Section~\ref{sec:newforms_algorithm}.  We have already made clear, however, that currently there no completely satisfactory theory of this kind.  In particular, it would be important to have explicit dimension formulas for spaces of newforms.

The provided implementation of Algorithm~\ref{alg:jacobiforms} does not expose one important feature of our algorithm.  Since restricting formal Foourier expansions of Jacobi indices $L$ to $L[s]$ for some $s \in \ZZ^N$ is independent of the weight modulo~$2$, it suffices to calculate the corresponding matrix once.  Note that the dependence modulo~$2$ can not be eliminated if only reduced indices are considered.  A cached matrix can be reused for all weights, as long as the set of restriction vectors~$\cS$ does not need to be enlarged and the precision $B$ does not need to be increased.  For an example on how to cache these matrices, the reader is referred to the files at~\cite{raumhomepage} which are related to Section~\ref{sec:data}.

Finally, we consider restrictions to Jacobi forms of higher rank index.  It is not necessary to restrict the Jacobi forms that we wish to compute to $s z$, $z \in \CC$ for vectors $s \in \ZZ^N$.  One can equally well restrict them to $s z$, $z \in \CC^{N'}$ for any sufficiently large set of matrices $s \in \Mat{N', N}{\ZZ}$, $N' < N$.  This can be seen directly by applying Lemma~\ref{la:vanishing_s_of_higher_jacobi_forms} to the obvious generalization of Proposition~\ref{prop:modularity_s_of_higher_jacobi_forms}.  In order to compute Fourier expansions of Jacobi forms of index $L[s]$, $s \in \Mat{N', N}{\ZZ}$ one has to, eventually, restrict to scalar indices.  However, it is possible to break up the problem into smaller pieces, which can be treated in parallel.  Also, since restriction to Jacobi forms of higher index pertains more information, it is conceivable that the working precision $B$ can be decrease.  For lattices of large size this results in a significant reduction of the number of short vectors that have to be enumerated.

The previous observation gains even more importance in the light of the following speculation.  In some cases there is a geometric interpretation of embeddings $\cL' \hookrightarrow \cL$.  Such an interpretation possibly allows for computing the restriction map on Fourier expansions more efficiently than by bare enumeration of vectors in $\cL$.  This kind of tricks could be helpful when computing Jacobi forms for, say, rescaled Leech lattices.

\subsection{Input data to~\cite{GKR11}}
\label{ssec:input_data}

In~\cite{GKR11}, we used the presented implementation to compute Fourier expansions of weakly holomorphic vector valued modular forms that serve as input data to the multiplicative Borcherds lift.  Input data to Borcherds products for the orthogonal group $\cL = \cU \oplus \cU \oplus \cL_0(-1)$, where $\cU$ denotes the hyperbolic plane, corresponds to Jacobi forms of positive index~$\cL_0$.  This is a relatively easy application of the presented algorithm, since $\cL_0$ is positive definite, and so we do not need to apply Algorithm~\ref{alg:positive_discriminant_forms}.  Because weakly holomorphic vector valued modular forms are connected to vector valued modular forms via multiplication of sufficiently high powers of the modular form~$\Delta$,  one can compute them without major difficulty.  We provide facilities to compute weakly holomorphic modular forms of type $\rho_{\cL_0(-1)}$.  Such forms are uniquely defined by their principal part, if the weight is not positive.
\begin{gather*}
  \sum_{\substack{\mu \in \disc\, \cL_0 \\ \QQ \ni m < 0}} a(m, \mu) \, q^m \, \frake_\mu
\text{.}
\end{gather*}

We illustrate how to use our implementation.
\begin{lstlisting}[basicstyle=\ttfamily\small]
from sage.modular.jacobi.all import *
k = -1
L = QuadraticForm(matrix(2, [2,1,1,2]))
order = 1
B = 5
vector_valued_modular_forms_weakly_holomorphic(k, L, order, B)
\end{lstlisting}
In case, we wish to find a weakly holomorphic vector valued modular form with prescribed principal part, we can do so as follows.
\begin{lstlisting}[basicstyle=\ttfamily\small]
pp = {(0,0): {-1: 1}}
vector_valued_modular_forms_weakly_holomorphic_\\
                            with_principal_part(k, L, pp, B)
\end{lstlisting}
The double slash indicates line continuation.  The principal part of a vector valued modular form is its Fourier expansion truncated at~$q^0$.  In particular, the above principal part is~$q^{-1} \frake_{(0,0)}$.   The function will fail, raising ValueError, if the desired vector valued modular form does not exist.


\section{Vector valued Jacobi forms for lattices of rank~$1$ and~$2$}
\label{sec:data}

Vector valued elliptic modular forms associated to cyclic discriminant groups are of particular interest, since they are the most basic case.  Among the cyclic ones, the discriminant groups $D_m := \big(\ZZ \slashdiv 2m \ZZ, v \mapsto v^2 \slashdiv 2m\big)$, $m > 0$ are the simplest.  They come from positive definite lattices $(2 m)$.  Consequently, the corresponding vector valued modular forms of type~${\check \rho}_D$ can be obtained as components of the theta decomposition of holomorphic Jacobi forms of index~$m$.  An algorithm to compute their Fourier expansions has been known for a long time.  A first version was given in~\cite{EZ85}, and a much more refined version, which is also used in Sage~\cite{sageticket-jacobi}, was presented in~\cite{Sk84}.

From a structural point of view, the discriminant forms $D_{m} := \big(\ZZ \slashdiv 2m \ZZ, v \mapsto v^2 \slashdiv 2m\big)$, $m < 0$ are equally simple, but it is harder to compute Fourier coefficients of associated vector valued modular forms.  One can employ a theta decomposition to relate such vector valued modular forms to skew-holomorphic Jacobi forms as defined in~\cite{Sk90}.  Apart from Eisenstein series and theta series, no construction of skew-holomorophic forms is known.  The major complication when dealing with skew-holomorphic Jacobi forms is that the product of two of them is no longer a skew-holomorphic Jacobi form.  Thus the Fourier expansions of skew-holomorphic Jacoobi forms are much harder to compute than those of holomorphic Jacobi forms.  Algorithm~\ref{alg:positive_discriminant_forms} and~\ref{alg:jacobiforms} provide us with a relatively easy way to compute the Fourier coefficients of vector valued modular forms of type~${\check \rho}_{D_m}$.  The Jacobi indices that arise during these computations have size $7 \times 7$.  As explained in Section~\ref{sec:algorithm}, enumeration of short vectors in such lattices is time consuming and Sage does not come with an optimized implementation.  For this reason we have precomputed several cases that are possibly interesting for further applications.

Using hardware provided by the Max Planck Institute in Bonn, we have computed Fourier expansions up to precision~$40$ of vector valued modular forms of type~${\check \rho}_{D_m}$ for all $0 > m \ge -20$.  A short excerpt is shown in Table~\ref{tab:fourier_expansions_skew_m1} and~\ref{tab:fourier_expansions_skew_m5}.  The complete data is available at~\cite{raumhomepage}.  We illustrate how to access and use it.  Recall that the elements of $M_k({\check \rho}_D)$ have a Fourier expansion of the form
\begin{gather*}
  f (\tau) = \sum_{\substack{\mu \in D \\ 0 \le n \in \QQ}} a_f(m, \mu) \, q^m \,\frake_\mu
\text{.}
\end{gather*}
The following code only requires Sage~\cite{sage} to be installed.  We consider the case $m = 2$, $k = \frac{5}{2}$, and we want to know the value of $a_f(0, 4)$, where $f$ is the basis element of $\rmM_{\frac{5}{2}}({\check \rho}_{D_{-2}})$ that we have computed.  We simply type
\begin{lstlisting}[basicstyle=\ttfamily\small]
fs = load(vector_valued_fourier_expansion__2k_5__m_-2__.sobj')
(fs[0])[0][4]
\end{lstlisting}
This gives us~$-312$.  The code does the following: We load a list \verb!fs! of basis elements for the space $M_{\frac{5}{2}}\big( {\check \rho}_{D_{-2}} \big)$.  In our case this list has length~$1$.  The elements of this list are dictionaries whose keys correspond to elements in $\ZZ \slashdiv 2 |m| \ZZ$, which label the components of modular forms of type~${\check \rho}_{D_{-m}}$.  They are integers $0 \le r \le |m|$.  Note that the Fourier expansion of any element of $M_k\big( {\check \rho}_{D_{m}} \big)$ is supported on $\frake_{\mu} + \frake_{-\mu}$, $\mu \in D$ in the notation of Section~\ref{sec:modularforms}.  Hence the available values suffice to reconstruct all Fourier coefficients $a_f(\mu, n)$ for all $\mu$.

We have also precomputed Fourier expansion of vector valued modular forms for negative definite lattices~$\cL$ of rank~$2$:
\begin{gather*}
  \begin{pmatrix} 2 & 1 \\ 1 & 2 \end{pmatrix}
\text{,}\quad
  \begin{pmatrix} 2 & 0 \\ 0 & 2 \end{pmatrix}
\text{,}\quad
  \begin{pmatrix} 2 & 1 \\ 1 & 4 \end{pmatrix}
\text{,}\quad
  \begin{pmatrix} 2 & 0 \\ 0 & 4 \end{pmatrix}
\text{.}
\end{gather*}
These expansions are related to~\cite{GKR11}, where they serve as input data to an algorithm computing Borcherds products.  The corresponding weakly holomorphic vector valued modular forms can be accessed as was described in Section~\ref{ssec:input_data}.  For the given indices all elements of $\rmM_{-1}(\rho_{\cL(-1)})$ with pole order at infinity less than or equal to~$5$ are provided.

\begin{table}[h]
\begin{tabular}{crrrrrr}
\toprule
$\mu$ & \multicolumn{6}{c}{$m$} \\
\cmidrule{2-7}
$0$ & 
  $0$ & $1$ & $2$ & $3$ & $4$ & $5$ \\[4pt]
$1$ &
  $\frac{1}{4}$ & $\frac{5}{4}$ & $\frac{9}{4}$ & $\frac{13}{4}$ &
  $\frac{17}{4}$ & $\frac{21}{4}$ \\[6pt]
\midrule
$a_{\frac{5}{2}}(m, \mu)$ &
  $1$ & $-70$ & $-120$ & $-240$ & $-550$ & $-528$ \\
                         &
  $-10$ & $-48$ & $-250$ & $-240$ & $-480$ & $-480$ \\[6pt]
$a_{\frac{9}{2}}(m, \mu)$ &
  $1$ & $242$ & $2640$ & $11040$ & $30962$ & $65760$ \\
                         &
  $2$ & $480$ & $4322$ & $13920$ & $39360$ & $73920$ \\[6pt]
$a^{(1)}_{\frac{13}{2}}(m, \mu)$ &
$28$ & $0$ & $-433680$ & $-4736160$ & $-21626280$ & $-74216064$ \\
                         &
  $-195$ & $-58344$ & $-933595$ & $-6583560$ & $-30703920$ & $-96379920$ \\[6pt]
$a^{(2)}_{\frac{13}{2}}(m, \mu)$ &
  $0$ & $56$ & $240$ & $-1440$ & $704$ & $-960$ \\
                                &
  $-1$ & $-120$ & $-9$ & $1320$ & $240$ & $-5040$ \\
\bottomrule \\
\end{tabular}
\caption{Fourier coefficients $a_k(m, \mu)$ of basis elements of the spaces of vector valued elliptic modular forms of weight~$k$ and type ${\check \rho}_{(-2)}$.}
\label{tab:fourier_expansions_skew_m1}
\end{table}

\begin{table}
\begin{tabular}{crrrrrrrr}
\toprule
$\mu$ & \multicolumn{8}{c}{$m$} \\
\cmidrule{2-9}
$0$ & 
  $0$ & $1$ & $2$ & $3$ & $4$ & $5$ & $6$ & $7$ \\[4pt]
$1$ &
  $\frac{1}{20}$ & $\frac{21}{20}$ & $\frac{41}{20}$ & $\frac{61}{20}$ &
  $\frac{81}{20}$ & $\frac{101}{20}$ & $\frac{121}{20}$ & $\frac{141}{20}$ \\[4pt]
$2$ &
  $\frac{1}{5}$ & $\frac{6}{5}$ & $\frac{11}{5}$ & $\frac{16}{5}$ &
  $\frac{21}{5}$ & $\frac{26}{5}$ & $\frac{31}{5}$ & $\frac{36}{5}$ \\[4pt]
$3$ &
  $\frac{9}{20}$ & $\frac{29}{20}$ & $\frac{49}{20}$ & $\frac{69}{20}$ &
  $\frac{89}{20}$ & $\frac{109}{20}$ & $\frac{129}{20}$ & $\frac{149}{20}$ \\[4pt]
$4$ &
  $\frac{4}{5}$ & $\frac{9}{5}$ & $\frac{14}{5}$ & $\frac{19}{5}$ &
  $\frac{24}{5}$ & $\frac{29}{5}$ & $\frac{34}{5}$ & $\frac{39}{5}$ \\[4pt]
$5$ &
  $\frac{1}{4}$ & $\frac{5}{4}$ & $\frac{9}{4}$ & $\frac{13}{4}$ &
  $\frac{17}{4}$ & $\frac{21}{4}$ & $\frac{25}{4}$ & $\frac{29}{4}$ \\[6pt]
\midrule
$a^{(1)}_{\frac{5}{2}}(m, \mu)$ &
  $5$ & $0$ & $-120$ & $-960$ & $-840$ & $-2270$ & $-960$ & $-2160$ \\[4pt]
              &
  $-7$ & $60$ & $-420$ & $-660$ & $-1147$ & $-1080$ & $-2647$ & $-2220$ \\[4pt]
              &
  $17$ & $-240$ & $-120$ & $-763$ & $-1320$ & $-1560$ & $-1440$ & $-2563$ \\[4pt]
              &
  $-43$ & $-240$ & $-643$ & $-300$ & $-1200$ & $-1500$ & $-2460$ & $-1260$ \\[4pt]
              &
  $-187$ & $-367$ & $-360$ & $-1080$ & $-840$ & $-1320$ & $-1920$ & $-2400$ \\[4pt]
              &
  $-60$ & $-230$ & $-540$ & $-840$ & $-120$ & $-1560$ & $-1140$ & $-3360$ \\[6pt]
$a^{(2)}_{\frac{5}{2}}(m, \mu)$ &
  $0$ & $20$ & $40$ & $-80$ & $0$ & $-60$ & $120$ & $-80$ \\[4pt]
              &
  $-1$ & $30$ & $-10$ & $-30$ & $29$ & $-40$ & $-21$ & $-110$ \\[4pt]
              &
  $6$ & $-20$ & $40$ & $16$ & $-60$ & $-80$ & $80$ & $16$ \\[4pt]
              &
  $1$ & $-20$ & $1$ & $50$ & $0$ & $-50$ & $-30$ & $70$ \\[4pt]
              &
  $-16$ & $-6$ & $20$ & $-40$ & $80$ & $40$ & $40$ & $0$ \\[4pt]
              &
  $-10$ & $10$ & $-50$ & $-20$ & $140$ & $20$ & $50$ & $-80$ \\
\bottomrule \\
\end{tabular}
\caption{Fourier coefficients $a_k(m, \mu)$ of basis elements of the spaces of vector valued elliptic modular forms of weight~$k$ and type ${\check \rho}_{(-10)}$.}
\label{tab:fourier_expansions_skew_m5}
\end{table}


\section{Special divisors on modular varieties of othogonal type}
\label{sec:specialdivisors}

The data provided in the previous section can be used to deduce linear equivalences of special divisors on modular varieties of othogonal type.  Let $\cL$ be a lattice of signature $(2, n)$, $n \ge 2$.  The orthogonal group~$\Orth{}(\cL)$ of $\cL$ consists of all linear transformation of $\cL \otimes \QQ$ which preserve $q_\cL$ and $\cL$.  The discriminant kernel $\Orth{}(\cL)[\disc\,\cL] \subseteq \Orth{}(\cL)$ is the kernel of the action of $\Orth{}(\cL)$ on $\disc\, \cL = \cL^\# \slashdiv \cL$.  A modular variety of orthogonal type is of the form $\Orth{}(\cL)[\disc\, \cL] \backslash \Orth{2, n} \slashdiv \Orth{2} \times \Orth{n}$.  We will give a model for this variety that allows us to easily define special divisors.  The reader is referred to~\cite{Br02} for details on the construction that we explain. Set
\begin{gather*}
  \cD_e
=
  \big\{ z \in \cL \otimes \CC \,:\, q_{\cL}(z) = 0,\, \langle z, \ov{z} \rangle_{\cL} > 0 \big\}
\text{,}
\end{gather*}
where $\ov{z}$ means the complex conjugate in the second tensor component.  It is well-known that $\cD_e$ has two connected components.  Choose one and denote it by $\cD_e^+$.  The projectivisation $\cD = \cD_e^+ \slashdiv ( \CC \setminus \{0\} )$ is a homogenous space for $\Orth{2, n}$.  We write $X_\cL$ for  $\Orth{}(\cL)[\disc\, \cL] \backslash \cD$.  The linear extension of $\langle \,\cdot, \cdot\,\rangle_\cL$ to $\cL \otimes \CC$ allows us to define divisors in $\cD_e$ and $\cD$ as follows.  Given $\lambda \in \cL^\#$, set
\begin{gather*}
  Z(\lambda)
=
  \{ z \in \cD \,:\, \langle z, \lambda \rangle_{\cL} = 0 \}
\text{.}
\end{gather*}
This is an analytic divisor on $\cD$.  For $\mu \in \disc\, \cL$ and $0 < m \in \QQ$, the special divisor of discriminant $(m, \mu)$ is defined as
\begin{gather}
  Z(m, \mu)
=
  \sum_{\substack{ \lambda \in \cL + \mu \\ q_{\cL}(\lambda) = -m}} Z(\lambda)
\text{.}
\end{gather}
Write $\ZZ\big[ Z(m, \mu) \big]$ for the free module with basis $Z(m, \mu)$, $0 < m \in \QQ$$, \mu \in \disc\, \cL$.  The special Picard group ${\rm Pic}_{\rm sp}(X_\cL)$ of $X_\cL$ is
\begin{gather*}
  \ZZ\big[ Z(m, \mu) \big]
  \slashdiv
  \big\{ {\rm div}(f) \,:\, f \text{ meromorphic on $X_\cL$}, {\rm div}(f) \subset \ZZ\big[ Z(m, \mu) \big] \big\}
\text{.}
\end{gather*}

Assume that $\cL = \cU \oplus \cU \oplus \cL'(-1)$, where $\cU$ is the hyperbolic plane.  In this case its is known by~\cite{Br02, BF10} that linear equivalences of special divisors on~$X_\cL$ are given by relations of coefficients of vector valued modular forms.  Linear equivalences of special divisors are the same as relations in~${\rm Pic}_{\rm sp}(X_\cL)$.  Specifically, let $(f_i)_{i}$ be a basis of $M_{\frac{2 + n}{2}}({\check \rho}_{\cL})$ with Fourier expansion
\begin{gather*}
  f_i(\tau) = \sum_{\mu \in \disc\, \cL} \sum_{m \in \QQ} a_i(m, \mu) \, \frake_\mu
\text{.}
\end{gather*}
Given a finite sum $Z = \sum_{\mu \in \disc\, \cL} \sum_{0 < m \in \QQ} b(m, \mu) Z(-m, \mu)$, we have $0 \eqPic Z$ in ${\rm Pic}_{\rm sp}(X_\cL)$ if and only if
\begin{gather}
\label{eq:special_divisor_relation}
  \sum_{\substack{\mu \in \disc\, \cL' \\ 0 < m \in \QQ}}
  a_i(m, \mu)\, b(m, \mu)
=
  0
\end{gather}
for all $1 \le i \le \dim(M_{\frac{2 + n}{2}}({\check \rho}_{\cL}))$.

In the case $\cL' = (m)$, we say that $X_\cL$ is a paramodular variety of genus~$2$ and level~$m$.  The computations in Section~\ref{sec:data} can be used to get precise relations of special divisors on these $X_\cL$.
\begin{corollary}
\label{cor:linear_equivalence_m5}
The rational Picard group of degree~$0$ divisors on the toroidal compactification of~$X_{\cU \oplus \cU \oplus (-10)}$ has rank~$1$, and is generated by $Z(1,0)$.  The following relations generate all relations that hold for Heegner divisors $Z(m, \mu)$ with $m < 2$.
\begin{alignat*}{3}
  Z(\tfrac{1}{4}, 5) &= \frac{-1}{2} Z(1,0)
\text{,}
\quad&
  Z(\tfrac{4}{5}, 4) &= \frac{-4}{5} Z(1,0)
\text{,}
\quad&
  Z(\tfrac{9}{20}, 3) &= \frac{1}{20} Z(1,0)
\text{,}
\\
  Z(\tfrac{1}{5}, 2) &= \frac{3}{10} Z(1,0)
\text{,}
\quad&
  Z(\tfrac{1}{20}, 1) &= \frac{-1}{20} Z(1,0)
\text{.}
\end{alignat*}
\end{corollary}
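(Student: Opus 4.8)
The plan is to deduce the corollary from the translation of linear equivalence of special divisors into pairings against vector valued modular forms, applied to $\cL = \cU \oplus \cU \oplus (-10)$. First I would fix the numerics: $\cL$ has signature $(2,3)$, so $n = 3$, the relevant weight is $\tfrac{2+n}{2} = \tfrac{5}{2}$, and $\check\rho_\cL = \rho_{\cL(-1)}$ is the dual Weil representation of the discriminant form of $(-10)$, i.e. precisely the representation $\check\rho_{(-10)}$ appearing in Table~\ref{tab:fourier_expansions_skew_m5}. By the discussion preceding the corollary (following \cite{Br02, BF10}), a finite $\QQ$-combination $\sum b(m,\mu) Z(m,\mu)$ of special divisors is trivial in ${\rm Pic}_{\rm sp}(X_\cL) \otimes \QQ$ exactly when it pairs to zero, via \eqref{eq:special_divisor_relation}, against every element of a fixed basis of $\rmM_{5/2}(\check\rho_\cL)$; on the toroidal compactification and on its degree-$0$ part the pairing against the Eisenstein series becomes the degree functional (carried by the boundary divisors), so that the relations among the interior special divisors are governed by the cusp space $\rmS_{5/2}(\check\rho_\cL)$ alone.

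Hence the first real step is to produce a basis of $\rmM_{5/2}(\check\rho_\cL)$, its dimension, and the splitting into Eisenstein and cuspidal parts. Since $(-10)$ is negative definite, I would apply Algorithm~\ref{alg:positive_discriminant_forms} to obtain a positive definite lattice $\cL''$ of rank $7$ with $\disc\,\cL'' \cong \disc\,\cL$, so that $\check\rho_{\cL''} \cong \check\rho_\cL$. Running Algorithm~\ref{alg:jacobiforms} on $\rmJ_{6,\cL''}$ (the weight being $\tfrac{5}{2} + \tfrac{7}{2} = 6$) and applying the theta decomposition of Theorem~\ref{thm:thetadecomposition} produces the Fourier expansions tabulated as the two families of coefficients $a^{(1)}_{5/2}(m,\mu)$ and $a^{(2)}_{5/2}(m,\mu)$ in Table~\ref{tab:fourier_expansions_skew_m5}; termination of Algorithm~\ref{alg:jacobiforms} together with the dimension formula of Theorem~\ref{thm:dimension_for_jacobi_forms} certifies $\dim \rmM_{5/2}(\check\rho_\cL) = 2$. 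Since $a^{(1)}_{5/2}(0,0) = 5 \neq 0$ while $a^{(2)}_{5/2}$ has no nonzero $q^0$-coefficient (the only admissible one sits in the isotropic component $\mu = 0$), the form $a^{(2)}_{5/2}$ spans the one-dimensional cusp space $\rmS_{5/2}(\check\rho_\cL)$.

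It then remains to read the statements off Table~\ref{tab:fourier_expansions_skew_m5}. The degree-$0$ rational Picard group has rank $\dim \rmS_{5/2}(\check\rho_\cL) = 1$, and since $a^{(2)}_{5/2}(1,0) = 20 \neq 0$ the class of $Z(1,0)$ in it is nonzero, hence generates it. For each $\mu \in \{1,2,3,4,5\}$ let $m_\mu$ be the minimal positive discriminant occurring with $\mu$ (the one in the $\mu$-row of Table~\ref{tab:fourier_expansions_skew_m5}); setting $c_\mu := a^{(2)}_{5/2}(m_\mu,\mu) / a^{(2)}_{5/2}(1,0)$, the combination $Z(m_\mu,\mu) - c_\mu Z(1,0)$ pairs to zero against $a^{(2)}_{5/2}$ and is therefore trivial modulo the degree, which gives $Z(m_\mu,\mu) \eqPic c_\mu Z(1,0)$; reading the table yields $c_5 = -\tfrac{1}{2}$, $c_4 = -\tfrac{4}{5}$, $c_3 = \tfrac{1}{20}$, $c_2 = \tfrac{3}{10}$, $c_1 = -\tfrac{1}{20}$, the five displayed relations. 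Finally, since this group has rank $1$ and is generated by $Z(1,0)$, every $\QQ$-relation among the Heegner divisors with $m < 2$ reduces to expressing each of them as the $\QQ$-multiple $\big(a^{(2)}_{5/2}(m,\mu)/a^{(2)}_{5/2}(1,0)\big) Z(1,0)$ of $Z(1,0)$, so the five relations above together with these rank-$1$ reductions generate all of them.

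The step I expect to be the main obstacle is not computational but the conceptual bookkeeping of the last sentence of the first paragraph: making precise why the \emph{degree-$0$} Picard group of the \emph{toroidal compactification} has rank $\dim \rmS_{5/2}(\check\rho_\cL)$ rather than $\dim \rmM_{5/2}(\check\rho_\cL)$ — that is, invoking the Borcherds/Bruinier(--Freitag) machinery correctly so that the Eisenstein series contributes only the degree (absorbed into boundary divisors) while the cusp form alone governs equivalences among the interior special divisors. Everything else reduces to the computation already underlying Table~\ref{tab:fourier_expansions_skew_m5} and to verifying the five numerical ratios.
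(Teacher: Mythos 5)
Your proposal is correct and follows essentially the same route as the paper: the rank statement is quoted from Bruinier~\cite{Br12}, and the five relations are obtained by pairing against the cusp form $a^{(2)}_{5/2}$ of Table~\ref{tab:fourier_expansions_skew_m5} via~\eqref{eq:special_divisor_relation}, with your ratios $c_\mu = a^{(2)}_{5/2}(m_\mu,\mu)/a^{(2)}_{5/2}(1,0)$ matching the stated coefficients. Your explicit remark that on the degree-$0$ part of the toroidal compactification only the cuspidal pairing matters (the Eisenstein component $a^{(1)}_{5/2}$ being absorbed into the degree and boundary) is exactly the point the paper leaves implicit by saying one must consider cusp forms, and it is needed since the listed relations do not annihilate $a^{(1)}_{5/2}$.
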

\begin{proof}
The statement on the rank of the Picard group is proved by Bruinier~\cite{Br12}.

To prove the relations, we have to consider vector valued modular cusp forms of weight $\frac{5}{2}$, whose coefficients are given in Table~\ref{tab:fourier_expansions_skew_m5}.  It is easy to check that relation~(\ref{eq:special_divisor_relation}) is satisfied in all cases, and that they span the space of all relations over~$\QQ$.
\end{proof}
More such relations can be computed using the data presented in Section~\ref{sec:data}.  The question of how to compute modular forms with divisors as in the above corollaries was considered in~\cite{GKR11}.


\addtocontents{toc}{\protect\setcounter{tocdepth}{0}}
\bibliographystyle{amsalpha}
\bibliography{bibliography}

\end{document}